\providecommand{\U}[1]{\protect\rule{.1in}{.1in}}
\newtheorem{theorem}{Theorem}[section]
\newtheorem{corollary}[theorem]{Corollary}
\newtheorem{definition}[theorem]{Definition}
\newtheorem{assumption}[theorem]{Assumption}
\newtheorem{example}[theorem]{Example}
\newtheorem{lemma}[theorem]{Lemma}
\newtheorem{proposition}[theorem]{Proposition}
\newtheorem{remark}[theorem]{Remark}
\numberwithin{equation}{section}
\newcommand{\corresponding}{\footnote{Corresponding author.}}
\title{A BSDE approach to the asymmetric risk-sensitive optimization and its applications
\thanks{All the auhtors contributed equally to this work.}
}
\author[1]{Mingshang Hu}
\author[1]{Shaolin Ji\corresponding}
\author[2]{Rundong Xu}
\author[3]{Xiaole Xue}
\affil[1]{\textit{Zhongtai Securities Institute for Financial Studies, Shandong University, Jinan 250100, China. jsl@sdu.edu.cn}}
\affil[2]{\textit{Center for Applied Mathematics, Tianjin University, Tianjin 300072, China. rundong.xu@mail.sdu.edu.cn}}
\affil[3]{\textit{School of Management, Shandong University, Jinan 250100, China. xlxue@sdu.edu.cn}}
\date{}
\begin{document}

\maketitle

%\newcommand\blfootnote[1]
%{
%\begingroup
%\renewcommand\thefootnote{}\footnote{#1}
%\addtocounter{footnote}{-1}
%\endgroup
%}
%\blfootnote{}

\textbf{Abstract}. This paper is devoted to proposing a new asymmetric
risk-sensitive criterion involving different risk attitudes toward varying
risk sources. The criterion can only be defined through the initial value of 
the minimal solutions of quadratic backward stochastic differential equations (BSDEs). 
Before uncovering the mean-variance representation for the introduced
criterion by the variational approach, some axioms are given for the first
time to characterize a variance decomposition of square integrable random variables.
The stochastic control problems under this criterion are described as 
a kind of stochastic recursive control problems that includes controlled quadratic BSDEs.
An asymmetric risk-sensitive global stochastic maximum principle is derived when the 
quadratic BSDEs are equipped with bounded data.
A closed-form solution of a stochastic linear-quadratic risk-sensitive control problem 
is obtained by introducing a novel completion-of-squares technique for controlled quadratic BSDEs.
In addition, a dynamic portfolio optimization problem featuring a stochastic return
rate is provided as an application of the asymmetric risk-sensitive control.

{\textbf{Key words}. } asymmetric risk-sensitive criterion; quadratic
backward stochastic differential equation (quadratic BSDE); 
linear-quadratic (LQ) optimal control; recursive utility

\textbf{AMS subject classifications.} 93E20, 60H10, 49K45

\addcontentsline{toc}{section}{\hspace*{1.8em}Abstract}

\section{Introduction}

In finance and economics, not all behaviors can be described by risk-neutral
cost functions. One way of capturing risk-sensitivity (including risk-seeking
and risk-averse behavior) is replacing the linear expectation with the
following nonlinear one, for a random variable $\xi$ and a constant $\theta$,
considering
\begin{equation}
\mathcal{E}_{\theta}[\xi]:=\frac{1}{\theta}\log\mathbb{E}[e^{\theta\xi}],
\label{intro-risk-cost}%
\end{equation}
which is the well-known risk-sensitive criterion \cite{Kuroda-Nagai}. It
is obvious that $\mathcal{E}_{\theta}$ is a nonlinear expectation viewed as an
operator preserving monotonicity and constants (see \cite{Coquet} and
references therein). Performing the second-order Taylor expansion of
$\mathcal{E}_{\theta}[\xi]$ with respect to $\theta$ around $\theta=0$, the
criterion (\ref{intro-risk-cost}) is approximated in the following way
\begin{equation}
\mathcal{E}_{\theta}[\xi]=\mathbb{E}\left[  \xi\right]  +\frac{\theta}%
{2}\mathrm{Var}\left[  \xi\right]  +O(\theta^{2}), \label{intro-expan}%
\end{equation}
where $\mathrm{Var}[\xi]$ is the variance of $\xi$. The preference includes the variance of $\xi$ that worsens (resp. improves) the risk situation of the criterion if $\theta>0$ (resp. $\theta<0$). Therefore, a decision maker possesses the risk-averse attitude if $\theta>0$, on the opposite, she is risk-seeking if
$\theta<0$. The risk-neutral attitude of the decision maker corresponds to
$\mathcal{E}_{\theta}[\xi]=\mathbb{E}[\xi]$ since $\mathcal{E}_{\theta}%
[\xi]\rightarrow\mathbb{E}[\xi]$ as $\theta\rightarrow0$ (see e.g.
\cite{Bertsekas}). In economics, if $\xi$ represents an intertemporal return,
Hansen et al. \cite{Hansen-Sargent-2006} pointed out that
(\ref{intro-risk-cost}) promotes robustness to model misspecification
by enhancing the decision maker's sensitivity to risk.
Therefore, a plenty of robust decision problems with the criterion (\ref{intro-risk-cost}) are closely related to the risk-sensitive control problems.
For example, Jacobson \cite{Jacobson} and Whittle \cite{Whittle} showed that the risk-sensitive control law can be computed by solving a robust penalty problem.

The risk-sensitivity has been introduced into the control problems since the
early work of Jacobson \cite{Jacobson}, Whittle \cite{Whittle}, and after that
many researchers studied this subject (see \cite{Bensoussan-Frehse-Nagai,
Fleming-McEneaney, James, LimZhou05, Moon-2019, Nagai-1996, Whittle90} and references
therein). The risk-sensitive optimal control problems aim at minimizing
\begin{equation}
J(u(\cdot)):=\mathbb{E}\left[  \exp\left\{  \theta\left(  \Phi(X(T))+\int%
_{0}^{T}f(t,X(t),u(t))dt\right)  \right\}  \right]
\label{intro-exp-objec}%
\end{equation}
over all admissible controls,
where $\theta>0$ is the risk-sensitive parameter and the state $X(\cdot)$ satisfies
the controlled stochastic differential equation (SDE):
\begin{equation}
\left\{
\begin{array}
[c]{rl}%
dX(t)= & b(t,X(t),u(t))dt+\sigma(t,X(t),u(t))dW(t),\\
\ X(0)= & x_{0}.
\end{array}
\right.  \label{intro-fward-state-eq}%
\end{equation}
The SDE (\ref{intro-fward-state-eq}) is driven by a $d$-dimensional ($d>1$)
Brownian motion $W=(W_{1}(t),$ $W_{2}(t),...,W_{d}(t))_{0\leq t\leq T}%
^{\intercal}$ with the initial data $x_{0}$; the coefficients $b$, $\sigma$,
$\Phi$, $f$ are measurable, deterministic functions in suitable dimension.

In the existing literature, it is always assumed that the decision maker possesses the same risk
attitudes toward different risk sources. In more detail, the
decision maker has the identical risk-sensitive parameter $\theta$ while
confronting different risk sources $W_{i}(t),i=1,...,d$. However, this is an
almost impossible thing to happen in reality. From the perspective of
stochastic differential utility \cite{Duffie-Epstein}, the authors in \cite{Quenez03} introduced the
asymmetry in risk aversion. To differentiate the attitudes toward risks depending
on their sources, they assume that each component of the $d$-dimensional standard
Brownian motion $W$ is an independent source of the consumption shock. For
example, if $d=2$ then $W_{1}$ may represent weather shock while $W_{2}$ may
represent health shocks, and a consumer usually shows different risk aversion
towards different risk sources. So it is an interesting issue that how to formulate the stochastic control
problems under asymmetric risk-sensitivity.

It is obvious that the risk-sensitive criterion (\ref{intro-exp-objec}) can
not characterize the asymmetric risk-sensitivity
towards different risk sources. Therefore, we must reconstruct the asymmetric
risk-sensitive criteria. Note that in \cite{Karoui-Hamadene} the risk-sensitive
control for the BSDE objective functional with quadratic growth coefficient
was considered, that is, (\ref{intro-exp-objec}) can be equivalently described
by the solution $Y(\cdot)$ of the following BSDE at time $0$:
\begin{equation}
\left\{
\begin{array}
[c]{rl}%
dY(t)= & -\left[  \frac{\theta}{2}\left\vert Z(t) \right\vert^{2}+f(t,X(t),u(t))\right]
dt+Z^{\intercal}(t)dW(t),\\
\ Y(T)= & \Phi(X(T)).
\end{array}
\right.  \label{intro-bward-state-eq-symmetric}%
\end{equation}
%Under mild conditions on $\Phi$ and $f$, following the argument in Theorem 3.1 in \cite{Briand-Lepeltier-Martin} it can be proved that the finiteness of the right-hand side in (\ref{intro-exp-objec}) is sufficient for constructing a solution $(Y(\cdot),Z(\cdot))$ of (\ref{intro-bward-state-eq-symmetric}) such that $Y(0)=\frac{1}{\theta}\log J(u(\cdot))$. Furthermore $Y(\cdot)$ is minimal among the solutions in some specific space (see \cite{Briand-H-06} for more details about the minimal solution of quadratic BSDEs). 
The risk-sensitive criterion is defined by
\begin{equation}
\tilde{J}(u(\cdot)):=Y(0),
\label{intro-obje-recursive}%
\end{equation}
where $Y(\cdot)$ is the solution of (\ref{intro-bward-state-eq-symmetric}). It is easy to verify that $Y(0)=\frac{1}{\theta}\log J(u(\cdot))$.
%(the minimal solution will become the unique solution while (\ref{intro-bward-state-eq-symmetric}) is well posed).

Notice that $\frac{\theta}{2}\left\vert Z(t) \right\vert^{2}$ can be rewritten as $Z^{\intercal}(t)\Gamma Z(t)$ with $\Gamma=\mathrm{diag}\{ \overbrace{\theta/2,\ldots,\theta/2}^{d} \}$.
The risk-sensitive parameter $\theta$ implies that the decision maker possesses the identical risk attitude to different risk sources, which cannot reflect the asymmetric risk-sensitivity arising in reality.
To characterize the asymmetric risk-sensitivity, a natural idea is to set $\Gamma=\mathrm{diag}\{ \gamma_{1}, \ldots, \gamma_{d} \}$ with different $\gamma_{i}>0$, $i=1,\ldots,d$, which is one of our main findings in this paper.
Consequently, $\frac{\theta}{2}\left\vert Z(t) \right\vert^{2}$ is replaced with $Z^{\intercal}(t)\Gamma Z(t)$ in (\ref{intro-bward-state-eq-symmetric}). 
Under this setting, it is worth pointing out that we cannot obtain a representation similar to
(\ref{intro-exp-objec}) through the exponential transformation for $Y(\cdot)$. In this sense, the asymmetric risk-sensitive criterion proposed in this paper can only be defined by
the quadratic BSDE
\begin{equation}
\left\{
\begin{array}
[c]{rl}%
dY(t)= & -\left[  Z^{\intercal}(t)\Gamma Z(t)+f(t,X(t),u(t))\right]
dt+Z^{\intercal}(t)dW(t),\\
\ Y(T)= & \Phi(X(T)),
\end{array}
\right.  \label{intro-bward-state-eq}%
\end{equation}
where $\Gamma$ is a strictly positive definite matrix.

Based on the proposed criterion, we introduce the asymmetric risk-sensitive stochastic control problems as follows: the goal
is to minimize the asymmetric risk-sensitive criterion (\ref{intro-obje-recursive}) subject to the controlled forward-backward
stochastic differential equation (FBSDE) (see \cite{Karoui-Hamadene,
Elkaroui-PQ, Peng-1993} for more details about controlled FBSDEs)
\begin{equation}
\left\{
\begin{array}
[c]{rl}%
dX(t)= & b(t,X(t),u(t))dt+\sigma(t,X(t),u(t))dW(t),\\
dY(t)= & -\left[  Z^{\intercal}(t)\Gamma Z(t)+f(t,X(t),u(t))\right]
dt+Z^{\intercal}(t)dW(t),\\
\ Y(T)= & \Phi(X(T)).
\end{array}
\right.  \label{intro-FBSDE-state-eq}%
\end{equation}
%which we call the asymmetric (resp. symmetric) risk-sensitive control problem when $\Gamma \neq \frac{\theta}{2}\mathrm{I}_{d\times d}$ (resp. $\Gamma=\frac{\theta}{2}\mathrm{I}_{d\times d}$). Because 
When $\Gamma=\frac{\theta}{2}\mathrm{I}_{d\times d}$, (\ref{intro-obje-recursive}) together with (\ref{intro-FBSDE-state-eq}) degenerates into the classical risk-sensitive stochastic control problems (\ref{intro-exp-objec})-(\ref{intro-fward-state-eq}). In addition, when $\Gamma=0$, it degenerates into the risk-neutral case studied by Peng \cite{Peng90}.
%In particular, (\ref{intro-exp-objec})-(\ref{intro-fward-state-eq}) is also closely related to a %risk-sensitive dynamic asset management problem studied in \cite{Kuroda-Nagai}.

Now we review the solution to the classical risk-sensitive control problem (\ref{intro-exp-objec})-(\ref{intro-fward-state-eq}). Note that the cost functional (\ref{intro-exp-objec}) is not a risk-neutral form, so
the classical results of stochastic control theory cannot be directly applied.
Under the assumption that $(\Phi,f)$ is uniformly bounded, by extending state
variables and logarithmic transformation, Lim and Zhou \cite{LimZhou05} rewrote (\ref{intro-exp-objec})-(\ref{intro-fward-state-eq}) as a risk-neutral form and obtained a new global
risk-sensitive maximum principle (MP) for (\ref{intro-exp-objec})-(\ref{intro-fward-state-eq}).
%The DPP is also concerned to cope with (\ref{intro-exp-objec})-(\ref{intro-fward-state-eq}) when $(\Phi,f)$ is uniformly bounded \cite{Basar-1995, Fleming-Soner, Nagai-1996}, where the related value
%function is defined by minimizing the logarithmic transformation of
%(\ref{intro-exp-objec}) and it is characterized by the solution of the
%associated HJB equation.
The existence of a smooth solution to the associated HJB equation is proved under
restrictive regularity conditions both in the case
of control-independent diffusions \cite{Nagai-1996} and in the case of
control-dependent diffusions \cite{Fleming-Soner}.
Relying on the results of parabolic PDEs, weaker regularity of solutions is obtained by Bensoussan, Frehse, and Nagai
\cite{Bensoussan-Frehse-Nagai} in the case of control-independent diffusions.
We emphasize that Moon \cite{Moon-2021} studied the risk-sensitive control
(\ref{intro-obje-recursive}), (\ref{intro-FBSDE-state-eq}) with $\Gamma
=\frac{\theta}{2}\mathrm{I}_{d \times d}$ and control-dependent diffusions by adopting the dynamic programming principle (DPP) approach.

%no matter from the perspective of the maximum principle (MP) or the
%dynamic programming principle (DPP). To overcome this technical difficulty,
%researchers resort to the approaches that extend state variables to apply the classical MP or DPP and introduce a log transformation to derive the adjoint
%system or the Hamilton-Jacobi-Bellman (HJB) equation for (\ref{intro-exp-objec})-(\ref{intro-fward-state-eq}) \cite{Basar-1995, Fleming-Soner, LimZhou05, Nagai-1996}.

Moreover, for the linear-quadratic (LQ) case, Hansen and Sargent \cite{Hansen-Sargent-1995} considered a discrete-time
LQ, Gaussian risk-sensitive control problem with discounting.
A continuous-time risk-sensitive LQ problem was studied by Lim and Zhou \cite{LimZhou05}. 
%The optimal control is obtained in the feedback form by using the new MP they established early in that paper under some convexity conditions. 
Duncan \cite{Duncan} solves the LQ counterpart of (\ref{intro-exp-objec})-(\ref{intro-fward-state-eq}) by using a completion-of-square approach instead of MP or DPP.
Nevertheless, in the case of asymmetric risk-sensitive LQ
control, one cannot follow the ideas in \cite{Duncan} by carrying out the
completion-of-squares approach in the exponential in (\ref{intro-exp-objec})
because the logarithmic (equivalently, exponential) transformation fails as we mentioned early.

The main contributions in this paper are as follows: Firstly, a new criterion that describes the asymmetric risk-sensitivity is proposed, which can only be defined through the initial value of the solutions of quadratic BSDEs.
Not only can it interpret the asymmetric risk attitudes toward different risk sources emerging in economics and finance, but also provides a practical basis of the problem (\ref{intro-obje-recursive}) governed by (\ref{intro-FBSDE-state-eq}).
Due to the failure of the logarithmic transformation in the asymmetric risk-sensitive case, it is difficult to obtain the Taylor expansion for multivariate risk-sensitive parameters.
By applying the convex perturbation method, we obtain the Taylor expansion for the asymmetric risk-sensitive criterion. To further explore the meaning of $\Gamma$ in terms of risk-sensitivity
like (\ref{intro-expan}), the Taylor expansion for the asymmetric
risk-sensitive criterion is a key issue.
% Once $\Gamma\neq\frac{\theta}{2}\mathrm{I}_{d\times d}$, the Taylor expansion with respect to $\theta$ does not work due to the exponential transformation failure.
Without loss of generality, we put $d=2$ and $\Gamma$ be a diagonal matrix whose entries $\gamma_{1},\gamma_{2}>0$ and $\gamma_{1}\neq\gamma_{2}$ to illustrate our results. 
The Taylor expansion is ultimately expressed by
\begin{equation}
\mathcal{E}_{\gamma_{1},\gamma_{2}}[\xi]=\mathbb{E}[\xi]+\gamma_{1}%
\mathrm{D}_{1}[\xi]+\gamma_{2}\mathrm{D}_{2}[\xi]+o\left(  \sqrt{\gamma
_{1}^{2}+\gamma_{2}^{2}}\right)  ,
\label{intro-expan-asymmetric}
\end{equation}
where the functionals $(\mathrm{D}_{1},\mathrm{D}_{2})$, inheriting some
axiomatic properties that $\mathrm{Var}[\cdot]$ possesses, is called a
variance decomposition on the domain of $\mathcal{E}_{\gamma_{1},\gamma_{2}}$
such that $\mathrm{Var}[\xi]=\mathrm{D}_{1}[\xi]+\mathrm{D}_{2}[\xi]$ (see Section 2 for the
details). The result illustrates the asymmetry of risk attitudes (i.e.,
$\gamma_{i}$, $i=1,2$) for different risks with various weights $\mathrm{D}%
_{i}[\xi]$, for $i=1,2$. Particularly, (\ref{intro-expan-asymmetric})
degenerates into (\ref{intro-expan}) when $\gamma_{1}=\gamma_{2}=\frac{\theta
}{2}$. Actually, for any given strictly positive definite matrix $\Gamma$, (\ref{intro-obje-recursive}) is actually a quadratic filtration-consistent nonlinear expectation (see \cite{Hu-Ma-Peng-Yao, Ma-Yao} for more details) induced by (\ref{intro-bward-state-eq}), denoted by $\mathcal{E}_{\Gamma}$, of the random variable $\xi=\Phi(X(T))+\int_{0}^{T}g(t,X(t),u(t))dt$, i.e. $\mathcal{E}_{\Gamma}[\xi]=Y(0)$. 

%Second, in view of the asymmetric risk-sensitive control problems themselves,
%when $(\Phi,f)$ are uniformly bounded, by a simple application of our
%earlier work \cite{Hu-Ji-Xu22} in problem (\ref{intro-obje-recursive}) with (\ref{intro-FBSDE-state-eq}), 
%we obtain a new asymmetric risk-sensitive MP without the smoothness assumption on the value function,
%which generalizes the results obtained by Lim and Zhou \cite{LimZhou05}.

Second, an asymmetric risk-sensitive LQ control problem is solved by developing a novel completion-of-squares technique for controlled quadratic BSDEs.
Applying the derived MP informally, a candidate optimal control $\bar{u}(\cdot)$ with feedback type is determined by a new Riccati differential equation.
In contrast to the classical risk-sensitive counterpart introduced by Duncan \cite{Duncan}, this Riccati equation has a nontrivial term reflecting the asymmetric risk sensitivity
and degenerates into the former when $\Gamma=\frac{\theta}{2}\mathrm{I}_{d \times d}$.
Unfortunately, the usual completion-of-squares method fails to prove the optimality of $\bar{u}(\cdot)$.
%The main difficulty is to verify that a Girsanov exponential is a Radon-Nikodym derivative,
%which is used to absorb an additional term arising from the completion of a square that attains a minimum at $\bar{u}(\cdot)$ over all admissible controls $u(\cdot)$.
%As the Girsanov exponential depends upon $u(\cdot)$ in general, this verification will be tough unless some restrictive conditions are imposed on $u(\cdot)$.
Under our framework, taking advantage of the structure of the quadratic BSDEs whose generators are convex in both the states and controls, we tackle the difficulty of verifying that a Girsanov exponential is a Radon-Nikodym derivative. The completion-of-squares technique then holds.
Not only that, the novelty of our technique enables us to discuss the admissibility of $\bar{u}(\cdot)$ over 
a wider range of admissible controls than \cite{Duncan}.
%the completion of the aforementioned square is replaced with the completion of its gradient at $\bar{u}(\cdot)$ (equals to zero),
%which makes the involved Girsanov exponential depend only on $\bar{u}(\cdot)$. Since $\bar{u}(\cdot)$ possesses the Gaussian property, 
%it is naturally an exponential martingale due to the results for the absolute continuity for Gaussian measures.

%Specifically, for the classical risk-sensitive LQ problem studied in \cite{Duncan} (i.e. taking $\Gamma=\frac{\theta}{2}\mathrm{I}_{d \times d}$), the optimality is claimed among the admissible controls $u(\cdot)$ satisfying
%\begin{equation}
%\label{intro-candidate-Duncan}
%u(t)=\bar{u}(t) + \alpha \mathrm{1}_{[t_{0},t_{1}]}(t), \ t \in [0,T]
%\end{equation}
%with $0 \leq t_{0} < t_{1} \leq T$ and bounded $\mathcal{F}_{t_{0}}$-measurable random variables $\alpha$.
%Employing Theorem 3.1 in \cite{Briand-Lepeltier-Martin}, our admissible set is completely characterized by the finiteness of the right-hand side in (\ref{intro-exp-objec}), that is,
%a control process $u(\cdot)$ is admissible if and only if
%{\small \[
%	\mathbb{E}\left[  \exp\left\{  \theta \left( \frac{1}{2} X^{\intercal
%	}(T)HX(T)+\frac{1}{2} \int_{0}^{T}\left[X^{\intercal}%
%	(t)M(t)X(t)+u^{\intercal}(t)N(t)u(t)\right]dt\right)  \right\}  \right]  <+\infty,
%\]
%}
%which is more general than (\ref{intro-candidate-Duncan}). 

Finally, as an application of the asymmetric risk-sensitive control, 
%taking $\xi=\log V(T)$ in (\ref{intro-expan-asymmetric}) for some portfolio value $V(\cdot)$ at a finite time horizon $T$,
we study a dynamic portfolio optimization problem that generalizes the results studied in \cite{Kuroda-Nagai} on a finite time horizon, which can characterize different
weights for varieties of risk sources. 
%The considered criterion of this problem is an extension of the one that is widely studied in the risk-sensitive dynamic portfolio optimization \cite{Pliska1999,
%Kuroda-Nagai, Nagai-2004, Nagai-Peng2002} to the asymmetric case. 
%The new criterion and related control problems might have more potential applications.
The new criterion and related control problems might have more potential for extensions of risk-sensitive dynamic portfolio optimization problems in the existing literature (e.g. \cite{Pliska1999,
Kuroda-Nagai, Nagai-2004, Nagai-Peng2002})  to the asymmetric cases.
%For example, as the classical risk-sensitive control problems closely link to the robust control and model misspecification (see \cite{Hansen-Sargent-2006} for the details),
%the asymmetric risk-sensitive criterion may provide a new perspective to describe different risk aversion
%towards different risk sources in the robust decision problems,
%and the approaches will become powerful tools to deal with these issues.

The rest of the paper is organized as follows. In Section 2, we give some
preliminaries and introduce the asymmetric risk-sensitive criterion with a new mean-variance representation. 
%that is uncovered by applying a variational approach to an auxiliary control problem.
Section 3 sets the stage by formulating nonlinear asymmetric risk-sensitive control problems and focusing on
the case of bounded conditions and the LQ case.
As an application of the asymmetric risk-sensitive control, section 4 discusses an asymmetric risk-sensitive dynamic portfolio optimization problem.

\section{Asymmetric risk-sensitive criterion}

As mentioned in the introduction, the risk-sensitive criterion
(\ref{intro-risk-cost}) can be regarded as a nonlinear expectation
$\mathcal{E}_{\theta}$ such that it has a Taylor expansion around $\theta= 0$,
that is,
\begin{equation}
\label{Taylor-expan-sym}
\mathcal{E}_{\theta}[\xi]=\mathbb{E}\left[
\xi\right]  + \frac{\theta}{2}\mathrm{Var}\left[  \xi\right]  + O(\theta^{2}),
\end{equation}
where $\xi= \Phi(X(T)) + \int_{0}^{T} f(t,X(t),u(t)) dt$ for any admissible
control $u(\cdot)$. A natural question follows that whether we can perform a
Taylor expansion for asymmetric counterpart of (\ref{intro-risk-cost}), if we can, to what extent such a mean-variance
representation generalizes (\ref{Taylor-expan-sym}).
Our main goal in this section is to establish the asymmetric risk-sensitive criterion and obtain its mean-variance representation.
Before the start, we shall provide some preliminaries at first.

Let $(\Omega,\mathcal{F},\mathbb{P})$ be a complete probability space on which
a standard $d$-dimensional Brownian motion $W=(W_{1}(t),W_{2}(t),...W_{d}%
(t))_{0\leq t\leq T}^{\intercal}$ is defined. Assume that $\mathbb{F=}
\{\mathcal{F}_{t}, $ $0\leq t\leq T\}$ is the $\mathbb{P}$-augmentation of the
natural filtration of $W$, where $\mathcal{F}_{0}$ contains all $\mathbb{P}%
$-null sets of $\mathcal{F}$. Denote by $\mathbb{R}^{n}$ the $n$-dimensional
real Euclidean space and $\mathbb{R}^{k\times n}$ the set of $k\times n$ real
matrices. Let $\langle\cdot,\cdot\rangle$ (resp. $\left\vert \cdot\right\vert
$) denote the usual scalar product (resp. usual norm) of $\mathbb{R}^{n}$ and
$\mathbb{R}^{k\times n}$. The scalar product (resp. norm) of $A=(a_{ij})$,
$B=(b_{ij})\in\mathbb{R}^{k\times n}$ is denoted by $\langle A,B\rangle
=\mathrm{tr}\{AB^{\intercal}\}$ (resp. $\vert A\vert=\sqrt{\mathrm{tr}%
\{AA^{\intercal}\}}$), where the superscript $^{\intercal}$ denotes the
transpose of vectors or matrices. Denote by $\mathbb{S}^{n\times n}$ the set
of all $n\times n$ real symmetric matrices and $\mathrm{I}_{n \times n}$ the
$n \times n$ identity matrix.

For any given real number $p\geq1$ and positive integer $m$, we introduce the following spaces.
 $L^{\infty}([0,T];\mathbb{R}^{n})$: the space of
	$\mathbb{R}^{n}$-valued measurable functions $f(\cdot)$ on $[0,T]$ such
	that
	\[
	||f(\cdot)||_{\infty}:=\mathrm{sup}_{t \in\lbrack0,T] }|f(t)|<+\infty.
	\]
 $C([0,T];\mathbb{R}^{n})$: the space of $\mathbb{R}^{n}$-valued continuous functions $f(\cdot)$ on $[0,T]$;
$C^{m}([0,T];$ $\mathbb{R}^{n})$: the space of $\mathbb{R}^{n}$-valued functions $f(\cdot)$ on $[0,T]$
	that is $m$-times continuously differentiable, and the $m$th-order derivative is denoted by $f^{(m)}(\cdot)$; $L^{p}(\mathcal{F}_{T};\mathbb{R}^{n})$ : the space of $\mathcal{F}%
	_{T}$-measurable $\mathbb{R}^{n}$-valued random vectors $\eta$ such that
	\[
	\mathbb{E}[|\eta|^{p}]<+\infty.
	\]
$L^{\infty}(\mathcal{F}_{T};\mathbb{R}^{n})$: the space of
	$\mathcal{F}_{T}$-measurable $\mathbb{R}^{n}$-valued random vectors $\eta$
	such that, $\mathbb{P}$-a.s.
	\[
	\mathrm{ess~sup}_{\omega\in\Omega}|\eta(\omega)|<+\infty.
	\]
$L_{\mathbb{F}}^{\infty}([0,T];\mathbb{R}^{n})$: the space of
	$\mathbb{F}$-adapted $\mathbb{R}^{n}$-valued stochastic processes $f(\cdot)$ on $[0,T]$
	such that, $\lambda \otimes \mathbb{P}$-a.e.
	\[
	\mathrm{ess~sup}_{(t,\omega)\in\lbrack0,T]\times\Omega
	}|f(t,\omega)|<+\infty;
	\]
	where $\lambda$ represents the Lebesgue measure on $[0,T]$.
 $L_{\mathbb{F}}^{p,q}([0,T];\mathbb{R}^{n})$: the space of
	$\mathbb{F}$-adapted $\mathbb{R}^{n}$-valued stochastic processes $f(\cdot)$ on $[0,T]$
	such that
	\[
	\mathbb{E}\left[  \left(  \int_{0}^{T}%
	|f(t)|^{p}dt\right)  ^{\frac{q}{p}}\right] <+\infty;
	\]
	and when $p=q$, we simply write $L_{\mathbb{F}}^{p}([0,T];\mathbb{R}^{n})$
	rather than $L_{\mathbb{F}}^{p,q}([0,T];\mathbb{R}^{n})$.
 $L_{\mathbb{F}}^{p}(\Omega;$ $C([0,T],\mathbb{R}^{n}))$: the space of
	$\mathbb{F}$-adapted, $\mathbb{R}^{n}$-valued continuous stochastic processes $f(\cdot)$
	on $[0,T]$ such that
	\[
	\mathbb{E}\left[  \sup_{t \in [0,T]}|f(t)|^{p}\right]  <+\infty.
	\]

%\subsection{Asymmetric risk-sensitive criterion and its mean-variance representation}

To describe the asymmetric risk-sensitivity, without loss of generality,
we consider the diagonal matrix $\Gamma= \mathrm{diag}\{ \gamma_{1}, \ldots,
\gamma_{d} \}$ for some $(\gamma_{1}, \ldots, \gamma_{d}) \in\mathbb{R}^{d}$
such that $\gamma_{i} >0, i=1,\ldots,d$. For simplicity, we put $d=2$ and
the analysis in the case $d>2$ is similar to the former case.
Consider the quadratic BSDE
\begin{equation}
	\left\{
	\begin{array}
	[c]{rl}%
	dY^{\gamma_{1},\gamma_{2}}(t)= & -\left[  \gamma_{1}\left\vert Z_{1}^{\gamma_{1},\gamma_{2}
	}(t)\right\vert ^{2}+\gamma_{2}\left\vert Z_{2}^{\gamma_{1},\gamma_{2}}(t)\right\vert
	^{2}\right]  dt+Z_{1}^{\gamma_{1},\gamma_{2}}(t)dW_{1}(t)\\
&+Z_{2}^{\gamma_{1},\gamma_{2}}%
	(t)dW_{2}(t),\\
	\ Y^{\gamma_{1},\gamma_{2}}(T)= & \xi.
	\end{array}
	\right.  \label{asymmetric-criterion-eq}%
\end{equation}

\begin{assumption}
\label{Taylor-terminal-condition} $\xi$ is an $\mathcal{F}_{T}$-measurable
random variable such that $\mathbb{E} \left[  e^{16 \left\vert \xi\right\vert
} \right]  < +\infty$.
\end{assumption}

The following lemma is an application of Corollary 4 in \cite{Briand-H} and
Theorem 3.3 in \cite{Delbaen-Hu-Richou2011} to (\ref{asymmetric-criterion-eq}).

\begin{lemma}
\label{lem-Taylor-state-uniform-bound} Let Assumption
\ref{Taylor-terminal-condition} hold. If $(\gamma_{1}, \gamma_{2})\in
\lbrack0,1]\times\lbrack0,1]$, then the state equation (\ref{asymmetric-criterion-eq})
admits a unique solution $\left(  Y^{\gamma_{1},\gamma_{2}}(\cdot),Z^{\gamma_{1},\gamma_{2}}%
(\cdot) \right)  $ such that
\[ \mathbb{E}\left[  e^{16\sup_{t\in\lbrack
0,T]}\left\vert Y^{\gamma_{1},\gamma_{2}}(t)\right\vert } \right]  <+\infty
\]
 and
$Z^{\gamma_{1},\gamma_{2}}(\cdot)= \left(  Z_{1}^{\gamma_{1},\gamma_{2}}(\cdot), Z_{2}%
^{\gamma_{1},\gamma_{2}}(\cdot) \right)  \in L_{\mathbb{F}}^{2}([0,T];\mathbb{R}^{2})$.
Moreover, there exists a $C>0$ such that
\begin{equation}
\mathbb{E}\left[  \exp\left\{  16\sup_{t\in\lbrack0,T]}\left\vert
Y^{\gamma_{1},\gamma_{2}}(t)\right\vert \right\}  +\left(  \int_{0}^{T}\left\vert
Z^{\gamma_{1},\gamma_{2}}(t)\right\vert ^{2}dt\right)  ^{4}\right]  \leq C\mathbb{E}%
\left[  e^{16\left\vert \xi\right\vert }\right]  , \label{Taylor-state-est}%
\end{equation}
where $C$ depends only on $T$.
\end{lemma}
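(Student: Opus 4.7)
This is advertised as a direct application of two known quadratic-BSDE results, so the plan is to verify their hypotheses and, above all, to track the dependence of the resulting constants on $(\gamma_1,\gamma_2)$. The generator of \eqref{asymmetric-criterion-eq},
\[
g(z_1,z_2) := \gamma_1 |z_1|^2 + \gamma_2 |z_2|^2,
\]
is independent of $(t,\omega,y)$, jointly convex in $z$, nonnegative, and for $(\gamma_1,\gamma_2) \in [0,1]^2$ dominated by $|z|^2$. Consequently its quadratic-growth constant is $\leq 1$ uniformly in $(\gamma_1,\gamma_2)$, which is the structural fact responsible for the uniformity of the constant $C$ in \eqref{Taylor-state-est}.

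Combined with Assumption \ref{Taylor-terminal-condition}, which supplies $\mathbb{E}[e^{16|\xi|}] < +\infty$, this places \eqref{asymmetric-criterion-eq} in the scope of Corollary 4 of \cite{Briand-H} on quadratic BSDEs with convex generator and exponentially integrable terminal data. I would invoke it to obtain the unique pair $(Y^{\gamma_1,\gamma_2}, Z^{\gamma_1,\gamma_2})$ with $Z^{\gamma_1,\gamma_2} \in L^2_{\mathbb{F}}([0,T];\mathbb{R}^2)$ and with $Y^{\gamma_1,\gamma_2}$ enjoying exponential moments of every order, uniqueness coming from the convexity of $g$. The exponential a priori bound furnished by the same paper, specialized to growth constant $1$ and then combined with Doob's maximal inequality, upgrades this to
\[
\mathbb{E}\!\left[ \exp\!\big\{ 16 \sup_{t\in[0,T]} |Y^{\gamma_1,\gamma_2}(t)| \big\} \right] \leq C\, \mathbb{E}\!\left[ e^{16|\xi|} \right],
\]
with $C$ depending only on $T$.

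For the $Z$-estimate I would then invoke Theorem 3.3 of \cite{Delbaen-Hu-Richou2011}, which controls moments of $\int_0^T |Z(s)|^2 ds$ by exponential moments of $\sup_{t} |Y(t)|$. Taking its fourth-moment version and substituting the bound from the previous step closes \eqref{Taylor-state-est}. The exponent $16$ in Assumption \ref{Taylor-terminal-condition} is calibrated precisely so that the chain \emph{exponential integrability of $\xi$} $\Rightarrow$ \emph{exponential integrability of $\sup_t |Y|$} $\Rightarrow$ \emph{fourth moment of $\int_0^T |Z|^2 ds$} closes without losing integrability at any step.

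\noindent\textbf{Main obstacle.} Each individual step is an appeal to the literature, so the only genuinely delicate point is \emph{uniformity in $(\gamma_1,\gamma_2)$}. One must verify that the constants supplied by \cite{Briand-H} and \cite{Delbaen-Hu-Richou2011} depend only on $T$ and on the uniform upper bound $\max(\gamma_1,\gamma_2) \leq 1$ for the growth constant of $g$, and not on the particular values of $(\gamma_1,\gamma_2) \in [0,1]^2$. This requires inspecting the dependencies inside the proofs of those two theorems rather than treating them as black boxes, but it introduces no new analytical ingredient.
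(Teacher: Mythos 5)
Your proposal matches the paper exactly: the paper gives no proof beyond the single sentence that the lemma "is an application of Corollary 4 in Briand--Hu and Theorem 3.3 in Delbaen--Hu--Richou," which is precisely the two-step citation chain you lay out (existence/uniqueness and exponential moments of $Y$ from the convex quadratic-BSDE theory, then moments of $\int_0^T |Z|^2\,dt$ from the exponential bound on $\sup_t|Y|$). Your added observation that the quadratic-growth constant of the generator is bounded by $1$ uniformly over $(\gamma_1,\gamma_2)\in[0,1]^2$, so that $C$ depends only on $T$, is exactly the point the paper leaves implicit and is correctly identified as the only delicate step.
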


For any given $(\gamma_{1}, \gamma_{2})\in\lbrack0,1]\times\lbrack
0,1]$, Lemma \ref{lem-Taylor-state-uniform-bound} guarantees the
well-posedness of quadratic BSDE (\ref{asymmetric-criterion-eq}). 
According to Definition 3.3 and Example 3.4 in \cite{Hu-Ma-Peng-Yao}, the unique solution
$Y^{\gamma_{1}, \gamma_{2}}(\cdot)$ to (\ref{asymmetric-criterion-eq}) actually induces a quadratic
$\mathbb{F}$-consistent nonlinear expectation $\mathcal{E}_{\gamma_{1},\gamma_{2}}$ with
its domain $\mathrm{Dom}(\mathcal{E}_{\gamma_{1},\gamma_{2}})$ such that
\begin{equation}
\label{quadratic-g-expectation}%
\begin{array}
[c]{rl}%
\mathcal{E}_{\gamma_{1},\gamma_{2}}[\xi]:= & Y^{\gamma_{1},\gamma_{2}}(0),\text{ }\forall \xi\in
\mathrm{Dom}(\mathcal{E}_{\gamma_{1},\gamma_{2}}),\\
\mathrm{Dom}(\mathcal{E}_{\gamma_{1},\gamma_{2}}):= & \left\{  \xi\in L^{2}%
(\mathcal{F}_{T};\mathbb{R}),\text{ }\mathbb{E}[e^{16\left\vert \xi\right\vert
}]<+\infty\right\}  .
\end{array}
\end{equation}

%Similar to (\ref{intro-risk-cost}), 

\begin{definition}
	\label{def-asymmetric-criterion}
For any random variable $\xi\in \mathrm{Dom}(\mathcal{E}_{\gamma_{1},\gamma_{2}})$, the nonlinear expectation $\mathcal{E}_{\gamma_{1},\gamma_{2}}[\xi]$
is called the asymmetric risk-sensitive criterion with respect to $\xi$.
\end{definition}

We introduce an auxiliary control problem and resort to a variational method, which is usually used in deriving the stochastic MP,
to perform the Taylor expansion for $\mathcal{E}_{\gamma_{1},\gamma_{2}}[\xi]$.
In the following context, the constant $C$ may change from line to line in the
proofs.

Consider the controlled BSDE
\begin{equation*}
\left\{
\begin{array}
[c]{rl}%
dY^{v_{1},v_{2}}(t)= & -\left[  v_{1}\left\vert Z_{1}^{v_{1},v_{2}%
}(t)\right\vert ^{2}+v_{2}\left\vert Z_{2}^{v_{1},v_{2}}(t)\right\vert
^{2}\right]  dt+Z_{1}^{v_{1},v_{2}}(t)dW_{1}(t)\\
&+Z_{2}^{v_{1},v_{2}}%
(t)dW_{2}(t),\\
\ Y^{v_{1},v_{2}}(T)= & \xi,
\end{array}
\right.
%\label{Taylor-state-eq}%
\end{equation*}
where control variables $(v_{1}, v_{2})\in\lbrack0,1]\times\lbrack0,1]$.
The objective is to
minimize
\begin{equation}
J(v_{1}, v_{2}):=Y^{v_{1},v_{2}}(0) \label{Taylor-obje}%
\end{equation}
over $(v_{1}, v_{2})\in\lbrack0,1]\times\lbrack0,1]$.
One can observe that the couple $\left(  \bar{v}_{1},
\bar{v}_{2} \right)  = \left(  0,0 \right)  $ minimize (\ref{Taylor-obje})
uniquely, and the corresponding optimal trajectory, denoted by $\left(
\bar{Y}(\cdot), \bar{Z}_{1}(\cdot),\right.$ $ \left. \bar{Z}_{2}(\cdot) \right)$, satisfies
the following BSDE:
\begin{equation}
\left\{
\begin{array}
[c]{rl}%
d\bar{Y}(t)= & \bar{Z}_{1}(t)dW_{1}(t)+\bar{Z}_{2}(t)dW_{2}(t),\\
\ \bar{Y}(T)= & \xi.
\end{array}
\right.
\label{Taylor-opti-eq}%
\end{equation}
Since $[0,1]\times[0,1] $ is closed and convex, we adopt the convex
perturbation around $\left(  0,0 \right)  $ to deduce the variational equation for
this auxiliary control problem. For any $\gamma_{1}, \gamma_{2} \in[0,1]$, set
$v^{\gamma_{i}}=\bar{v}_{i}+\gamma_{i}(1-\bar{v}_{i}),i=1,2$. It is obvious that
$\left(  v^{\gamma_{1}}, v^{\gamma_{2}} \right)  = \left(  \gamma_{1},
\gamma_{2} \right) $.
%The corresponding state trajectories, denoted by
%Let $\left(  Y^{\gamma_{1},\gamma_{2}}(\cdot), Z_{1}^{\gamma_{1},\gamma_{2}%
%}(\cdot), Z_{2}^{\gamma_{1},\gamma_{2}}(\cdot) \right)  $ be the solution to
%(\ref{Taylor-state-eq}) corresponding to the admissible control $\left(
%\gamma_{1}, \gamma_{2} \right)  $.
Combing (\ref{Taylor-opti-eq}), we have
\begin{equation}%
\begin{array}
[c]{rl}%
Y^{\gamma_{1},\gamma_{2}}(t)-\bar{Y}(t)= & \displaystyle \int_{t}^{T}\left(
\gamma_{1}\left\vert Z_{1}^{\gamma_{1},\gamma_{2}}(s)\right\vert ^{2}%
+\gamma_{2}\left\vert Z_{2}^{\gamma_{1},\gamma_{2}}(s)\right\vert ^{2}\right)
ds\\
& -\displaystyle \int_{t}^{T}\left(  Z_{1}^{\gamma_{1},\gamma_{2}}(s)-\bar
{Z}_{1}(s)\right)  dW_{1}(s)\\
&- \displaystyle \int_{t}^{T}\left(  Z_{2}%
^{\gamma_{1},\gamma_{2}}(s)-\bar{Z}_{2}(s)\right)  dW_{2}(s),
\end{array}
\label{Taylor-expan-1}%
\end{equation}
We provide the estimate for (\ref{Taylor-expan-1}) by the following lemma.

\begin{lemma}
\label{lem-Taylor-expan1-est} Let Assumption \ref{Taylor-terminal-condition}
hold. Then
\[
\mathbb{E}\left[  \sup_{t\in\lbrack0,T]}\left\vert Y^{\gamma_{1},\gamma_{2}%
}(t)-\bar{Y}(t)\right\vert ^{4}+\left(  \int_{0}^{T}\left\vert Z_{{}}%
^{\gamma_{1},\gamma_{2}}(t)-\bar{Z}(t)\right\vert ^{2}dt\right)  ^{2}\right]
=O\left(  \left(  \gamma_{1}^{2}+\gamma_{2}^{2}\right)  ^{2}\right)  ,
\]
where $Z^{\gamma_{1},\gamma_{2}}(\cdot)-\bar{Z}(\cdot)=\left(  Z_{1}%
^{\gamma_{1},\gamma_{2}}(\cdot)-\bar{Z}_{1}(\cdot),Z_{2}^{\gamma_{1}%
,\gamma_{2}}(\cdot)-\bar{Z}_{2}(\cdot)\right)  $.
\end{lemma}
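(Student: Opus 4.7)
The plan is to treat the difference process $\Delta Y := Y^{\gamma_1,\gamma_2} - \bar Y$ and $\Delta Z := Z^{\gamma_1,\gamma_2} - \bar Z$ as the solution of a linear BSDE whose terminal value is zero and whose driver is the purely exogenous process $f(t) := \gamma_1 |Z_1^{\gamma_1,\gamma_2}(t)|^2 + \gamma_2 |Z_2^{\gamma_1,\gamma_2}(t)|^2$. Because the generator does not depend on $\Delta Z$, the standard $L^p$-estimate for BSDEs with zero terminal reduces everything to controlling moments of $\int_0^T f(s)\,ds$. The key observation is the pointwise inequality
\[
\gamma_1 |Z_1^{\gamma_1,\gamma_2}(t)|^2 + \gamma_2 |Z_2^{\gamma_1,\gamma_2}(t)|^2 \leq \sqrt{\gamma_1^2+\gamma_2^2}\,\sqrt{|Z_1^{\gamma_1,\gamma_2}(t)|^4 + |Z_2^{\gamma_1,\gamma_2}(t)|^4} \leq \sqrt{\gamma_1^2+\gamma_2^2}\,|Z^{\gamma_1,\gamma_2}(t)|^2,
\]
obtained by Cauchy--Schwarz on the pair $(\gamma_1,\gamma_2)$ and $(|Z_1|^2,|Z_2|^2)$. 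This extracts the factor $\sqrt{\gamma_1^2+\gamma_2^2}$ from the driver.

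Concretely, first I would apply It\^o's formula to $|\Delta Y(t)|^2$ between $t$ and $T$. Using $\Delta Y(T)=0$, this yields
\[
|\Delta Y(t)|^2 + \int_t^T |\Delta Z(s)|^2 ds = 2\int_t^T \Delta Y(s) f(s)\,ds - 2\int_t^T \Delta Y(s) \Delta Z(s)^{\intercal} dW(s).
\]
Squaring, taking sup in $t$, applying the Burkholder--Davis--Gundy inequality to the stochastic integral, and using Young's inequality to absorb $\sup_t |\Delta Y|^2$ on the right into the left, one obtains
\[
\mathbb{E}\!\left[\sup_{t\in[0,T]} |\Delta Y(t)|^4 + \Bigl(\int_0^T |\Delta Z(s)|^2 ds\Bigr)^2\right] \leq C\,\mathbb{E}\!\left[\Bigl(\int_0^T f(s)\,ds\Bigr)^4\right].
\]
Plugging in the pointwise bound on $f$ gives
\[
\mathbb{E}\!\left[\Bigl(\int_0^T f(s)\,ds\Bigr)^4\right] \leq (\gamma_1^2+\gamma_2^2)^2\,\mathbb{E}\!\left[\Bigl(\int_0^T |Z^{\gamma_1,\gamma_2}(s)|^2 ds\Bigr)^4\right],
\]
and the second factor on the right is uniformly bounded in $(\gamma_1,\gamma_2)\in[0,1]^2$ by the a priori estimate (\ref{Taylor-state-est}) of Lemma \ref{lem-Taylor-state-uniform-bound}, since Assumption \ref{Taylor-terminal-condition} provides $\mathbb{E}[e^{16|\xi|}]<+\infty$. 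Combining the two displays yields the announced $O((\gamma_1^2+\gamma_2^2)^2)$ bound.

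The main subtlety is making the absorption step in the BDG inequality rigorous: one must verify that $\sup_t|\Delta Y|^4$ is a priori integrable so that the term can indeed be moved to the left-hand side. This is where the choice of exponent $16$ in Lemma \ref{lem-Taylor-state-uniform-bound} pays off, because it guarantees $\mathbb{E}[\sup_t |Y^{\gamma_1,\gamma_2}(t)|^p]<\infty$ for $p=4$ uniformly in $(\gamma_1,\gamma_2)$, and likewise for $\bar Y$ (taking $\gamma_1=\gamma_2=0$), so $\mathbb{E}[\sup_t|\Delta Y|^4]<\infty$ is finite before we try to bound it. Everything else is a routine chain of Young, H\"older, and BDG inequalities; no delicate structural argument about the quadratic generator is needed because the difference BSDE is linear.
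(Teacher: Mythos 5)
Your proposal is correct and follows essentially the same route as the paper: apply the standard $L^{4}$-type BSDE estimate to the difference equation (whose driver $\gamma_{1}|Z_{1}^{\gamma_{1},\gamma_{2}}|^{2}+\gamma_{2}|Z_{2}^{\gamma_{1},\gamma_{2}}|^{2}$ is exogenous), extract the factor $\sqrt{\gamma_{1}^{2}+\gamma_{2}^{2}}$ by Cauchy--Schwarz on $(\gamma_{1},\gamma_{2})$, and invoke the uniform bound on $\mathbb{E}[(\int_{0}^{T}|Z^{\gamma_{1},\gamma_{2}}(t)|^{2}dt)^{4}]$ from Lemma \ref{lem-Taylor-state-uniform-bound}. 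The only difference is that you unpack the It\^{o}--BDG--Young derivation of the standard estimate (and note the a priori integrability needed for the absorption step), where the paper simply cites it.
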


\begin{proof}
Applying a standard BSDE estimate (please refer to \cite{Elkaroui-PQ, Yong-Zhou}) to (\ref{Taylor-expan-1}),
we have
\[%
\begin{array}
[c]{l}%
\mathbb{E}\left[  \sup\limits_{t\in\lbrack0,T]}\left\vert Y^{\gamma_{1}%
,\gamma_{2}}(t)-\bar{Y}(t)\right\vert ^{4}+\left(  \int_{0}^{T}\left\vert
Z_{{}}^{\gamma_{1},\gamma_{2}}(t)-\bar{Z}(t)\right\vert ^{2}dt\right)
^{2}\right]  \\
\leq C\mathbb{E}\left[  \left(  \int_{0}^{T}\left(  \gamma_{1}\left\vert
Z_{1}^{\gamma_{1},\gamma_{2}}(t)\right\vert ^{2}+\gamma_{2}\left\vert
Z_{2}^{\gamma_{1},\gamma_{2}}(t)\right\vert ^{2}\right)  dt\right)
^{4}\right]  \\
\leq C\mathbb{E}\left[  \left(  \int_{0}^{T}\left\vert Z_{{}}^{\gamma
_{1},\gamma_{2}}(t)\right\vert ^{2}dt\right)  ^{4}\right]  \left(  \gamma
_{1}^{2}+\gamma_{2}^{2}\right)  ^{2},
\end{array}
\]
where $Z_{{}}^{\gamma_{1},\gamma_{2}}(\cdot)= \left( Z_{1}^{\gamma_{1},\gamma_{2}}(\cdot), Z_{2}^{\gamma_{1},\gamma_{2}}(\cdot) \right)$,
and $C$ depends  only on $T$.
By Lemma \ref{lem-Taylor-state-uniform-bound}, we conclude that
$$\sup_{\gamma_{1},\gamma_{2}\in\lbrack0,1]}\mathbb{E}\left[  \left(  \int%
_{0}^{T}\left\vert Z_{{}}^{\gamma_{1},\gamma_{2}}(t)\right\vert ^{2}dt\right)
^{4}\right]  \leq C\mathbb{E}\left[  e^{16\left\vert \xi\right\vert }\right],$$
where $C$ depends only on $T$. Therefore, we finally obtain
\[
\mathbb{E}\left[  \sup\limits_{t\in\lbrack0,T]}\left\vert Y^{\gamma_{1}%
,\gamma_{2}}(t)-\bar{Y}(t)\right\vert ^{4}+\left(  \int_{0}^{T}\left\vert
Z_{{}}^{\gamma_{1},\gamma_{2}}(t)-\bar{Z}(t)\right\vert ^{2}dt\right)
^{2}\right]  \leq C\left(  \gamma_{1}^{2}+\gamma_{2}^{2}\right)  ^{2},
\]
where $C$ depends only on $T$ and $\xi$.
\end{proof}

For $i=1,2$, let $\left(  Y_{i}(\cdot), Z_{i1}(\cdot), Z_{i2}(\cdot) \right)
$ be respectively the solution to the following BSDEs:
\begin{equation}
\left\{
\begin{array}
[c]{rl}%
dY_{i}(t)= & -\left\vert \bar{Z}_{i}(t)\right\vert ^{2}dt+Z_{i1}%
(t)dW_{1}(t)+Z_{i2}(t)dW_{2}(t),\\
Y_{i}(T)= & 0.
\end{array}
\right.  \label{Taylor-expan-2}%
\end{equation}
Under Assumption \ref{Taylor-terminal-condition}, the well-posedness of
(\ref{Taylor-expan-2}) can be guaranteed by the classical theory of the BSDEs
(please refer to \cite{Elkaroui-PQ, Yong-Zhou}) and the estimate
(\ref{Taylor-state-est}) holds.

Now we can state the main result of this section.

\begin{theorem}
\label{thm-Taylor-expan} Let Assumption \ref{Taylor-terminal-condition} hold.
Then
\[%
\begin{array}
[c]{l}%
\mathbb{E}\left[  \sup\limits_{t\in\lbrack0,T]}\left\vert Y^{\gamma_{1}%
,\gamma_{2}}(t)-\bar{Y}(t)-\gamma_{1}Y_{1}(t)-\gamma_{2}Y_{2}(t)\right\vert
^{2}\right. \\
+\left.  \sum\limits_{i=1}^{2} \displaystyle\int_{0}^{T}  \left\vert
Z_{i}^{\gamma_{1},\gamma_{2}}(t)-\bar{Z}_{i}(t)-\gamma_{1}Z_{1i}(t)-\gamma
_{2}Z_{2i}(t)\right\vert ^{2}  dt\right]  =o\left(  \gamma_{1}%
^{2}+\gamma_{2}^{2}\right)  .
\end{array}
\]

\end{theorem}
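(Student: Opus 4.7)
The plan is to introduce the remainder processes
\[
\tilde{Y}(t) := Y^{\gamma_{1},\gamma_{2}}(t) - \bar{Y}(t) - \gamma_{1} Y_{1}(t) - \gamma_{2} Y_{2}(t), \qquad \tilde{Z}_{i}(t) := Z_{i}^{\gamma_{1},\gamma_{2}}(t) - \bar{Z}_{i}(t) - \gamma_{1} Z_{1i}(t) - \gamma_{2} Z_{2i}(t),
\]
and subtract (\ref{asymmetric-criterion-eq}), (\ref{Taylor-opti-eq}) and (\ref{Taylor-expan-2}). All three terminal values collapse to $\tilde{Y}(T)=0$, and, by matching the Brownian parts, the drift reorganises as $-\sum_{i=1}^{2}\gamma_{i}\bigl(|Z_{i}^{\gamma_{1},\gamma_{2}}(t)|^{2}-|\bar{Z}_{i}(t)|^{2}\bigr)$. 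The crucial feature is that the driver of $\tilde{Y}$ is an external source $f(t):=\sum_{i}\gamma_{i}(|Z_{i}^{\gamma_{1},\gamma_{2}}|^{2}-|\bar{Z}_{i}|^{2})$ depending only on already-controlled quantities and \emph{not} on $\tilde{Z}$, so the remainder equation is linear and the classical $L^{2}$ estimate applies with no need for Girsanov or BMO tricks.

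Second, I would invoke the standard a priori estimate (cf.\ \cite{Elkaroui-PQ,Yong-Zhou}) to get
\[
\mathbb{E}\Bigl[\sup_{t\in[0,T]}|\tilde{Y}(t)|^{2} + \int_{0}^{T}|\tilde{Z}(t)|^{2}\,dt\Bigr] \leq C\,\mathbb{E}\Bigl[\Bigl(\int_{0}^{T}|f(t)|\,dt\Bigr)^{2}\Bigr],
\]
and factorise the driver via $|Z_{i}^{\gamma_{1},\gamma_{2}}|^{2}-|\bar{Z}_{i}|^{2} = (Z_{i}^{\gamma_{1},\gamma_{2}}-\bar{Z}_{i})\cdot(Z_{i}^{\gamma_{1},\gamma_{2}}+\bar{Z}_{i})$ so that the smallness supplied by Lemma~\ref{lem-Taylor-expan1-est} can be extracted. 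Applying Cauchy--Schwarz first in $t$ and then in $\omega$ yields
\[
\mathbb{E}\Bigl[\Bigl(\int_{0}^{T}|f(t)|\,dt\Bigr)^{2}\Bigr] \leq 2\sum_{i=1}^{2}\gamma_{i}^{2}\,A_{i}^{1/2}B_{i}^{1/2},
\]
where $A_{i}=\mathbb{E}[(\int_{0}^{T}|Z_{i}^{\gamma_{1},\gamma_{2}}-\bar{Z}_{i}|^{2}dt)^{2}]$ and $B_{i}=\mathbb{E}[(\int_{0}^{T}|Z_{i}^{\gamma_{1},\gamma_{2}}+\bar{Z}_{i}|^{2}dt)^{2}]$.

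Third, I would bound $B_{i}$ by a constant uniformly in $(\gamma_{1},\gamma_{2})\in[0,1]^{2}$ using Lemma~\ref{lem-Taylor-state-uniform-bound} (applied to $Z^{\gamma_{1},\gamma_{2}}$ and, taking $\gamma_{1}=\gamma_{2}=0$, also to $\bar{Z}$), and bound $A_{i}$ by $O((\gamma_{1}^{2}+\gamma_{2}^{2})^{2})$ via Lemma~\ref{lem-Taylor-expan1-est}, so that $A_{i}^{1/2}=O(\gamma_{1}^{2}+\gamma_{2}^{2})$. The upshot is
\[
\mathbb{E}\Bigl[\sup_{t\in[0,T]}|\tilde{Y}(t)|^{2} + \sum_{i=1}^{2}\int_{0}^{T}|\tilde{Z}_{i}(t)|^{2}dt\Bigr] \leq C\sum_{i=1}^{2}\gamma_{i}^{2}(\gamma_{1}^{2}+\gamma_{2}^{2}) = O\bigl((\gamma_{1}^{2}+\gamma_{2}^{2})^{2}\bigr),
\]
which is $o(\gamma_{1}^{2}+\gamma_{2}^{2})$ as $(\gamma_{1},\gamma_{2})\to(0,0)$, exactly the required rate.

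The principal obstacle I anticipate is not the mechanics but ensuring that one obtains a \emph{little-}$o$ rather than a big-$O$ bound: the product factorisation $(Z^{\gamma_{1},\gamma_{2}}-\bar{Z})(Z^{\gamma_{1},\gamma_{2}}+\bar{Z})$ is essential, because the crude estimate $\bigl||Z^{\gamma_{1},\gamma_{2}}|^{2}-|\bar{Z}|^{2}\bigr| \leq |Z^{\gamma_{1},\gamma_{2}}|^{2}+|\bar{Z}|^{2}$ would destroy the extra factor of $(\gamma_{1}^{2}+\gamma_{2}^{2})^{1/2}$ that comes from Lemma~\ref{lem-Taylor-expan1-est}. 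It is precisely at this step that the \emph{fourth-order} moment bounds of Lemmas~\ref{lem-Taylor-state-uniform-bound} and~\ref{lem-Taylor-expan1-est} (rather than mere second-order bounds) become indispensable: they allow the double Cauchy--Schwarz to close, whereas second moments would leave an uncontrolled product of two square integrals.
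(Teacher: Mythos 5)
Your proposal is correct and follows essentially the same route as the paper's proof: the same remainder processes, the same factorisation $|Z_{i}^{\gamma_{1},\gamma_{2}}|^{2}-|\bar{Z}_{i}|^{2}=(Z_{i}^{\gamma_{1},\gamma_{2}}+\bar{Z}_{i})(Z_{i}^{\gamma_{1},\gamma_{2}}-\bar{Z}_{i})$, the same standard $L^{2}$ BSDE estimate, and the same double Cauchy--Schwarz closed by the fourth-order moment bounds of Lemmas \ref{lem-Taylor-state-uniform-bound} and \ref{lem-Taylor-expan1-est}, yielding the rate $O\bigl((\gamma_{1}^{2}+\gamma_{2}^{2})^{2}\bigr)=o(\gamma_{1}^{2}+\gamma_{2}^{2})$. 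Your closing remark on why the product factorisation and the fourth moments are indispensable is exactly the point the paper's argument hinges on.
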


\begin{proof}
Denote $\eta(\cdot)=Y^{\gamma_{1},\gamma_{2}}(\cdot)-\bar{Y}(\cdot)-\gamma
_{1}Y_{1}(\cdot)-\gamma_{2}Y_{2}(\cdot)$ and $\zeta_{i}(\cdot)=$
$Z_{i}^{\gamma_{1},\gamma_{2}}(\cdot)-\bar{Z}_{i}(\cdot)-\gamma_{1}%
Z_{1i}(\cdot)-\gamma_{2}Z_{2i}(\cdot)$ for $i=1,2$. From (\ref{Taylor-expan-1}%
) and (\ref{Taylor-expan-2}), we get
\begin{equation}%
\begin{array}
[c]{rl}%
\eta(t)= & \sum\limits_{i=1}^{2} \displaystyle\int_{t}^{T}\left(  \gamma
_{i}(Z_{i}^{\gamma_{1},\gamma_{2}}(s)+\bar{Z}_{i}(s))(Z_{i}^{\gamma_{1}%
,\gamma_{2}}(s)-\bar{Z}_{i}(s))\right)  ds\\
& -\displaystyle\int_{t}^{T}\zeta_{1}(s)dW_{1}(s)-\int_{t}^{T}\zeta
_{2}(s)dW_{2}(s).
\end{array}
\label{Taylor-expan-3}%
\end{equation}
Similar to the estimate (\ref{Taylor-expan-1}), by using a standard BSDE
estimate, we have
\bigskip%
\[%
\begin{array}
[c]{l}%
\mathbb{E}\left[  \sup\limits_{t\in\lbrack0,T]}\left\vert \eta(t)\right\vert
^{2}+\displaystyle \int_{0}^{T}\left(  \left\vert \zeta_{1}(t)\right\vert ^{2}+\left\vert
\zeta_{2}(t)\right\vert ^{2}\right)  dt\right]  \\
\leq C\mathbb{E}\left[  \left( \sum\limits_{i=1}^{2} \displaystyle\int_{0}^{T}\gamma_{i}\left\vert Z_{i}^{\gamma_{1},\gamma_{2}}(t)+\bar{Z}%
_{i}(t)\right\vert \left\vert Z_{i}^{\gamma_{1},\gamma_{2}}(t)-\bar{Z}%
_{i}(t)\right\vert dt\right)  ^{2}\right]  \\
\leq C\sum\limits_{i=1}^{2}\mathbb{E}\left[  \left(  \displaystyle\int_{0}%
^{T}\left\vert Z_{i}^{\gamma_{1},\gamma_{2}}(t)+\bar{Z}_{i}(t)\right\vert
\left\vert Z_{i}^{\gamma_{1},\gamma_{2}}(t)-\bar{Z}_{i}(t)\right\vert
dt\right)  ^{2}\right]  \left(  \gamma_{1}^{2}+\gamma_{2}^{2}\right)  .
\end{array}
\]
For $i=1,2$, by H\"{o}lder's inequality, the estimate (\ref{Taylor-state-est})
and Lemma \ref{lem-Taylor-expan1-est}, we have
\[%
\begin{array}
[c]{l}%
\mathbb{E}\left[  \left(  \displaystyle\int_{0}^{T}\left\vert Z_{i}%
^{\gamma_{1},\gamma_{2}}(t)+\bar{Z}_{i}(t)\right\vert \left\vert Z_{i}%
^{\gamma_{1},\gamma_{2}}(t)-\bar{Z}_{i}(t)\right\vert dt\right)  ^{2}\right]
\\
\leq\mathbb{E}\left[  \left(  \displaystyle\int_{0}^{T}\left\vert
Z_{i}^{\gamma_{1},\gamma_{2}}(t)+\bar{Z}_{i}(t)\right\vert ^{2}dt\right)
\left( \displaystyle \int_{0}^{T}\left\vert Z_{i}^{\gamma_{1},\gamma_{2}}(t)-\bar{Z}%
_{i}(t)\right\vert ^{2}dt\right)  \right]  \\
\leq2\sqrt{2}\left(  \mathbb{E}\left[  \left(  \displaystyle\int_{0}%
^{T}\left\vert Z_{i}^{\gamma_{1},\gamma_{2}}(t)\right\vert ^{2}dt\right)
^{2}\right]  +\mathbb{E}\left[  \left(  \displaystyle\int_{0}^{T}\left\vert
\bar{Z}_{i}(t)\right\vert ^{2}dt\right)  ^{2}\right]  \right)  ^{\frac{1}{2}%
}\\
\quad \ \ \cdot \left(  \mathbb{E}\left[  \left(  \displaystyle\int_{0}^{T}\left\vert
Z_{i}^{\gamma_{1},\gamma_{2}}(t)-\bar{Z}_{i}(t)\right\vert ^{2}dt\right)
^{2}\right]  \right)  ^{\frac{1}{2}}\\
\leq C\left(  \gamma_{1}^{2}+\gamma_{2}^{2}\right)  .
\end{array}
\]
Hence, we finally obtain
\[
\mathbb{E}\left[  \sup\limits_{t\in\lbrack0,T]}\left\vert \eta(t)\right\vert
^{2}+\displaystyle\int_{0}^{T}\left(  \left\vert \zeta_{1}(t)\right\vert
^{2}+\left\vert \zeta_{2}(t)\right\vert ^{2}\right)  dt\right]  \leq C\left(
\gamma_{1}^{2}+\gamma_{2}^{2}\right)  ^{2},
\]
where $C$ depends only on $T$ and $\xi$.
The proof is complete.
\end{proof}

Thanks to Theorem \ref{thm-Taylor-expan}, the following Taylor expansion for
$Y^{\gamma_{1},\gamma_{2}}(0)$ holds:
\begin{equation}
Y^{\gamma_{1},\gamma_{2}}(0)= \bar{Y}(0)+\gamma_{1}Y_{1}(0)+\gamma_{2}%
Y_{2}(0)+o\left(  \sqrt{\gamma_{1}^{2}+\gamma_{2}^{2}}\right)  ,
\label{Taylor-expan-express}%
\end{equation}
where $\bar{Y}(\cdot)$, $Y_{i}(\cdot),i=1,2$ are the unique solutions to
(\ref{Taylor-opti-eq}), (\ref{Taylor-expan-2}) respectively.

\begin{remark}
A higher order Taylor expansion (e.g., second order) of $Y^{\gamma_{1},\gamma_{2}}(0)$ can also be
obtained if stronger integrability is imposed on the exponential of the terminal value
$\xi$. For example, suppose $\mathbb{E}\left[  e^{32\left\vert
\xi\right\vert }\right]  <+\infty$, then, similar to the proof of
(\ref{Taylor-expan-express}), we have
\begin{equation}%
\begin{array}
[c]{rl}%
Y^{\gamma_{1},\gamma_{2}}(0)= & \bar{Y}(0)+\gamma_{1}Y_{1}(0)+\gamma_{2}%
Y_{2}(0)\\
& +\dfrac{1}{2}\left(  \gamma_{1},\gamma_{2}\right)  \left(
\begin{array}
[c]{cc}%
Y_{11}(0) & Y_{12}(0)\\
Y_{21}(0) & Y_{22}(0)
\end{array}
\right)  \left(
\begin{array}
[c]{c}%
\gamma_{1}\\
\gamma_{2}%
\end{array}
\right)  +o\left(  \gamma_{1}^{2}+\gamma_{2}^{2}\right)  ,
\end{array}
\label{Taylor-expan-higher-order}%
\end{equation}
where $\bar{Y}(\cdot)$, $Y_{i}(\cdot) $, $i=1,2$ satisfy (\ref{Taylor-opti-eq}), (\ref{Taylor-expan-2}) respectively, and $Y_{ij}(\cdot)  $, $i,j=1,2$ satisfy the following BSDEs:
\[
\left\{
\begin{array}
[c]{rl}%
dY_{ij}(t)= & -\left[  \bar{Z}_{i}(t)Z_{ji}(t)+\bar{Z}_{j}(t)Z_{ij}(t)\right]
dt+Z_{ij1}(t)dW_{1}(t)+Z_{ij2}(t)dW_{2}(t),\\
Y_{ij}(T)= & 0,\text{ \ }i,j=1,2.
\end{array}
\right.
\]
Here $\bar{Z}(\cdot)=(\bar{Z}_{1}(\cdot),\bar{Z}_{2}(\cdot))$ satisfies (\ref{Taylor-expan-2}).
Furthermore, if $\xi$ has exponential moment of all order, then any order
Taylor expansion of $Y^{\gamma_{1},\gamma_{2}}(0)$ like (\ref{Taylor-expan-express}) and
(\ref{Taylor-expan-higher-order}) can be obtained. We omit the proof for the
lack of space.
\end{remark}

Recalling $\mathcal{E}_{\gamma_{1},\gamma_{2}}[\xi] = Y^{\gamma_{1},\gamma_{2}}(0)$,
the remaining mission is to interpret the right-hand side of (\ref{Taylor-expan-express}) from a viewpoint of mean-variance representation.
To this end, we shall introduce a characterization for a variance decomposition on the space of all square integrable random variables as follows.

Let $m,n$ be two positive integers. $\{ \mathcal{F}^{i}, i=1,\ldots,m \}$ are
sub-$\sigma$ fields of $\mathcal{F}$ and they are independent of each other.
Set $\mathcal{G}=\bigvee_{i=1}^{m}\mathcal{F}^{i}$ and it is well known that
$L^{2}(\mathcal{G};\mathbb{R}^{n})=\mathbb{R}^{n} \oplus L_{0}^{2}%
(\mathcal{G};\mathbb{R}^{n})$, $L^{2}(\mathcal{F}^{i};\mathbb{R}%
^{n})=\mathbb{R}^{n} \oplus L_{0}^{2}(\mathcal{F}^{i};\mathbb{R}^{n})$, where
\[%
\begin{array}
[c]{l}%
L_{0}^{2}(\mathcal{G};\mathbb{R}^{n}):=\left\{  \xi\in L^{2}(\mathcal{G}%
;\mathbb{R}^{n}):\mathbb{E}[\xi]=0\right\}  ,\\
L_{0}^{2}(\mathcal{F}^{i};\mathbb{R}^{n}):=\left\{  \xi\in L^{2}%
(\mathcal{F}^{i};\mathbb{R}^{n}):\mathbb{E}[\xi]=0\right\}  ,i=1,\ldots,m.
\end{array}
\]
We simply write $L^{2}(\mathcal{G};\mathbb{R}^{n})$, $L^{2}(\mathcal{F}%
^{i};\mathbb{R}^{n})$, $L_{0}^{2}(\mathcal{G};\mathbb{R}^{n})$, $L_{0}%
^{2}(\mathcal{F}^{i};\mathbb{R}^{n})$ with $L^{2}(\mathcal{G})$,
$L^{2}(\mathcal{F}^{i})$, $L_{0}^{2}(\mathcal{G})$, $L_{0}^{2}(\mathcal{F}%
^{i})$ respectively unless the dimension of the space needs to be indicated.
In addition, for any closed linear subspace $\mathcal{L} \subset
L^{2}(\mathcal{G})$, we denote by $P_{\mathcal{L}}$ the projection operator
from $L^{2}(\mathcal{G})$ upon $\mathcal{L}$ and by $\mathcal{L}^{\perp}$ the
orthogonal complement of $\mathcal{L}$ with respect to $L^{2}(\mathcal{G})$.

\begin{definition}
\label{def-var-decomp} A set of functionals $\{\mathrm{D}%
_{i},i=1,\ldots,m\}$ is called a variance decomposition on $L^{2}(\mathcal{G})$, if for
any $\xi\in L^{2}(\mathcal{G})$
\begin{equation}
\label{eq-var-decomp}\mathrm{Var}[\xi] = \sum_{i=1}^{m}\mathrm{D}_{i}\left[
\xi\right] ,
\end{equation}
where $\mathrm{D}_{i}:L^{2}(\mathcal{G}) \longmapsto\mathbb{R}$ satisfies
following axiomatic assumptions:

\begin{enumerate}

\item[(A1)] $\mathrm{D}_{i}[a \xi+ c]=a^{2}\mathrm{D}_{i}[\xi], \forall a
\in\mathbb{R}, c \in\mathbb{R}^{n}$;

\item[(A2)] $\forall\{\xi_{k}\}_{k \in\mathbb{N}_{+}} \subset L_{0}%
^{2}(\mathcal{G}), \xi\in L_{0}^{2}(\mathcal{G})$, if $\lim_{k\rightarrow
\infty}\mathbb{E}\left[  \left\vert \xi_{k}-\xi\right\vert ^{2} \right] =0$
then $$\lim_{k\rightarrow\infty}\mathrm{D}_{i}[\xi_{k}]=\mathrm{D}_{i}[\xi];$$

\item[(A3)] $\mathrm{D}_{i}[\xi] = \mathrm{Var}[\xi],\forall\xi\in L_{0}%
^{2}(\mathcal{F}^{i})$;

\item[(A4)] $\mathrm{D}_{i}[\xi] = 0, \forall\xi\in L_{0}^{2}(\mathcal{F}%
^{j}),j \neq i$;

\item[(A5)] there exists a closed linear subspace $\mathcal{L}_{i} \supset
L_{0}^{2}(\mathcal{F}^{i})$ and $\mathcal{L}_{i}^{\perp} \supset\bigcup_{j
\neq i} L_{0}^{2}(\mathcal{F}^{j})$ such that
\[
\forall\xi, \eta\in L_{0}^{2}(\mathcal{G}), \ \ \text{if}\ \ \mathrm{Cov}
\left[  P_{\mathcal{L}_{i}}(\xi), P_{\mathcal{L}_{i}}(\eta) \right]  = 0
\ \ \text{then}\ \ \mathrm{D}_{i}[\xi+ \eta] = \mathrm{D}_{i}[\xi] +
\mathrm{D}_{i}[\eta].
\]

\end{enumerate}
\end{definition}

\begin{remark}
From Definition \ref{def-var-decomp}, if $\{\mathrm{D}_{i},i=1,\ldots,m\}$ is
a variance decomposition on $L^{2}(\mathcal{G})$ then it is easy to check that
axiomatic assumptions (A2), (A3), (A4) also hold when $L_{0}^{2}(\mathcal{G}%
)$, $L_{0}^{2}(\mathcal{F}^{i})$, $L_{0}^{2}(\mathcal{F}^{j})$ are replaced by
$L^{2}(\mathcal{G})$, $L^{2}(\mathcal{F}^{i})$, $L(\mathcal{F}^{j})$
respectively, and (A5) holds for any $\xi, \eta\in L^{2}(\mathcal{G})$. In
particular, it follows from (A1), (A4), (A5) that
\[
\mathrm{D}_{i}[\xi+ \eta] = \mathrm{D}_{i}[\xi],\ \ \forall\xi\in
L^{2}(\mathcal{G}),\eta\in L^{2}(\mathcal{F}^{j}),j \neq i.
\]

\end{remark}

The following proposition provide an approach to constructing a variance decomposition on $L^{2}(\mathcal{G})$.

\begin{proposition}
\label{prop-suffi-decompose} Suppose that $L_{0}^{2}(\mathcal{G})$ admits an
orthogonal direct sum:
\[
L_{0}^{2}(\mathcal{G})=\mathcal{L}_{1} \oplus\cdots\oplus\mathcal{L}_{m}
\]
with $m$ closed linear subspaces such that $\mathcal{L}_{1} \supset L_{0}%
^{2}(\mathcal{F}^{1}), \ldots, \mathcal{L}_{m} \supset L_{0}^{2}%
(\mathcal{F}^{m})$. For $i=1,\ldots,m$, define a functional $\mathrm{D}%
_{i}:L^{2}(\mathcal{G}) \longmapsto\mathbb{R}$ such that for any $\xi\in
L^{2}(\mathcal{G})$ with the orthogonal decomposition $\xi=\mathbb{E}%
[\xi]+\sum_{i=1}^{m}\xi_{i}$,
\begin{equation}
\label{suffi-construct-var}\mathrm{D}_{i}\left[  \xi\right] :=\mathrm{Var}%
\left[  \xi_{i} \right] , \ \xi_{i} \in\mathcal{L}_{i},
\end{equation}
then $\{\mathrm{D}_{i},i=1,\ldots,m\}$ is a variance decomposition on
$L^{2}(\mathcal{G})$.
\end{proposition}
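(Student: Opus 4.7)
The plan is to verify directly that the functionals defined by (\ref{suffi-construct-var}) satisfy the defining identity (\ref{eq-var-decomp}) together with all five axioms (A1)–(A5), leveraging throughout the uniqueness of the orthogonal decomposition $\xi = \mathbb{E}[\xi] + \sum_{i=1}^{m}\xi_{i}$ with $\xi_{i}\in\mathcal{L}_{i}\subset L_{0}^{2}(\mathcal{G})$. Since each $\xi_{i}$ is centered, $\mathrm{D}_{i}[\xi]=\mathrm{Var}[\xi_{i}]=\mathbb{E}[|\xi_{i}|^{2}]$, and the pairwise orthogonality of the $\mathcal{L}_{j}$'s inside $L^{2}(\mathcal{G})$ makes Pythagoras the engine behind almost every step.

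First I would prove (\ref{eq-var-decomp}): from $\xi-\mathbb{E}[\xi]=\sum_{i=1}^{m}\xi_{i}$ and mutual orthogonality, $\mathrm{Var}[\xi]=\mathbb{E}\bigl[|\xi-\mathbb{E}[\xi]|^{2}\bigr]=\sum_{i=1}^{m}\mathbb{E}[|\xi_{i}|^{2}]=\sum_{i=1}^{m}\mathrm{D}_{i}[\xi]$. Next, for (A1), note that $a\xi+c=(a\mathbb{E}[\xi]+c)+\sum_{j}(a\xi_{j})$ is again an orthogonal decomposition with $a\xi_{j}\in\mathcal{L}_{j}$ (each $\mathcal{L}_{j}$ is a linear subspace), so uniqueness yields $\mathrm{D}_{i}[a\xi+c]=\mathrm{Var}[a\xi_{i}]=a^{2}\mathrm{D}_{i}[\xi]$. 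For (A2), the map $\xi\mapsto\xi_{i}$ equals the orthogonal projection $P_{\mathcal{L}_{i}}$ on $L_{0}^{2}(\mathcal{G})$, which is a bounded linear operator; hence $L^{2}$-convergence of $\xi_{k}\to\xi$ transfers to $L^{2}$-convergence of the components, and $\mathrm{Var}$ is continuous in $L^{2}$.

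For (A3) and (A4), I would use the hypothesis $\mathcal{L}_{i}\supset L_{0}^{2}(\mathcal{F}^{i})$. If $\xi\in L_{0}^{2}(\mathcal{F}^{i})$, then placing $\xi$ entirely in the $i$-th slot and zeros elsewhere gives an orthogonal decomposition of $\xi$, so by uniqueness $\xi_{i}=\xi$ and $\xi_{j}=0$ for $j\neq i$; this yields (A3) immediately and (A4) follows by symmetry (for $\xi\in L_{0}^{2}(\mathcal{F}^{j})\subset\mathcal{L}_{j}$, the $i$-th component vanishes). For (A5), I take the same $\mathcal{L}_{i}$ supplied by the direct-sum hypothesis: since $L_{0}^{2}(\mathcal{G})=\mathcal{L}_{1}\oplus\cdots\oplus\mathcal{L}_{m}$ is orthogonal, $\mathcal{L}_{i}^{\perp}\cap L_{0}^{2}(\mathcal{G})\supset\bigoplus_{j\neq i}\mathcal{L}_{j}\supset\bigcup_{j\neq i}L_{0}^{2}(\mathcal{F}^{j})$, which gives the required inclusions. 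The additivity clause follows because $\mathrm{Cov}[P_{\mathcal{L}_{i}}(\xi),P_{\mathcal{L}_{i}}(\eta)]=\mathbb{E}[\xi_{i}\eta_{i}]=0$ forces
\[
\mathrm{D}_{i}[\xi+\eta]=\mathrm{Var}[\xi_{i}+\eta_{i}]=\mathbb{E}[|\xi_{i}|^{2}]+2\mathbb{E}[\xi_{i}\eta_{i}]+\mathbb{E}[|\eta_{i}|^{2}]=\mathrm{D}_{i}[\xi]+\mathrm{D}_{i}[\eta].
\]

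I do not expect any serious technical obstacle; the whole argument is a careful bookkeeping exercise built on orthogonal projections. The only subtle point is making sure that the projection $P_{\mathcal{L}_{i}}$ appearing in (A5) matches the $i$-th component $\xi_{i}$ of the orthogonal decomposition on $L_{0}^{2}(\mathcal{G})$ — this is immediate from the direct-sum hypothesis and the fact that $P_{\mathcal{L}_{i}}\mathbb{E}[\xi]=0$ (constants are in the orthogonal complement of $L_{0}^{2}(\mathcal{G})$, hence in $\mathcal{L}_{i}^{\perp}$), so $P_{\mathcal{L}_{i}}\xi=P_{\mathcal{L}_{i}}(\xi-\mathbb{E}[\xi])=\xi_{i}$. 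Once this identification is stated, the five axioms drop out in order without further difficulty.
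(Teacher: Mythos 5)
Your proof is correct and follows essentially the same route as the paper's: the paper likewise declares (\ref{eq-var-decomp}) and (A1)--(A4) immediate from the uniqueness of the orthogonal decomposition and only spells out (A5) via the identity $\mathrm{Cov}[P_{\mathcal{L}_i}(\xi),P_{\mathcal{L}_i}(\eta)]=\mathbb{E}[\xi_i\eta_i]=0$ and the resulting additivity of $\mathrm{Var}[\xi_i+\eta_i]$. You merely fill in the routine verifications that the paper omits, including the useful observation that $P_{\mathcal{L}_i}\xi=\xi_i$ because constants lie in $\mathcal{L}_i^{\perp}$.
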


\begin{proof}
$\forall \xi \in L^{2}(\mathcal{G})$, (\ref{eq-var-decomp}) is obvious due to (\ref{suffi-construct-var}). It is not difficult to verify (A1)-(A4) so we only check (A5). For any $\xi, \eta \in L_{0}^{2}(\mathcal{G})$ with
\[
\xi=\sum_{i=1}^{m}\xi_{i},\quad \eta=\sum_{i=1}^{m}\eta_{i}, \quad \xi_{i}, \eta_{i} \in \mathcal{L}_{i}.
\]
Since
$
\mathrm{Cov} \left[ \xi_{i}, \eta_{i} \right] = \mathrm{Cov} \left[ P_{\mathcal{L}_{i}}(\xi), P_{\mathcal{L}_{i}}(\eta) \right] =  0
$,
then it follows from (\ref{suffi-construct-var}) immediately that
\begin{equation}
\label{cov-proj-zero}
\mathrm{D}_{i}[\xi + \eta] = \mathrm{Var}[\xi_{i} + \eta_{i}] = \mathrm{Var}[\xi_{i}] + \mathrm{Var}[\eta_{i}] = \mathrm{D}_{i}[\xi] + \mathrm{D}_{i}[\eta],
\end{equation}
which completes the proof.
\end{proof}

Thanks to Proposition \ref{prop-suffi-decompose}, the following example is
helpful for us to interpret how a decision maker measures the risks stemming
from different risk sources and weights each of them with her asymmetric
risk-sensitive parameters.

\begin{example}
\label{example-2} Put $m=d$. For $i=1,\ldots,d$, let $\mathbb{F}%
^{i}=\{\mathcal{F}_{t}^{i}: 0 \leq t \leq T\}$ where $\mathcal{F}_{t}%
^{i}:=\sigma(W_{i}(s): 0 \leq s \leq t)$. Denote by $L_{\mathbb{F}^{i}}%
^{2}([0,T];\mathbb{R}^{n})$ the subspace of $L_{\mathbb{F}}^{2}%
([0,T];\mathbb{R}^{n})$ such that any process in that is $\mathbb{F}^{i}$-adapted.
For $i=1,\ldots,d$, take $\mathcal{F}^{i}=\mathcal{F}_{T}^{i}$. It is obvious
that $\mathcal{F}^{i}, i=1,\ldots,d$ are mutually independent and
$\mathcal{G}:=\bigvee_{i=1}^{d}\mathcal{F}^{i}=\mathcal{F}_{T}$. On the one
hand, for any $\xi\in L_{0}^{2}(\mathcal{F}_{T})$, according to the martingale
representation theorem, there exist $\varphi_{i}(\cdot)\in L_{\mathbb{F}}%
^{2}([0,T];\mathbb{R}^{n}),i=1,\ldots,d$ uniquely such that
\begin{equation}
\label{xi-martingale-repre}\xi= \sum_{i=1}^{d}\int_{0}^{T} \varphi
_{i}(s)dW_{i}(s).
\end{equation}
Since $L_{\mathbb{F}}^{2}([0,T];\mathbb{R}^{n})$ is complete and $W_{i}$ is
independent of $W_{j}$ when $1 \leq i \neq j \leq d$, we have for any
$\varphi(\cdot), \psi(\cdot)\in L_{\mathbb{F}}^{2}([0,T];\mathbb{R})$,
\begin{equation}
\label{inner-product-zero}\mathbb{E}\left[  \int_{0}^{T}\varphi(s)dW_{i}%
(s)\cdot\int_{0}^{T}\psi(s)dW_{j}(s)\right]  =0,1\leq i\neq j\leq d.
\end{equation}
Then it follows from (\ref{xi-martingale-repre}) and (\ref{inner-product-zero}%
) that $L_{0}^{2}(\mathcal{F}_{T})$ admits an orthogonal direct sum
\[
L_{0}^{2}(\mathcal{F}_{T})=\mathcal{L}_{1} \oplus\cdots\oplus\mathcal{L}_{d}
\]
such that
\begin{equation}
\label{eq-orthogonal-direct-sum}\mathcal{L}_{i}=\left\{  \int_{0}^{T}%
\varphi(s)dW_{i}(s):\varphi(\cdot)\in L_{\mathbb{F}}^{2}([0,T];\mathbb{R}%
^{n})\right\}  ,i=1,\ldots d.
\end{equation}
On the other hand, for $i=1,\ldots,d$, applying the martingale representation
theorem to any $\xi\in L_{0}^{2}(\mathcal{F}_{T}^{i})$ yields
\begin{equation}
\label{var-basis-space-Li}L_{0}^{2}(\mathcal{F}_{T}^{i})=\left\{  \int_{0}%
^{T}\varphi(s)dW_{i}(s):\varphi(\cdot)\in L_{\mathbb{F}^{i}}^{2}([0,T];\mathbb{R}%
^{n})\right\}  ,i=1,\ldots d.
\end{equation}
Obviously $L_{0}^{2}(\mathcal{F}_{T}^{i}) \subset\mathcal{L}_{i}, i=1,\ldots
d$. Due to Proposition \ref{prop-suffi-decompose}, we can construct a variance
decomposition $\{\mathrm{D}_{i},i=1,\ldots,m\}$ on $L^{2}(\mathcal{F}_{T})$
such that for any $\xi\in L^{2}(\mathcal{F}_{T})$ with the orthogonal
decomposition
\[
\xi= \mathbb{E}[\xi] + \sum_{i=1}^{d}\int_{0}^{T} \varphi_{i}(s)dW_{i}%
(s),\ \ \varphi_{i}(\cdot)\in L_{\mathbb{F}}^{2}([0,T];\mathbb{R}^{n}),i=1,\ldots,d,
\]
we have
\[
\mathrm{D}_{i}[\xi]=\mathbb{E}\left[  \int_{0}^{T}\left\vert \varphi
_{i}(s)\right\vert ^{2}ds\right]  ,\text{ }i=1,\ldots d
\]
and $\mathrm{Var}[\xi]=\sum_{i=1}^{d}\mathrm{D}_{i}[\xi]$.
\end{example}

%Let $\mathcal{E}_{\gamma_{1},\gamma_{2}}$ be the quadratic $\mathbb{F}$-consistent nonlinear expectation
%satisfying (\ref{quadratic-g-expectation}) when $(v_{1},v_{2})=(\gamma_{1},\gamma_{2})$ and $\mathrm{Dom}(\mathcal{E}_{\gamma_{1},\gamma_{2}})$ is its domain.
With the help of Proposition \ref{prop-suffi-decompose} and Example \ref{example-2}, 
we can obtain the following mean-variance representation of the asymmetric risk-sensitive criterion introduced in Definition \ref{def-asymmetric-criterion}.

\begin{theorem}
\label{thm-mean-var-repre}  Under Assumption
\ref{Taylor-terminal-condition}, for any $\xi \in \mathrm{Dom}(\mathcal{E}_{\gamma_{1},\gamma_{2}})$ we have
\begin{equation}
\mathcal{E}_{\gamma_{1},\gamma_{2}}[\xi]=\mathbb{E}[\xi]+\gamma_{1}%
\mathrm{D}_{1}[\xi]+\gamma_{2}\mathrm{D}_{2}[\xi]+o\left(  \sqrt{\gamma
_{1}^{2}+\gamma_{2}^{2}}\right)  ,
\label{Taylor-expan-interpret-g-expectation}%
\end{equation}
where $\{D_{i},i=1,2\}$ is a variance decomposition on $L^{2}(\mathcal{F}_{T};\mathbb{R})$  such that
$\mathrm{Var}[\xi] = \mathrm{D}_{1}[\xi] + \mathrm{D}_{2}[\xi]$.
\end{theorem}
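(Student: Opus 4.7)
The strategy is to take the first-order Taylor expansion for $Y^{\gamma_{1},\gamma_{2}}(0)$ already established in Theorem \ref{thm-Taylor-expan} and reinterpret the three explicit coefficients $\bar{Y}(0)$, $Y_{1}(0)$, $Y_{2}(0)$ in probabilistic terms. Since $\mathcal{E}_{\gamma_{1},\gamma_{2}}[\xi]=Y^{\gamma_{1},\gamma_{2}}(0)$ by definition and the remainder in (\ref{Taylor-expan-express}) is already $o(\sqrt{\gamma_{1}^{2}+\gamma_{2}^{2}})$, the only real work is to recognise that $\bar{Y}(0)=\mathbb{E}[\xi]$ and $Y_{i}(0)=\mathrm{D}_{i}[\xi]$ for $i=1,2$, with $\{\mathrm{D}_{1},\mathrm{D}_{2}\}$ the variance decomposition constructed in Example \ref{example-2}.

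First I would deal with the zeroth-order coefficient. Under Assumption \ref{Taylor-terminal-condition} we have $\xi\in L^{2}(\mathcal{F}_{T})$ (since $e^{16|\xi|}\in L^{1}$), and (\ref{Taylor-opti-eq}) tells us that $\bar{Y}$ has zero driver and terminal $\xi$. Hence $\bar{Y}(t)=\mathbb{E}[\xi\mid\mathcal{F}_{t}]$ is the Brownian martingale generated by $\xi$, so $\bar{Y}(0)=\mathbb{E}[\xi]$ and integrating (\ref{Taylor-opti-eq}) up to time $T$ yields
\[
\xi=\mathbb{E}[\xi]+\int_{0}^{T}\bar{Z}_{1}(s)\,dW_{1}(s)+\int_{0}^{T}\bar{Z}_{2}(s)\,dW_{2}(s).
\]
Uniqueness of the Itô representation then identifies $\bar{Z}_{i}$ with the integrands $\varphi_{i}$ appearing in (\ref{xi-martingale-repre}), so the variance decomposition of Example \ref{example-2} reads
\[
\mathrm{D}_{i}[\xi]=\mathbb{E}\!\left[\int_{0}^{T}|\bar{Z}_{i}(s)|^{2}\,ds\right],\quad i=1,2,
\]
and $\mathrm{Var}[\xi]=\mathrm{D}_{1}[\xi]+\mathrm{D}_{2}[\xi]$ follows from the Itô isometry together with the independence of $W_{1}$ and $W_{2}$.

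Next I would treat the linear coefficients. Taking expectations on both sides of (\ref{Taylor-expan-2}) and using that the stochastic integrals of $Z_{i1},Z_{i2}$ are true martingales (their $L_{\mathbb{F}}^{2}([0,T])$-integrability being supplied by the standard $L^{2}$-BSDE theory applied to (\ref{Taylor-expan-2}) and by the estimate (\ref{Taylor-state-est}) for $\bar{Z}_{i}$), I obtain
\[
Y_{i}(0)=\mathbb{E}\!\left[\int_{0}^{T}|\bar{Z}_{i}(s)|^{2}\,ds\right]=\mathrm{D}_{i}[\xi],\quad i=1,2.
\]
Substituting $\bar{Y}(0)=\mathbb{E}[\xi]$ and $Y_{i}(0)=\mathrm{D}_{i}[\xi]$ into (\ref{Taylor-expan-express}) then delivers (\ref{Taylor-expan-interpret-g-expectation}).

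There is no substantial obstacle here; the analytic effort was already absorbed by Theorem \ref{thm-Taylor-expan}. The only points deserving care are the identification of the $\bar{Z}_{i}$ appearing in (\ref{Taylor-opti-eq}) with the martingale-representation integrands used to define $\mathrm{D}_{i}$, which rests on the uniqueness part of the Brownian martingale representation theorem applied to the martingale $\bar{Y}(\cdot)=\mathbb{E}[\xi\mid\mathcal{F}_{\cdot}]$, and the verification that $\xi\in\mathrm{Dom}(\mathcal{E}_{\gamma_{1},\gamma_{2}})\subset L^{2}(\mathcal{F}_{T})$ so that Example \ref{example-2} applies, which is immediate from Assumption \ref{Taylor-terminal-condition}.
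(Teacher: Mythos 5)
Your proposal is correct and follows essentially the same route as the paper: both use the expansion $Y^{\gamma_{1},\gamma_{2}}(0)=\bar{Y}(0)+\gamma_{1}Y_{1}(0)+\gamma_{2}Y_{2}(0)+o(\sqrt{\gamma_{1}^{2}+\gamma_{2}^{2}})$ from Theorem \ref{thm-Taylor-expan}, identify $\bar{Y}(0)=\mathbb{E}[\xi]$ and $Y_{i}(0)=\mathbb{E}[\int_{0}^{T}|\bar{Z}_{i}(t)|^{2}dt]$, and match the $\bar{Z}_{i}$ with the martingale-representation integrands defining $\mathrm{D}_{i}$ in Example \ref{example-2} via uniqueness of that representation. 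The extra details you supply (the martingale property of the stochastic integrals in (\ref{Taylor-expan-2}) and the It\^{o}-isometry verification of $\mathrm{Var}[\xi]=\mathrm{D}_{1}[\xi]+\mathrm{D}_{2}[\xi]$) are consistent with what the paper leaves implicit.
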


\begin{proof}
For any $\xi \in \mathrm{Dom}(\mathcal{E}_{\gamma_{1},\gamma_{2}})$, it follows from (\ref{quadratic-g-expectation}) that
$
\mathcal{E}_{\gamma_{1},\gamma_{2}}[\xi]=Y^{\gamma_{1},\gamma_{2}}(0).
$
On the one hand, according to (\ref{Taylor-opti-eq}) and (\ref{Taylor-expan-2}), we have
\[
\bar{Y}(0)=\mathbb{E}[\xi],\text{ }Y_{1}(0)=\mathbb{E}\left[  \int%
_{0}^{T}\bar{Z}_{1}^{2}(t)dt\right]  ,\text{ }Y_{2}(0)=\mathbb{E}%
\left[  \int_{0}^{T}\bar{Z}_{2}^{2}(t)dt\right]  ,
\]
and
\[
\xi=\mathbb{E}[\xi]+\int_{0}^{T}\bar{Z}_{1}(t)dW_{1}(t)+\int_{0}^{T}\bar{Z}%
_{2}(t)dW_{2}(t).
\]
On the other hand, %from Remark \ref{rmr-general-var-property},
from Example \ref{example-2} and the uniqueness of the martingale representation of $\xi$, we have
\[
\mathrm{D}_{1}[\xi]=\mathbb{E}\left[  \int_{0}^{T}\bar{Z}_{1}^{2}(t)dt\right]  ,\text{
}\mathrm{D}_{2}[\xi]=\mathbb{E}\left[  \int_{0}^{T}\bar{Z}_{2}^{2}(t)dt\right]  .
\]
Combining the above relationships with (\ref{Taylor-expan-express}) yields (\ref{Taylor-expan-interpret-g-expectation}).
\end{proof}

We interpret (\ref{Taylor-expan-interpret-g-expectation}) from the perspective in finance. As it is mentioned in \cite{Hu-Ma-Peng-Yao}, the left hand side of (\ref{Taylor-expan-interpret-g-expectation}),
$\mathcal{E}_{\gamma_{1},\gamma_{2}}[\xi]$, can be understood as a convex risk
measure about the derivative $\xi$ (maybe some future or some option contract)
based on the underlying asset $X$ adapted to the filtration $\mathbb{F}$
generated by $(W_{1},W_{2})$. The total risk measure of $\xi$ is
decomposed into three main parts---the right hand side of
(\ref{Taylor-expan-interpret-g-expectation}). For a decision maker, as
$\gamma_{1} \neq\gamma_{2}$ represents her asymmetric risk-sensitive attitudes
toward two different risk sources $W_{1}$ and $W_{2}$, in her criterion she
needs to distinguish the risks stemming from $W_{i},i=1,2$ so that
$\mathrm{Var}[\xi]$ is decomposed into $\mathrm{D}_{1}[\xi]$ and
$\mathrm{D}_{2}[\xi]$, and for $i=1,2$ she weights $\mathrm{D}_{i}[\xi]$ with
$\gamma_{i}$. %to improve her risk situations corresponding to $W_{1}$, $W_{2}$ separately.

\begin{corollary}
If $\gamma_{1} = \gamma_{2} = \frac{\theta}{2}>0$ then
(\ref{Taylor-expan-interpret-g-expectation}) becomes
\[
\mathcal{E}_{\theta}[\xi]= \mathbb{E}[\xi]+\frac{\theta}{2} \mathrm{Var}%
[\xi]+o\left( \theta\right)  , \ \forall \xi \in \mathrm{Dom}(\mathcal{E}_{\theta}),
\]
which is in accordance with the mean-variance representation as
(\ref{Taylor-expan-sym}) in symmetric risk-sensitive problem.
\end{corollary}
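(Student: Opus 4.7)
The plan is to derive the corollary by specializing the mean-variance representation of Theorem \ref{thm-mean-var-repre} to the symmetric case $\gamma_{1}=\gamma_{2}=\theta/2$, together with a brief identification of $\mathcal{E}_{\theta/2,\theta/2}$ with the classical risk-sensitive operator $\mathcal{E}_{\theta}$.

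First I would record the identification $\mathcal{E}_{\theta/2,\theta/2}[\xi]=\mathcal{E}_{\theta}[\xi]$ on $\mathrm{Dom}(\mathcal{E}_{\theta})$. This is the content noted in the introduction: when $\Gamma=\frac{\theta}{2}\mathrm{I}_{2\times 2}$, the quadratic BSDE (\ref{asymmetric-criterion-eq}) reduces to the driver $\frac{\theta}{2}|Z(t)|^{2}$, and the exponential transformation $\tilde Y(t):=e^{\theta Y^{\theta/2,\theta/2}(t)}$ yields, by It\^o's formula, a local martingale that is a true martingale under Assumption \ref{Taylor-terminal-condition} (since $\mathbb{E}[e^{16|\xi|}]<+\infty$ is more than enough to control the stochastic exponential). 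Taking expectations gives $\tilde Y(0)=\mathbb{E}[e^{\theta\xi}]$, whence $Y^{\theta/2,\theta/2}(0)=\frac{1}{\theta}\log\mathbb{E}[e^{\theta\xi}]=\mathcal{E}_{\theta}[\xi]$.

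Next I would substitute $\gamma_{1}=\gamma_{2}=\theta/2$ into the expansion (\ref{Taylor-expan-interpret-g-expectation}). Since $\{\mathrm{D}_{1},\mathrm{D}_{2}\}$ is a variance decomposition, one has $\mathrm{D}_{1}[\xi]+\mathrm{D}_{2}[\xi]=\mathrm{Var}[\xi]$, so
\[
\gamma_{1}\mathrm{D}_{1}[\xi]+\gamma_{2}\mathrm{D}_{2}[\xi]=\tfrac{\theta}{2}\bigl(\mathrm{D}_{1}[\xi]+\mathrm{D}_{2}[\xi]\bigr)=\tfrac{\theta}{2}\mathrm{Var}[\xi].
\]
The remainder becomes $o(\sqrt{\gamma_{1}^{2}+\gamma_{2}^{2}})=o(\theta/\sqrt{2})=o(\theta)$ as $\theta\downarrow 0$. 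Combining these two observations with the identification from the first step yields the claimed representation
\[
\mathcal{E}_{\theta}[\xi]=\mathbb{E}[\xi]+\tfrac{\theta}{2}\mathrm{Var}[\xi]+o(\theta).
\]

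There is no real obstacle here; the only point requiring a moment of care is the identification $\mathcal{E}_{\theta/2,\theta/2}=\mathcal{E}_{\theta}$ on the common domain. In the Taylor expansion (\ref{Taylor-expan-interpret-g-expectation}) we work in the regime $\gamma_{i}\in[0,1]$, and letting $\theta\downarrow 0$ stays inside this regime, so all estimates from Lemma \ref{lem-Taylor-state-uniform-bound} and Theorem \ref{thm-Taylor-expan} apply uniformly, which justifies passing the $o(\cdot)$ statement from the variables $(\gamma_{1},\gamma_{2})$ to the single parameter $\theta$.
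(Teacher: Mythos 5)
Your proposal is correct and follows essentially the same route the paper intends: specialize Theorem \ref{thm-mean-var-repre} to $\gamma_{1}=\gamma_{2}=\theta/2$, sum the variance decomposition $\mathrm{D}_{1}[\xi]+\mathrm{D}_{2}[\xi]=\mathrm{Var}[\xi]$, and note $o(\sqrt{\gamma_{1}^{2}+\gamma_{2}^{2}})=o(\theta)$. The paper states the corollary without proof, treating the identification $\mathcal{E}_{\theta/2,\theta/2}=\mathcal{E}_{\theta}$ via the exponential transformation as already established in the introduction, so your explicit verification of that step (using the exponential integrability of $\sup_{t}|Y(t)|$ from Lemma \ref{lem-Taylor-state-uniform-bound} to get a true martingale) is a welcome but not substantively different addition.
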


\section{Asymmetric risk-sensitive control problems}

Based on Theorem \ref{thm-mean-var-repre}, it is reasonable and suitable to employ 
(\ref{intro-obje-recursive}) with (\ref{intro-FBSDE-state-eq}) to
describe asymmetric risk-sensitive control problems, when $\Gamma$ are only strictly positive definite.

\subsection{Problem formulation}

We formulate the asymmetric risk-sensitive stochastic control problems as
follows. Consider the control system
\begin{equation}
\left\{
\begin{array}
[c]{rl}%
dX(t)= & b\left(  t,X(t),u(t)\right)  dt+\sigma(t,X(t),u(t))dW(t),\\
dY(t)= & -[Z^{\intercal}(t)\Gamma Z(t)+f(t,X(t),u(t))]dt+Z^{\intercal
}(t)dW(t),\\
X(0)= & x_{0},\ Y(T)=\Phi(X(T)),
\end{array}
\right.  \label{state-eqy0}%
\end{equation}
where $x_{0} \in\mathbb{R}^{n}$, $U\subset\mathbb{R}^{k}$ is a non-empty set, the $U$-valued process $u(\cdot)$ is the control process that will be defined later, and the coefficients
{
\small \[
b:[0,T]\times\mathbb{R}^{n}\times\mathbb{R}^{k} \rightarrow\mathbb{R}^{n},
\sigma:[0,T]\times\mathbb{R}^{n}\times\mathbb{R}^{k} \rightarrow
\mathbb{R}^{n\times d}, g: [0,T]\times\mathbb{R}^{n}\times\mathbb{R}%
^{k}\rightarrow\mathbb{R}, \Phi:\mathbb{R}^{n} \rightarrow\mathbb{R}
\]
}are measurable functions. $\Gamma\in\mathbb{S}^{d \times d}$ is strictly positive
definite. In (\ref{state-eqy0}), an $\mathbb{F}$-progressively measurable process $u(\cdot)$ is called an admissible control if the SDE admits a unique solution $X(\cdot)$ and the BSDE admits a minimal solution 
\footnote[1]{The general definition of the minimal solution of a quadratic BSDE can be found in \cite{Briand-H-06}. For the convenience of readers, here we only make the explanation about the minimal solution for 
the quadratic BSDE in (\ref{state-eqy0}).
A solution $(Y(\cdot),Z(\cdot))$ is said to be minimal if $\mathbb{P}$-a.s. for each $t \in [0,T]$, $Y(t) \leq Y^{\prime}(t)$ whenever $\left(Y^{\prime}(\cdot),Z^{\prime}(\cdot)\right)$ 
is another solution. Moreover, $(Y(\cdot),Z(\cdot))$ is said to be minimal in some space $\mathcal{B}$
if it belongs to this space and the previous property holds true as soon as $\left(Y^{\prime}(\cdot),Z^{\prime}(\cdot)\right) \in \mathcal{B}$.
Particularly, the minimal solution will become the unique solution while the BSDE in (\ref{state-eqy0}) is well posed.}
$(Y(\cdot),Z(\cdot))$ in specific spaces. Denote by $\mathcal{U}[0,T]$ the set of all the admissible controls. 
We'd like to emphasize that $\mathcal{U}[0,T]$ will be more specific according to the different cases we studied in the following context.
The cost functional is defined by
\begin{equation}
J(u(\cdot)):= Y(0), \ u(\cdot) \in \mathcal{U}[0,T].
\label{recursive-criterion}
\end{equation}
The objective is to find $\bar{u}(\cdot) \in\mathcal{U}[0,T]$ (if it ever exists) such that
\begin{equation}
J(\bar{u}(\cdot))=\inf\limits_{u(\cdot)\in\ \mathcal{U}[0,T]}J(u(\cdot)).
\label{obje-eq-y0}%
\end{equation}

Let us review some pioneering works about the risk-sensitive control
problems in details. Consider the risk-sensitive control problem with the cost
functional (\ref{intro-exp-objec}) with state equation
(\ref{intro-fward-state-eq}). The problem that minimizing
(\ref{intro-exp-objec}) subject to (\ref{intro-fward-state-eq}) was studied by
Lim and Zhou \cite{LimZhou05} in two cases. On the one hand, under the
assumption that $(\Phi,f)$ is uniformly bounded and the corresponding value
function defined from (\ref{intro-exp-objec}) is sufficiently smooth, they
obtained a new risk-sensitive MP for (\ref{intro-exp-objec})-(\ref{intro-fward-state-eq}). On the other hand, when $(\Phi,f)$ is no
longer bounded, the authors studied the risk-sensitive LQ
problems by taking
\begin{equation}
\label{prelim-LQ-coeff}%
\begin{array}
[c]{rl}%
b(t,x,u)= & A(t)x+B(t)u,\\
\sigma(t,x,u)= & \Sigma(t),\\
\Phi(x)= & \frac{1}{2}x^{\intercal}Hx,\\
f(t,x,u)= & \frac{1}{2}x^{\intercal}M(t)x+\frac{1}{2}u^{\intercal}N(t)u,
\end{array}
\end{equation}
where $A$, $B$, $\Sigma$, $H$, $M$, $N$ are respectively matrices or
matrix-valued, deterministic function on $[0,T]$ in suitable sizes. The
authors obtained the optimal control in the feedback form by verifying
sufficient conditions for optimality. After that, using neither stochastic MP nor DPP, a completion-of-square
approach is adopted by Duncan in \cite{Duncan} to solve this kind of problems,
so one can get rid of the assumption that the value function is sufficiently
smooth in, as called in \cite{Duncan}, the linear-exponential-quadratic Gaussian case.
Based on this review, in our new formulation (\ref{state-eqy0})-(\ref{obje-eq-y0}), we will primarily focus on bounded $(\Phi,f)$ and (\ref{prelim-LQ-coeff})
during the upcoming investigation.

\subsection{Asymmetric risk-sensitive control under bounded conditions} 

In our problem formulation, if $(\Phi,f)$ is uniformly bounded in
(\ref{state-eqy0}), then it can be addressed by applying the results in our
earlier work \cite{Hu-Ji-Xu22}. We adopt the spike variation approach to
obtain a global stochastic MP for the optimality of
(\ref{state-eqy0})-(\ref{obje-eq-y0}). For $\psi=b$, $\sigma$, $f$, $\Phi$,
denote $\psi(t)=\psi(t,\bar{X}(t),\bar{u}(t)),\psi_{x}(t)=\psi_{x}(t,\bar
{X}(t),\bar{u}(t))$, $t\in\lbrack0,T]$.

\begin{assumption}
\label{assum-1} (i) $b$, $\sigma$ are twice continuously differentiable with
respect to $x$. The derivatives $b_{x}$, $b_{xx}$, $\sigma_{x}$, $\sigma_{xx}$
are continuous in $(x,u)$ and uniformly bounded. $b,\sigma$ are bounded by
$C(1+|x|+|u|)$;

(ii) $f$, $\Phi$ are twice continuously differentiable with respect to $x$.
The derivatives $f_{x},f_{xx}$ are continuous in $(x,u)$; $f$, $\Phi$, $f_{x}%
$, $\Phi_{x}$, $f_{xx}$, $\Phi_{xx}$ are bounded.
\end{assumption}

We further assume the set of admissible controls
\[
\mathcal{U}_{BD}[0,T]=\{u:[0,T]\times\Omega\rightarrow U|\sup\limits_{0\leq t\leq
T}\mathbb{E}[|u(t)|^{p}]<+\infty, \text{ }\forall p>0 \}.
\]

Under Assumption \ref{assum-1}, it follows from Theorem 2.3 in \cite{Hu-Ji-Xu22} that for any $u(\cdot) \in \mathcal{U}_{BD}[0,T]$ (\ref{state-eqy0}) admits a unique solution $(X(\cdot),Y(\cdot),Z(\cdot)) \in L_{\mathbb{F}}^{2}(\Omega;C([0,T],\mathbb{R}^{2}))\times L_{\mathbb{F}}^{\infty}(\Omega;C([0,T],\mathbb{R})) \times L_{\mathbb{F}}^{2}([0,T];\mathbb{R}^{d})$ 
such that the stochastic integral $\{\int_{0}^{t} Z(s) dW(s), $ $t \in [0,T]\}$ is a bounded mean oscillation martingale. Therefore (\ref{obje-eq-y0}) is well defined.
Furthermore, let $\bar{u}(\cdot)$ be an optimal control and $(\bar{X}(\cdot),$ $\bar{Y}(\cdot),\bar{Z}(\cdot))$ be the
corresponding optimal state trajectory. As a special case of  our previous work (see \cite{Hu-Ji-Xu22} for the details), the global stochastic maximum principle
for the optimal control problem (\ref{state-eqy0})-(\ref{obje-eq-y0}) holds (see \cite{HJXX}).

\begin{remark}
\label{thm-global-SMP} 
When $\Gamma=\frac{\theta}{2}\mathrm{I}_{d\times d}$, under the same assumption in \cite{LimZhou05} that the value function is sufficiently smooth, the obtained stochastic maximum principle degenerates into the MP in \cite{LimZhou05}. Therefore,
without the smooth assumption imposed on the value function, it is more
convenient and straightforward to use (\ref{intro-obje-recursive}%
) and (\ref{intro-FBSDE-state-eq}) to formulate the risk-sensitive control problem
in \cite{LimZhou05} so that one does not have to introduce the auxiliary state
equation and use the logarithmic transformation.

\end{remark}

%
%Compared with the risk-neutral MP, the asymmetric
%risk-sensitivity leads to the additional term
%{\small \begin{equation}
%\label{asymmetric-SMP-new-term}p^{\intercal}(t)\left[  \sigma(t,\bar
%{X}(t),u)-\sigma(t,\bar{X}(t),\bar{u}(t))\right]  \Gamma\left\{  2\bar
%{Z}(t)+\left[  \sigma(t,\bar{X}(t),u)-\sigma(t,\bar{X}(t),\bar{u}(t))\right]
%^{\intercal}p(t)\right\}
%\end{equation}
%}
%in  (\ref{SMP}) by simple calculation.

\begin{corollary}
\label{cor-local-SMP} Assume the coefficients are differentiable with respect
to $u$ and the control domain $U\subseteq\mathbb{R}^{k}$ is a convex set. Then
the foloowing stochastic maximum principle holds%
\begin{equation}
\label{local-SMP}H_{u}\mathcal{(}t,\bar{X}(t),\bar{Y}(t),\bar{Z}%
(t),u,p(t),q(t))|_{u=\bar{u}(t)}(u-\bar{u}(t))\geq0,\forall u\in U\text{,
}dt\otimes d\mathbb{P}\text{-a.e.},
\end{equation}
where%
\begin{equation}
H(t,x,z,u,p,q)=p^{\intercal}b(t,x,u)+\mathrm{tr}\left\{
q^{\intercal}\sigma(t,x,u)\right\}  +2p^{\intercal}\sigma(t,x,u)\Gamma
z+g(t,x,u), \label{def-H-u}%
\end{equation}
 $\left(  p(\cdot),q(\cdot)\right)  $, satisfies the adjoint equation
\begin{equation}
\left\{
\begin{array}
[c]{rl}%
dp(t)= & -\left\{  f_{x}(t)+2\sum\limits_{i=1}^{d}\left(  \Gamma\bar
{Z}(t)\right)  _{i}\left[  \sigma_{i,x}^{\intercal}(t)p(t)+q_{i}(t)\right]
\right. \\
& +\left.  b_{x}^{\intercal}(t)p(t)+\sum\limits_{i=1}^{d}\sigma_{i,x}%
^{\intercal}(t)q_{i}(t)\right\}  dt+\sum\limits_{i=1}^{d}q_{i}(t)dW_{i}(t),
\ \ t\in[0,T],\\
p(T)= & \Phi_{x}(\bar{X}(T))
\end{array}
\right.  \label{eq-p}%
\end{equation}
 $\{\sigma_{i,x}\}_{i=1,\ldots,d}$ is the Jacobian
matrix of the $i$th column of $\sigma$.
\end{corollary}

We shall utilize Corollary \ref{cor-local-SMP} to informally derive the candidate optimal control in the following subsection.

\subsection{Asymmetric LQ risk-sensitive control problems}

In this subsection, we consider a kind of unbounded $(\Phi,f)$ in
(\ref{state-eqy0}). Similar to \cite{LimZhou05}, we are interested in the
asymmetric LQ risk-sensitive control problems where $(\Phi,f)$ possesses
the forms in (\ref{prelim-LQ-coeff}). Consider the following stochastic
control system:
\begin{equation}
\left\{
\begin{array}
[c]{rl}%
dX(t)= & [A(t)X(t)+B(t)u(t)]dt+\Sigma(t)dW(t),\\
dY(t)= & -[Z^{\intercal}(t)\Gamma Z(t)+\frac{1}{2}X^{\intercal}%
(t)M(t)X(t)+\frac{1}{2}u^{\intercal}(t)N(t)u(t)]dt\\
& +Z^{\intercal}(t)dW(t),\text{ \ }t\in\lbrack0,T],\\
X(0)= & x_{0},\ Y(T)=\frac{1}{2}X^{\intercal}(T)HX(T),
\end{array}
\right.  \label{state-eq-yr}%
\end{equation}
where  $A(\cdot)\in L^{\infty}([0,T];\mathbb{R}^{n\times n})$, $B(\cdot)\in
L^{\infty}([0,T];\mathbb{R}^{n\times k})$, $\Sigma(\cdot)\in L^{2}([0,T];\mathbb{R}^{n\times d})$, $M(\cdot)\in L^{\infty}([0,T];\mathbb{S}^{n\times n})$, $N(\cdot)\in L^{\infty}([0,T];\mathbb{S}^{k\times k})$ are deterministic matrix-valued functions; $H\in\mathbb{S}^{n\times n}$ and $H\geq0$; $M(t)\geq0$, $N(t)\geq\delta\mathrm{I}_{k\times k}$ for some $\delta>0$ and all $t\in\lbrack0,T]$; $(\Sigma\Sigma^{\intercal})(t)>0$, $t\in\lbrack0,T]$. Set $\Delta=\int_{0}^{T}(\Sigma\Sigma^{\intercal})(s)ds$ and denote by two positive numbers $\gamma_{\mathrm{max}}$, $\gamma_{\mathrm{min}}$ the maximal, minimal eigenvalues of $\Gamma$ respectively.

For any $u(\cdot)\in L_{\mathbb{F}}^{2}([0,T];\mathbb{R}^{k})$, under the above conditions, the SDE in (\ref{state-eq-yr}) admits a unique solution $X(\cdot) \in L_{\mathbb{F}}^{2}(\Omega;C([0,T],\mathbb{R}^{n}))$ according to the standard theory. To describe our goal under asymmetric LQ setting, we introduce the set of all admissible controls.

\begin{definition}
\label{def-admmisible-lq} Denote by $\mathcal{U}_{LQ}[0,T]$ the admissible control set that is given by
\[%
\begin{array}
[c]{rl}%
\mathcal{U}_{LQ}[0,T]:= & \left\{  u(\cdot)\in L_{\mathbb{F}}^{2}%
([0,T];\mathbb{R}^{k}): \text{the BSDE in (\ref{state-eq-yr}) admits a minimal }\right.  \\
& \left.\text{ solution} (Y(\cdot),Z(\cdot)) \in L_{\mathbb{F}}^{2}(\Omega;C([0,T],\mathbb{R}))\times L_{\mathbb{F}}^{2}([0,T];\mathbb{R}^{d}) \right\}.
\end{array}
\]
\end{definition}

\begin{remark}
	By Proposition 4 in \cite{Briand-H-06}, for given $u(\cdot)\in L_{\mathbb{F}}^{2}([0,T];\mathbb{R}^{k})$ if
	the BSDE in (\ref{state-eq-yr}) has a solution in $L_{\mathbb{F}}^{2}(\Omega;C([0,T],\mathbb{R}))\times L_{\mathbb{F}}^{2}([0,T];\mathbb{R}^{d})$, 
	then it admits a minimal solution in $L_{\mathbb{F}}^{2}(\Omega;C([0,T],\mathbb{R}))\times L_{\mathbb{F}}^{2}([0,T];\mathbb{R}^{d})$. 
\end{remark}

The cost functional is defined by
\begin{equation}
J(u(\cdot)):=Y(0), \ u(\cdot) \in \mathcal{U}_{LQ}[0,T].
\label{obje-eq-yr}
\end{equation}
The objective is to find $\bar{u}(\cdot) \in \mathcal{U}_{LQ}[0,T]$ (if it ever exists) such that
\[
J(\bar{u}(\cdot))=\inf\limits_{u(\cdot)\in\ \mathcal{U}_{LQ}[0,T]}J(u(\cdot)).
\]

To determine the optimal feedback control like the classical LQ
case, we resort to the aid of Corollary \ref{cor-local-SMP}.
It should be pointed out that the following derivation based on the Corollary \ref{cor-local-SMP} is
just informal and heuristic because $\frac{1}{2}X^{\intercal}(T)HX(T)$ and $\frac{1}
{2}X^{\intercal}(t)M(t)X(t)+\frac{1}{2}u^{\intercal}(t)N(t)u(t)$ are not bounded.
The first-order adjoint equation (\ref{eq-p}) now becomes
\begin{equation}
\left\{
\begin{array}
[c]{rl}%
dp(t)= & -[A^{\intercal}(t)p(t)+2q(t)\Gamma\bar{Z}(t)+M(t)\bar{X}(t)]dt+q(t)dW(t),\\
p(T)= & H\bar{X}(T).
\end{array}
\right.  \label{eq-lq-adjoint}%
\end{equation}
Let $\left(  p(\cdot), q(\cdot) \right)  \in L_{\mathbb{F}}^{2}(\Omega;C([0,T],\mathbb{R}^{n}))
\times L_{\mathbb{F}}^{2}([0,T];\mathbb{R}^{n\times d})$ be the unique solution to (\ref{eq-lq-adjoint}).
Since the control set $\mathbb{R}^{k}$ is convex, by the
minimum condition in Corollary \ref{cor-local-SMP}, the candidate optimal control
possesses the following form
\begin{equation}
\bar{u}(t)=-N^{-1}(t)B^{\intercal}(t)p(t).
\label{candidate-opti-control}%
\end{equation}
We assume the relationship
\begin{equation}
	\label{barZ-Sigam-p}
	\bar{Z}(t)=\Sigma^{\intercal}(t)p(t)
\end{equation}
and conjecture that $p(\cdot)$ and $\bar{X}(\cdot)$ are related by
\begin{equation}
p(t)=P(t)\bar{X}(t)
\label{re-conje}%
\end{equation}
with $P(\cdot)\in C^{1}([0,T],\mathbb{S}^{n\times n})$. Applying It\^{o}'s formula to (\ref{re-conje}) yields
\begin{equation}%
	dp(t) = \left[  P^{(1)} (t) \bar{X}(t)+P(t)A(t)\bar{X}(t)+P(t)B(t)\bar{u}(t)\right]
	dt+P(t)\Sigma(t)dW(t).
\label{eq-p-P-relation}
\end{equation}
Due to relationships (\ref{candidate-opti-control}), (\ref{barZ-Sigam-p}), and (\ref{re-conje}), comparing (\ref{eq-p-P-relation}) with (\ref{eq-lq-adjoint}), we
obtain that $P(\cdot)$ should satisfied the Riccati differential equation
\begin{equation}
\left\{
\begin{array}
[c]{rl}%
dP(t)= & -\left[  A^{\intercal}(t)P(t)+P(t)A(t)+M(t)\right. \\
& +\left.  P(t)\left(  2\Sigma(t)\Gamma\Sigma^{\intercal}(t)-B(t)N^{-1}(t)B^{\intercal}(t)\right)  P(t)\right]  dt,\text{ \ }%
t\in\lbrack0,T],\\
P(T)= & H.
\end{array}
\right.  \label{eq-P-ode}%
\end{equation}

\begin{lemma}
\label{lem-Riccati-wellpose-comparison} Assume $\Sigma(t)$, $N(t)$ are both
continuous in $t$ and
\begin{equation}
\label{Riccati-P-wellpose-cond}2\Sigma(t)\Gamma\Sigma^{\intercal}(t) -
B(t)N(t)^{-1}B^{\intercal}(t) < 0, \ \forall t \in[0,T].
\end{equation}
Then (\ref{eq-P-ode}) admits a unique solution $P(\cdot) \in C([0,T],\mathbb{S}%
^{n\times n})$ such that $P(t) \geq0$ for all $t \in[0,T]$ and $\left\Vert P
\right\Vert _{\infty} \leq B_{P}$, where $B_{P}:=e^{2\left\Vert A\right\Vert _{\infty}T}\left(  \left\Vert
H\right\Vert _{\infty}+\left\Vert M\right\Vert _{\infty}T\right)  $.
\end{lemma}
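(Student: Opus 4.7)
The plan is to establish local well-posedness of (\ref{eq-P-ode}) backward from $T$ by standard ODE theory and then derive uniform a priori bounds on any interval of existence that simultaneously force non-negativity, the claimed Frobenius-norm estimate, and absence of blow-up. The right-hand side
$F(t,P) := -[A^{\intercal}(t)P + PA(t) + M(t) + P(2\Sigma(t)\Gamma\Sigma^{\intercal}(t) - B(t)N^{-1}(t)B^{\intercal}(t))P]$
is essentially bounded and measurable in $t$ (since $A,B,M \in L^{\infty}$ and $\Sigma,N$ are continuous on $[0,T]$) and quadratic, hence locally Lipschitz, in $P$. Thus the Carath\'eodory version of Picard--Lindel\"of delivers a unique maximal absolutely continuous solution $P$ on some interval $(\tau_{\ast},T] \subseteq [0,T]$ with $P(T) = H$; the task is then to show $\tau_{\ast} = 0$ along with $P \ge 0$ and $|P| \le B_{P}$.

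For the non-negativity step, set $L(s) := B(s)N^{-1}(s)B^{\intercal}(s) - 2\Sigma(s)\Gamma\Sigma^{\intercal}(s) \ge 0$ by (\ref{Riccati-P-wellpose-cond}), fix $t_{0} \in (\tau_{\ast},T]$ and $x \in \mathbb{R}^{n}$, and solve the linear ODE $\xi'(s) = [A(s) - L(s)P(s)]\xi(s)$, $\xi(t_{0}) = x$, on $[t_{0},T]$ (its coefficient is bounded there since $P$ is continuous). Using (\ref{eq-P-ode}), a direct product-rule computation yields almost everywhere the key identity
\begin{equation*}
\frac{d}{ds}\bigl(\xi^{\intercal}(s)P(s)\xi(s)\bigr) = -\xi^{\intercal}(s)M(s)\xi(s) - \xi^{\intercal}(s)P(s)L(s)P(s)\xi(s),
\end{equation*}
the cross terms $\xi^{\intercal}A^{\intercal}P\xi$ and $\xi^{\intercal}PA\xi$ cancelling precisely with the corresponding contributions from $P'$. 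Integrating from $t_{0}$ to $T$ with $P(T) = H$ yields
\begin{equation*}
x^{\intercal}P(t_{0})x = \xi^{\intercal}(T)H\xi(T) + \int_{t_{0}}^{T}\xi^{\intercal}M\xi\,ds + \int_{t_{0}}^{T}\xi^{\intercal}PLP\xi\,ds \ge 0,
\end{equation*}
so $P(t_{0}) \ge 0$. This realizes $x^{\intercal}P(t)x$ as the cost of an auxiliary LQ problem driven in feedback by $P$ itself.

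For the upper bound, compare $P$ with the solution $Q$ of the linear Lyapunov equation $dQ/ds + A^{\intercal}Q + QA + M = 0$, $Q(T) = H$, which exists on all of $[0,T]$ and admits the representation $Q(t) = \Phi(T,t)^{\intercal}H\Phi(T,t) + \int_{t}^{T}\Phi(s,t)^{\intercal}M(s)\Phi(s,t)\,ds$, where $\Phi(s,t)$ is the fundamental matrix of $\dot y = A(s)y$ and satisfies $\|\Phi(s,t)\|_{\mathrm{op}} \le e^{\|A\|_{\infty}|s-t|}$. A routine estimate then gives $|Q(t)| \le e^{2\|A\|_{\infty}T}(\|H\|_{\infty} + \|M\|_{\infty}T) = B_{P}$. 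The difference $R := Q - P$ solves $dR/ds + A^{\intercal}R + RA = -PLP$ with $R(T) = 0$, and since $PLP \ge 0$ by the previous step, the analogous variation-of-parameters formula yields $R(t) \ge 0$, i.e.\ $0 \le P(t) \le Q(t)$ in the L\"owner order on $(\tau_{\ast},T]$. Because $0 \le P \le Q$ implies $|P|^{2} = \mathrm{tr}(P^{2}) \le \mathrm{tr}(Q^{2}) = |Q|^{2}$ (thanks to $\mathrm{tr}((Q-P)(Q+P)) \ge 0$ whenever both factors are positive semi-definite), one obtains $|P(t)| \le B_{P}$ uniformly on $(\tau_{\ast},T]$, which rules out finite-time blow-up and forces $\tau_{\ast} = 0$ by the standard extension principle. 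The main obstacle is the non-negativity step: everything else reduces to routine linear-matrix comparison and Gronwall, but the feedback linearization $\xi' = (A - LP)\xi$ must be chosen precisely so that the mixed terms cancel, and without (\ref{Riccati-P-wellpose-cond}) ensuring $L \ge 0$, neither the non-negativity nor the L\"owner comparison would go through.
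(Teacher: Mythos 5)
Your proof is correct, and its skeleton coincides with the paper's: both arguments bound $P$ by comparing it with the solution of the linear Lyapunov equation $dQ/dt=-[A^{\intercal}Q+QA+M]dt$, $Q(T)=H$, estimate that solution by $B_{P}$, and then transfer the bound to $P$ via monotonicity of the Frobenius norm on the positive semi-definite cone. The difference is that the paper outsources the two nontrivial ingredients to the literature --- existence and uniqueness to classical Riccati theory (Wonham) and the L\"owner comparison $0\leq P(t)\leq Q(t)$ to Theorem 2.2 of Freiling--Jank --- whereas you prove both from scratch: non-negativity via the closed-loop representation $x^{\intercal}P(t_{0})x=\xi^{\intercal}(T)H\xi(T)+\int_{t_{0}}^{T}(\xi^{\intercal}M\xi+\xi^{\intercal}PLP\xi)\,ds$ with the feedback dynamics $\xi'=(A-LP)\xi$, the comparison via the explicit variation-of-parameters formula for $R=Q-P$ (which solves a Lyapunov equation with source $-PLP\leq 0$), and global existence from the resulting a priori bound on the maximal interval. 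Your version is therefore self-contained and, as a bonus, it supplies the justification $\mathrm{tr}(Q^{2}-P^{2})=\mathrm{tr}((Q-P)(Q+P))\geq 0$ for the norm monotonicity, a step the paper passes over with the phrase ``from the definition of the Frobenius norm.'' The trade-off is length: the paper's two citations compress your argument to a few lines, while your route makes explicit exactly where the sign condition (\ref{Riccati-P-wellpose-cond}) enters (it makes $L\geq 0$, which is what drives both the non-negativity and the comparison).
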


\begin{proof}
According to the continuity of $\Sigma(\cdot)$ and $N(\cdot)$, if (\ref{Riccati-P-wellpose-cond}) holds then (\ref{eq-P-ode}) admits a unique solution by the classical Riccati theory (We refer the readers to \cite{Wonham-1968} for more details). To prove the last claim, let $\tilde{P}(\cdot) \in C([0,T],\mathbb{S}^{n\times n})$ be the unique solution to the linear ordinary differential equation
\begin{equation}
\left\{
\begin{array}
[c]{rl}%
d\tilde{P}(t)= & -\left[  A^{\intercal}(t)\tilde{P}(t)+\tilde{P}(t)A(t)+M(t)\right]  dt,\text{ \ }t\in\lbrack0,T],\\
\tilde{P}(T)= & H.
\end{array}
\right.
\label{eq-tilde-P-ode}%
\end{equation}
Thanks to Grownwall's lemma, one can easily deduce that $\left\Vert \tilde{P} \right\Vert_{\infty} \leq B_{P}$.
On the other hand, since $2\Sigma(t)\Gamma \Sigma^{\intercal}(t) - B(t)N(t)^{-1}B^{\intercal}(t) < 0$ for all $t \in [0,T]$ and $H \geq 0$, by Theorem 2.2 in \cite{Freiling-Jank} we have $0 \leq P(t) \leq \tilde{P}(t), \forall t \in [0,T]$. Then from the definition of the Frobenius norm for the real matrices we obtain $\left\Vert P \right\Vert_{\infty} \leq \left\Vert \tilde{P} \right\Vert_{\infty} \leq B_{P}$.
\end{proof}

\begin{remark}
When $\Gamma=\frac{\theta}{2}\mathrm{I}_{d \times d}$ and the coefficients in (\ref{state-eq-yr})
are all constant matrices, (\ref{eq-P-ode}) is identical to the one introduced in \cite{Duncan}.
\end{remark}

The following main result indicates that if the Riccati differential equation (\ref{eq-P-ode}) admits a unique solution, then $\mathcal{U}_{LQ}[0,T]$ is not empty and there exists an admissible control with feedback type optimizing the problem (\ref{state-eq-yr})-(\ref{obje-eq-yr}).

\begin{theorem}
\label{thm-lq-optimal} If $P(\cdot) \in C([0,T],\mathbb{S}^{n\times n})$
uniquely solves (\ref{eq-P-ode}), then the feedback control
\begin{equation}
	\bar{u}(t)=-N^{-1}(t)B^{\intercal}(t)P(t)\bar{X}(t) \ \ t\in[0,T]
\label{eq-opt-con-lq}%
	\end{equation}
belongs to $\mathcal{U}_{LQ}[0,T]$ and is optimal for the problem (\ref{state-eq-yr})-(\ref{obje-eq-yr}). The optimal value of the objective function is
\[
J(\bar{u}(\cdot))=\frac{1}{2}x_{0}^{\intercal}P(0)x_{0}+\frac{1}{2}\int_{0}%
^{T}\mathrm{tr}\left\{  P(t)\left(  \Sigma\Sigma^{\intercal}\right)
(t)\right\}  dt.
\]
\end{theorem}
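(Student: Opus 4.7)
The plan is first to verify that the candidate feedback law lies in $\mathcal{U}_{LQ}[0,T]$ and produces the announced value, then to establish its optimality through a convexity argument combined with a Girsanov change of measure driven only by $\bar{u}(\cdot)$.

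Under the closed-loop dynamics $d\bar{X}(t)=[A(t)-B(t)N^{-1}(t)B^{\intercal}(t)P(t)]\bar{X}(t)dt+\Sigma(t)dW(t)$, the coefficients are deterministic and bounded (because $\|P\|_{\infty}\leq B_{P}$ by Lemma \ref{lem-Riccati-wellpose-comparison}), so $\bar{X}(\cdot)$ is a Gaussian process with $\mathbb{E}[\exp(c|\bar{X}(t)|^{2})]<+\infty$ for sufficiently small $c>0$, and $\bar{u}(\cdot)\in L_{\mathbb{F}}^{2}([0,T];\mathbb{R}^{k})$. I would then substitute the ansatz $Y(t)=\tfrac{1}{2}\bar{X}^{\intercal}(t)P(t)\bar{X}(t)+\alpha(t)$, $Z(t)=\Sigma^{\intercal}(t)P(t)\bar{X}(t)$ into the BSDE using It\^{o}'s formula and the Riccati equation (\ref{eq-P-ode}), obtaining $\alpha^{(1)}(t)=-\tfrac{1}{2}\mathrm{tr}\{P(t)\Sigma(t)\Sigma^{\intercal}(t)\}$ with $\alpha(T)=0$. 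This pair lies in the space of Definition \ref{def-admmisible-lq}, so by the remark following that definition a minimal solution exists; equality of the minimal solution with the ansatz (hence the claimed formula for $J(\bar{u}(\cdot))$) will be forced a posteriori by the lower bound proved below.

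For optimality, fix any $u(\cdot)\in\mathcal{U}_{LQ}[0,T]$ with its minimal trajectory $(X,Y,Z)$ and set $\delta X=X-\bar{X}$, $\delta Z=Z-\bar{Z}$, $\delta u=u-\bar{u}$. Since $\Sigma$ is state- and control-independent, $d\delta X=(A\delta X+B\delta u)dt$ with $\delta X(0)=0$. The generator $F(x,z,u)=z^{\intercal}\Gamma z+\tfrac{1}{2}x^{\intercal}Mx+\tfrac{1}{2}u^{\intercal}Nu$ is convex in $(x,z,u)$ and $\Phi$ is convex in $x$, so gradient inequalities give $F(X,Z,u)-F(\bar{X},\bar{Z},\bar{u})\geq(M\bar{X})^{\intercal}\delta X+2(\Gamma\bar{Z})^{\intercal}\delta Z+(N\bar{u})^{\intercal}\delta u$ and $\Phi(X(T))-\Phi(\bar{X}(T))\geq(H\bar{X}(T))^{\intercal}\delta X(T)$. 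Subtracting the two BSDEs yields a lower bound for $Y(0)-\bar{Y}(0)$. With $(p,q)=(P\bar{X},P\Sigma)$ solving the adjoint equation (\ref{eq-lq-adjoint}), applying It\^{o} to $p^{\intercal}\delta X$ and using $p^{\intercal}B=-(N\bar{u})^{\intercal}$ (from $\bar{u}=-N^{-1}B^{\intercal}p$) produces an identity that eliminates the linear-in-$\delta X$, linear-in-$\delta u$, and terminal contributions, leaving
\[
Y(0)-\bar{Y}(0)\geq\int_{0}^{T}\!\bigl[2(\Gamma\bar{Z})^{\intercal}\delta Z-2(q\Gamma\bar{Z})^{\intercal}\delta X\bigr]dt+\int_{0}^{T}\delta X^{\intercal}q\,dW-\int_{0}^{T}\delta Z^{\intercal}dW.
\]

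The final step is a Girsanov change of measure: set $L(t)=\exp\bigl(\int_{0}^{t}(2\Gamma\bar{Z})^{\intercal}dW-2\int_{0}^{t}|\Gamma\bar{Z}|^{2}ds\bigr)$ and $d\tilde{W}=dW-2\Gamma\bar{Z}dt$. The four remaining terms combine exactly into $\int_{0}^{T}\delta X^{\intercal}q\,d\tilde{W}-\int_{0}^{T}\delta Z^{\intercal}d\tilde{W}$. Since $Y(0)-\bar{Y}(0)$ is deterministic, taking $\tilde{\mathbb{E}}$ produces $J(u)\geq J(\bar{u})$ provided those integrals are centered under $\tilde{\mathbb{P}}$; for $u=\bar{u}$ the same inequality pins $J(\bar{u})$ to the ansatz value, closing the first step. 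The main obstacle is precisely this verification: that $L$ is a true $\mathbb{P}$-martingale (not merely local) and that the two $\tilde{\mathbb{P}}$-stochastic integrals have zero mean. The decisive point of the completion-of-gradient method is that the Girsanov drift $2\Gamma\bar{Z}=2\Gamma\Sigma^{\intercal}P\bar{X}$ depends \emph{only} on $\bar{u}(\cdot)$ and $\bar{X}$ is Gaussian with explicit covariance; hence $\mathbb{E}\bigl[\exp\bigl(2\int_{0}^{T}|\Gamma\bar{Z}|^{2}ds\bigr)\bigr]<+\infty$ follows from Cameron--Martin-type absolute-continuity results for Gaussian measures (partitioning $[0,T]$ into sufficiently fine subintervals to invoke Novikov's condition on each piece if needed), while $L^{2}(\tilde{\mathbb{P}})$-integrability of the integrands follows from the $L^{2}$-admissibility of $u$ together with Gaussian tail control of $\bar{X}$ and the boundedness of $q=P\Sigma$.
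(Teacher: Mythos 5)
Your overall architecture coincides with the paper's: the same quadratic ansatz for $(\bar Y,\bar Z)$, the same convexity/gradient inequality combined with the adjoint pair $(p,q)=(P\bar X,P\Sigma)$, and the same Girsanov change of measure whose drift $2\Gamma\bar Z=2\Gamma\Sigma^{\intercal}P\bar X$ depends only on $\bar u(\cdot)$, so that the Gaussianity of $\bar X$ legitimizes the density $L$. Up to that point the proposal is sound. However, there is a genuine gap at exactly the step you yourself flag as ``the main obstacle'': the claim that the $\tilde{\mathbb P}$-integrability of $\delta Z=Z-\bar Z$ ``follows from the $L^{2}$-admissibility of $u$ together with Gaussian tail control of $\bar X$'' is not justified and does not follow. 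Admissibility only gives $\mathbb{E}_{\mathbb P}\bigl[\int_{0}^{T}|Z(s)|^{2}ds\bigr]<+\infty$ under the \emph{original} measure; since the density $L(T)$ is unbounded and $\int_{0}^{T}|Z|^{2}ds$ is only known to be in $L^{1}(\mathbb P)$ (not in $L^{p}(\mathbb P)$ for any $p>1$), no H\"older-type argument transfers this to $\mathbb{E}_{\tilde{\mathbb P}}\bigl[\int_{0}^{T}|Z(s)|^{2}ds\bigr]<+\infty$ or even to $\mathbb{E}_{\tilde{\mathbb P}}\bigl[(\int_{0}^{T}|Z|^{2}ds)^{1/2}\bigr]<+\infty$. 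The Gaussian tail control handles $\bar Z$ but says nothing about the $Z$ associated with an arbitrary admissible $u$. Without this, the stochastic integral $\int\delta Z^{\intercal}d\tilde W$ is only a local martingale under $\tilde{\mathbb P}$ and you cannot conclude $J(u)\geq J(\bar u)$.

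The paper closes this gap with an argument you would need to supply: it exploits the sign structure of the problem ($H,M,N\geq 0$ forces $Y\geq 0$, and $\Gamma>0$ makes the generator coercive in $Z$) to derive, via a localization $\tau_{m}=\inf\{t:\int_{0}^{t}|Z|^{2}\geq m\}\wedge T$, a pathwise bound of the form $\int_{0}^{\tau_{m}}|Z|^{2}ds\leq \tfrac{2}{\gamma_{\min}}Y(0)+C\int_{0}^{T}|\bar Z|^{2}ds+C\sup_{t}\bigl|\int_{0}^{t\wedge\tau_{m}}Z^{\intercal}d\bar W\bigr|$; the Burkholder--Davis--Gundy inequality under $\bar{\mathbb P}$ and the elementary inequality $a\leq\tfrac12(\varepsilon a^{2}+\varepsilon^{-1})$ then give a bound on $\mathbb{E}_{\bar{\mathbb P}}\bigl[\int_{0}^{\tau_{m}}|Z|^{2}ds\bigr]$ uniform in $m$, and Fatou's lemma yields $Z\in L^{2}(\bar{\mathbb P})$. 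This self-improving estimate, driven by the BSDE itself rather than by the a priori $L^{2}(\mathbb P)$ bound, is the decisive technical ingredient missing from your proposal. (A minor additional point: the same integrability issue arises for the term $\Sigma^{\intercal}P(X-\bar X)$ in the combined integrand, which is controlled by $\int_{0}^{T}|u-\bar u|\,dt$ and hence also needs an argument under $\tilde{\mathbb P}$.)
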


%\begin{remark}
%If the maximal eigenvalue of $\Delta$ is sufficiently
%small, or $\left\Vert H \right\Vert _{\infty}$ and $\left\Vert M \right\Vert
%_{\infty}$ or $\left\Vert H \right\Vert _{\infty}$ and $T$ are sufficiently small, then we can guarantee %(\ref{optimal-barU-admiss-condition}) to hold.
%\end{remark}

\begin{proof}
Plugging (\ref{eq-opt-con-lq}) into the SDE in (\ref{state-eq-yr}), it admits a unique solution $\bar{X}(\cdot) \in \bigcap_{p>1} L_{\mathbb{F}}^{p}(\Omega;C([0,T],\mathbb{R}^{n}))$ as it is Gaussian, which implies $\bar{u}(\cdot) \in L_{\mathbb{F}}^{2}([0,T];\mathbb{R}^{k})$.
To prove  $\bar{u}(\cdot) \in \mathcal{U}_{LQ}[0,T]$, we have to find the minimal solution in $L_{\mathbb{F}}^{2}(\Omega;C([0,T],\mathbb{R}))$ $\times L_{\mathbb{F}}^{2}([0,T];\mathbb{R}^{d})$ corresponding to $\bar{u}(\cdot)$. Applying It\^{o}'s formula to $\bar{X}^{\intercal}(t)P(t)\bar{X}(t)$ yields
\begin{equation}%
\begin{array}
[c]{rl}
& \frac{1}{2}\bar{X}^{\intercal}(t)P(t)\bar{X}(t)+\frac{1}{2} \int_{t}^{T} \mathrm{tr}\left\{  P(s)\left(  \Sigma\Sigma^{\intercal}\right)  (s)\right\}  ds\\
= & \frac{1}{2}\bar{X}^{\intercal}(T)H\bar{X}(T)+\int_{t}^{T}\left[  \bar
{X}^{\intercal}(s)P(s)\Sigma(s)\Gamma\Sigma^{\intercal}(s)P(s)\bar
{X}(s)\right.  \\
& +\left.  \frac{1}{2}\bar{X}^{\intercal}(s)M(s)\bar{X}(s)+\frac{1}{2}\bar
{u}^{\intercal}(s)N(s)\bar{u}(s)\right]  ds-\int_{t}^{T}\bar{X}^{\intercal
}(s)P(s)\Sigma(s)dW(s),
\end{array}
\label{P-barX2-psi}%
\end{equation}
which implies that
\begin{equation}
\label{barY-barZ-barX-relation}
\begin{array}
[c]{rl}%
\bar{Y}(t):= & \frac{1}{2}\bar{X}^{\intercal}(t)P(t)\bar{X}(t)+\frac{1}{2} \int_{t}^{T} \mathrm{tr}\left\{  P(s)\left(  \Sigma\Sigma^{\intercal}\right)  (s)\right\}  ds,\\
\bar{Z}(t):= & \Sigma^{\intercal}(t)P(t)\bar{X}(t)
\end{array}
\end{equation}
solves the BSDE in (\ref{state-eq-yr}) when $u(\cdot)=\bar{u}(\cdot)$, and it belongs to $L_{\mathbb{F}}^{2}(\Omega;C([0,T],\mathbb{R}))\times L_{\mathbb{F}}^{2}([0,T];\mathbb{R}^{d})$ as $\bar{X}(\cdot) \in L_{\mathbb{F}}^{4}(\Omega;C([0,T],\mathbb{R}^{n}))$. We claim that $(\bar{Y}(\cdot),\bar{Z}(\cdot))$ is minimal. Actually, if $(\bar{Y}^{\prime}(\cdot),\bar{Z}^{\prime}(\cdot)) \in L_{\mathbb{F}}^{2}(\Omega;C([0,T],\mathbb{R}))\times L_{\mathbb{F}}^{2}([0,T];\mathbb{R}^{d})$ is another solution corresponding to $\bar{u}(\cdot)$, then we have for any $t \in [0,T]$,
\[
\bar{Y}^{\prime}(t)-\bar{Y}(t) \geq \int_{t}^{T}2\left[  \bar{Z}^{\prime}
(s)-\bar{Z}(s)\right]  ^{\intercal}\Gamma\bar{Z}(s)ds-\int_{t}^{T}\left[
\bar{Z}^{\prime}(s)-\bar{Z}(s)\right]  ^{\intercal}dW(s).
\]
Consider a new probability measure $\bar{\mathbb{P}}$ defined by the stochastic exponential
\begin{equation}
d\bar{\mathbb{P}}=\exp\left\{  2\int_{0}^{T}\bar{Z}^{\intercal}(s)\Gamma
dW(s)-\int_{0}^{T}\left\vert \Gamma\bar{Z}(s)\right\vert ^{2}ds\right\}
d\mathbb{P}.\label{LQ-R-N-derivative}%
\end{equation}
Because $\bar{X}(\cdot)$ is a Gaussian process, the above stochastic exponential is a Radon-Nikodym derivative that integrates to one, according to (\ref{barY-barZ-barX-relation}) and the argument in \cite{Skorokhod} that the Girsanov exponential with Gaussian integrand is an exponential martingale.
According to Girsanov's theorem, we deduce
\begin{equation}
\label{difference-barYp-barY}
\bar{Y}^{\prime}(t)-\bar{Y}(t) \geq -\int_{t}^{T}\left[\bar{Z}^{\prime}(s)-\bar{Z}(s)\right]  ^{\intercal}d\bar{W}(s),
\end{equation}
where
\[
\bar{W}(t)=W(t)-2\int_{0}^{t}\Gamma\bar{Z}(s)ds,\ \ t \in [0,T]
\]
is a $d$ dimensional Brownian motion under $\bar{\mathbb{P}}$. Denote by $\mathbb{E}_{\bar{\mathbb{P}}}\left[  \cdot\right]  $ the mathematical expectation corresponding to $\bar{\mathbb{P}}$. It will be proved later that the right-hand side of inequality (\ref{difference-barYp-barY}) is a true martingale. Taking the conditional expectation $\mathbb{E}_{\bar{\mathbb{P}}} \left[ \cdot \mid \mathcal{F}_{t} \right]$ in both sides of (\ref{difference-barYp-barY}) yields $Y^{\prime}(t) \geq Y(t)$, $\bar{\mathbb{P}}$-a.s. (of course, $\mathbb{P}$-a.s.) Therefore $\bar{u}(\cdot) \in \mathcal{U}_{LQ}[0,T]$ and
\begin{equation}
\label{barY-eq-Jbaru}
J(\bar{u}(\cdot))=\bar{Y}(0).
\end{equation}

Now we prove the optimality of $\bar{u}(\cdot)$. For any $u(\cdot) \in \mathcal{U}_{LQ}[0,T]$, if $(Y(\cdot),Z(\cdot)) \in L_{\mathbb{F}}^{2}(\Omega;C([0,T],\mathbb{R}))\times L_{\mathbb{F}}^{2}([0,T];\mathbb{R}^{d})$ is the minimal solution to the BSDE in (\ref{state-eq-yr}) corresponding to $u(\cdot)$, then the convexity of the quadratic function leads to
\[%
\begin{array}
[c]{rl}
& Y(0)-\bar{Y}(0)\\
\geq & \bar{X}^{\intercal}(T)H\left[  X(T)-\bar{X}(T)\right]  +\int_{0}%
^{T}\left\{  2\left[  Z(s)-\bar{Z}(s)\right]  ^{\intercal}\Gamma\bar
{Z}(s)\right.  \\
& +\left.  \bar{X}^{\intercal}(s)M(s)\left[  X(s)-\bar{X}(s)\right]  +\bar
{u}^{\intercal}(s)N(s)\left[  u(s)-\bar{u}(s)\right]  \right\}  ds\\
& -\int_{0}^{T}\left[  Z(s)-\bar{Z}(s)\right]  ^{\intercal}dW(s)
\end{array}
\]
and then applying It\^{o}'s formula to $X^{\intercal}(t)P(t)(X(t)-\bar{X}(t))$ over $[0,T]$ yields
\begin{equation}%
\begin{array}
[c]{rl}
& Y(0)-\bar{Y}(0)\\
\geq & \int_{0}^{T}\left\{  2\left[  Z(s)-\bar{Z}(s)-\Sigma^{\intercal
}(s)P(s)\left(  X(s)-\bar{X}(s)\right)  \right]  ^{\intercal}\Gamma\bar
{Z}(s)\right.  \\
& +\left.  \left[  \bar{X}^{\intercal}(s)P(s)B(s)+\bar{u}^{\intercal
}(s)N(s)\right]  \left[  u(s)-\bar{u}(s)\right]  \right\}  ds\\
& -\int_{0}^{T}\left[  Z(s)-\bar{Z}(s)-\Sigma^{\intercal}(s)P(s)\left(
X(s)-\bar{X}(s)\right)  \right]  ^{\intercal}dW(s).
\end{array}
\label{Ju-Jbaru-difference}%
\end{equation}
Noting
$$
\bar{X}^{\intercal}(s)P(s)B(s)+\bar{u}^{\intercal}(s)N(s) = 0,
$$
by Girsanov's theorem, (\ref{Ju-Jbaru-difference}) implies that
\begin{equation}
Y(0)-\bar{Y}(0)\geq-\int_{0}^{T}\left[  Z(s)-\bar{Z}(s)-\Sigma^{\intercal
}(s)P(s)\left(  X(s)-\bar{X}(s)\right)  \right]  ^{\intercal}d\bar
{W}(s),\label{defference-geq-local-martingale}%
\end{equation}
It will be proved later that the right-hand side of inequality (\ref{defference-geq-local-martingale}) is a true martingale. Taking $\mathbb{E}_{\bar{\mathbb{P}}} \left[ \cdot \right]$ in both sides of (\ref{defference-geq-local-martingale}), we have $Y(0)-\bar{Y}(0)\geq 0$, which means that
\begin{equation}
\label{LQ-optimality-claim-ineq}
J(u(\cdot))\geq J(\bar{u}(\cdot))
\end{equation}
due to (\ref{obje-eq-yr}) and (\ref{barY-eq-Jbaru}). The optimality of $\bar{u}(\cdot)$ follows from (\ref{LQ-optimality-claim-ineq}) and the arbitrariness of $u(\cdot)$ chosen from $\mathcal{U}_{LQ}[0,T]$.
Combining (\ref{barY-barZ-barX-relation}) and (\ref{barY-eq-Jbaru}) results in the optimal value
\begin{equation}
\label{barY-equals-Jbaru}
J(\bar{u}(\cdot))=\bar{Y}(0)=\frac{1}{2}x_{0}^{\intercal}P(0)x_{0}+\frac{1}{2} \int_{0}^{T} \mathrm{tr}\left\{  P(s)\left(  \Sigma\Sigma^{\intercal}\right)  (s)\right\}  ds.
\end{equation}

It remains to prove that the right-hand side of (\ref{difference-barYp-barY}) (resp. (\ref{defference-geq-local-martingale})) is true martingale under $\bar{\mathbb{P}}$ so we can take $\mathbb{E}_{\bar{\mathbb{P}}} \left[ \cdot \mid \mathcal{F}_{t}\right]$ (resp. $\mathbb{E}_{\bar{\mathbb{P}}} \left[ \cdot \right]$) to eliminate the stochastic integral. Since we have
\[
\mathbb{E}_{\mathbb{\bar{P}}}\left[  \left(\int_{0}^{T}\left\vert \bar{Z}(s) \right\vert ^{2}ds \right)^{\frac{p}{2}} \right] < +\infty, \ \forall p>1
\]
due to the fact that $\bar{X}(\cdot)$ is also Gaussian under $\mathbb{\bar{P}}$ and $\bar{Z}(s)=\Sigma^{\intercal}P(s)\bar{X}(s)$, it comes down to proving that if $(Y(\cdot),Z(\cdot)) \in L_{\mathbb{F}}^{2}(\Omega;C([0,T],\mathbb{R}))\times L_{\mathbb{F}}^{2}([0,T];\mathbb{R}^{d})$ is a solution of the BSDE in (\ref{state-eq-yr}) corresponding to a given $u(\cdot) \in L_{\mathbb{F}}^{2}([0,T];\mathbb{R}^{k})$, then $\{\int_{0}^{t} Z(s) dW(s), t \in[0,T]\}$ is a true martingale under $\bar{\mathbb{P}}$. To this end, for $m \geq 1$, let $\tau_{m}$ be the following stopping time
\[
\tau_{m}=\inf \left\{ t \geq 0: \int_{0}^{t} \left\vert Z(s) \right\vert^{2} \geq m \right\} \wedge T.
\]
As $Y(t) \geq 0$, $t \in [0,T]$, $\mathbb{P}$-a.s. (of course $\bar{\mathbb{P}}$-a.s.), then for each $m$ we have
\[%
\begin{array}
[c]{rl}%
Y(0)\geq & Y(t\wedge\tau_{m})+\int_{0}^{t\wedge\tau_{m}}Z^{\intercal}(s)\Gamma
Z(s)ds-\int_{0}^{t\wedge\tau_{m}}Z^{\intercal}(s)dW(s)\\
\geq & \int_{0}^{t\wedge\tau_{m}}\left[  Z^{\intercal
}(s)\Gamma Z(s)-2Z^{\intercal}(s)\Gamma\bar{Z}(s)\right]  ds-\int_{0}%
^{t\wedge\tau_{m}}Z^{\intercal}(s)d\bar{W}(s)\\
\geq & \int_{0}^{t\wedge\tau_{m}}\left[  \frac{\gamma_{\mathrm{min}}}{2}\left\vert
Z(s)\right\vert ^{2}-\frac{2\gamma_{\mathrm{max}}^{2}}{\gamma_{\mathrm{min}}}\left\vert \bar
{Z}(s)\right\vert ^{2}\right]  ds-\int_{0}^{t\wedge\tau_{m}}Z^{\intercal
}(s)d\bar{W}(s),
\end{array}
\]
which implies
\[
\begin{array}
[c]{rl}
\int_{0}^{\tau_{m}}\left\vert Z(s)\right\vert ^{2}ds\leq & \frac{2}{\gamma_{\mathrm{min}}}Y(0)+4\left(  \frac{\gamma_{\mathrm{max}}}{\gamma_{\mathrm{min}}}\right)  ^{2}\int_{0}^{T}\left\vert
\bar{Z}(s)\right\vert ^{2}ds\\
&+\frac{2}{\gamma_{\mathrm{min}}} \cdot \sup\limits_{t\in [0,T]}\left\vert \int%
_{0}^{t\wedge\tau_{m}}Z^{\intercal}(s)d\bar{W}(s)\right\vert .
\end{array}
\]
Taking $\mathbb{E}_{\bar{\mathbb{P}}} \left[ \cdot \right]$ in both sides of the last inequality and applying the Burkholder-Davis-Gundy inequality yield
\[
\begin{array}
[c]{rl}
\mathbb{E}_{\mathbb{\bar{P}}}\left[  \int_{0}^{\tau_{m}}\left\vert
Z(s)\right\vert ^{2}ds\right]  \leq &\frac{2}{\gamma_{\mathrm{min}}}Y(0)+4\left(  \frac
{\gamma_{\mathrm{max}}}{\gamma_{\mathrm{min}}}\right)  ^{2}\mathbb{E}_{\mathbb{\bar{P}}}\left[
\int_{0}^{T}\left\vert \bar{Z}(s)\right\vert ^{2}ds\right]  \\
&+\frac{6}{\gamma_{\mathrm{min}}}\mathbb{E}%
_{\mathbb{\bar{P}}}\left[  \left(  \int_{0}^{\tau_{m}}\left\vert
Z(s)\right\vert ^{2}ds\right)  ^{\frac{1}{2}}\right]  .
\end{array}
\]
Consequently, putting $a=\left(  \int_{0}^{\tau_{m}}\left\vert
Z(s)\right\vert ^{2}ds\right)  ^{\frac{1}{2}}$ and from the fundamental inequality $a \leq \frac{1}{2} (\frac{\gamma_{\mathrm{min}}}{6}a^{2}  +\frac{6}{\gamma_{\mathrm{min}}})$, we obtain
\[
\mathbb{E}_{\mathbb{\bar{P}}}\left[  \int_{0}^{\tau_{m}}\left\vert
Z(s)\right\vert ^{2}ds\right]  \leq\frac{4}{\gamma_{\mathrm{min}}}Y(0)+8\left(  \frac
{\gamma_{\mathrm{max}}}{\gamma_{\mathrm{min}}}\right)  ^{2}\mathbb{E}_{\mathbb{\bar{P}}}\left[
\int_{0}^{T}\left\vert \bar{Z}(s)\right\vert ^{2}ds\right]  +\frac{36}{\gamma_{\mathrm{min}}^{2}}<+\infty,
\]
which yields $\mathbb{E}_{\mathbb{\bar{P}}}\left[  \int_{0}^{T}\left\vert
Z(s)\right\vert ^{2}ds\right] < +\infty$ immediately from Fatou's lemma. Thus the stochastic integrals in the right sides of (\ref{difference-barYp-barY}) and (\ref{defference-geq-local-martingale}) are both true martingales under $\mathbb{\bar{P}}$. The proof is complete.
\end{proof}
	
As the end of this section, we emphasize that the admissible control set $\mathcal{U}_{LQ}[0,T]$ and the cost functional (\ref{obje-eq-yr}) provide a natural perspective to tackle with the LQ risk-sensitive control problem with identical risk-sensitive attitudes towards different risk sources. To illustrate this, putting $\Gamma=\frac{\theta}{2}\mathrm{I}_{d\times d}$ for some $\theta>0$, we find that a process $u(\cdot)\in L_{\mathbb{F}}^{2}([0,T];\mathbb{R}^{k})$ belongs to $\mathcal{U}_{LQ}[0,T]$ if and only if
\begin{equation}
\label{sym-LQ-admissible-set-condition}
\mathbb{E}\left[  \exp\left\{  \theta \left( \frac{1}{2} X^{\intercal
}(T)HX(T)+\frac{1}{2} \int_{0}^{T}g(t)dt\right)  \right\}  \right]  <+\infty,
\end{equation}
where $g(t)=X^{\intercal}(t)M(t)X(t)+u^{\intercal}(t)N(t)u(t),t\in [0,T]$. Actually, on the one hand, benefiting from the proof of Theorem 3.1 in \cite{Briand-Lepeltier-Martin} and the $L^{1}$-martingale representation theorem (please refer to Theorem 2.46 in \cite{Pardoux-book}), (\ref{sym-LQ-admissible-set-condition}) is sufficient to construct a solution $(Y(\cdot),Z(\cdot)) \in L_{\mathbb{F}}^{2}(\Omega;C([0,T],\mathbb{R}))\times L_{\mathbb{F}}^{2}([0,T];\mathbb{R}^{d})$ of the BSDE in (\ref{state-eq-yr}) such that
\begin{equation}
\label{sym-LQ-solution-risk-sensitive}
e^{\theta Y(0)} = \mathbb{E}\left[  \exp\left\{  \theta \left( \frac{1}{2} X^{\intercal
}(T)HX(T)+\frac{1}{2} \int_{0}^{T}g(t)dt\right)  \right\}  \right].
\end{equation}
On the other hand, according to Theorem 3.1 in \cite{Briand-Lepeltier-Martin}, (\ref{sym-LQ-admissible-set-condition}) is also necessary to guarantee the BSDE in (\ref{state-eq-yr}) admits at least one solution in $L_{\mathbb{F}}^{2}(\Omega;C([0,T],\mathbb{R}))\times L_{\mathbb{F}}^{2}([0,T];\mathbb{R}^{d})$ since
\begin{equation}
\label{sym-LQ-solution-dominate}
\mathbb{E}\left[  \exp\left\{  \theta \left( \frac{1}{2} X^{\intercal
}(T)HX(T)+\frac{1}{2} \int_{0}^{T}g(t)dt\right)  \right\}  \right] \leq e^{\theta Y^{\prime}(0)} <+\infty
\end{equation}
for any solution $(Y^{\prime}(\cdot),Z^{\prime}(\cdot)) \in L_{\mathbb{F}}^{2}(\Omega;C([0,T],\mathbb{R}))\times L_{\mathbb{F}}^{2}([0,T];\mathbb{R}^{d})$. It follows from (\ref{sym-LQ-solution-risk-sensitive}) and (\ref{sym-LQ-solution-dominate}) that (\ref{obje-eq-yr}) can be expressed by
\[
J(u(\cdot))=Y(0)=\frac{1}{\theta} \log \mathbb{E}\left[  \exp\left\{  \theta \left( \frac{1}{2} X^{\intercal
}(T)HX(T)+\frac{1}{2} \int_{0}^{T}g(t)dt\right)  \right\}  \right],
\]
which is nothing but the cost functional of the LQ risk-sensitive control with identical risk-sensitive attitudes towards different risk sources studied by Lim and Zhou \cite{LimZhou05}, and Duncan \cite{Duncan}. Resorting to Theorem \ref{thm-lq-optimal}, we can obtain the same feedback control. Thus (\ref{sym-LQ-admissible-set-condition}) completely characterizes $\mathcal{U}_{LQ}[0,T]$.

\section{An application to dynamic portfolio optimization}
As an application of risk-sensitive control in mathematical finance, the continuous time portfolio optimization problems with identical risk-sensitive attitudes towards different risk sources are well studied. Let $d=m+n$ with two positive integer $m,n$ and let $a\in\mathbb{R}^{m}$, $b\in\mathbb{R}^{n}$, and $A$, $B$, $\Lambda$, $\Sigma$ be respectively
$m\times n$, $n\times n$, $n\times d$, $m\times d$ constant matrices, and $r(t)$
be a nonnegative, deterministic function of $t$. If $\Gamma=\frac{\theta}{4}\mathrm{I}_{d\times d}$ for some $\theta>0$, then (\ref{state-eqy0})-(\ref{obje-eq-y0}) is closely related
to the portfolio optimization problem studied in \cite{Kuroda-Nagai} that the objective is to maximize the risk-sensitized expected growth rate up to time horizon $T$:
\begin{equation}
\label{portfolio-criterion}
I(u(\cdot)):=-\frac{2}{\theta}\log \mathbb{E}\left[ \exp\left\{ -\frac{\theta}{2} \log V(T) \right\} \right],
\end{equation}
where $V(\cdot)$ represents the investor's wealth process that is described by
\begin{equation}
\label{wealth-process-dynamic}
\frac{dV(t)}{V(t)}=r(t)dt + u^{\intercal}(t)\left(a + A\tilde{X}(t)-r(t) \mathbf{1}\right) dt + u^{\intercal}(t) \Sigma dW(t)
\end{equation}
with $\mathbf{1}:= (\overbrace{1,\ldots,1}^{n})$. Here the factor process $\tilde{X}$ satisfies the SDE
\begin{equation}
\left\{
\begin{array}
[c]{rl}%
d\tilde{X}(t)= & (b+B\tilde{X}(t))dt+\Lambda dW(t),\\
\tilde{X}(0)= & x_{0},
\end{array}
\right.
\label{eq-factor-process}
\end{equation}
which is interpreted as an exogenous macroeconomic, microeconomic or statistical process driving asset returns. %Under the assumption that for any $u(\cdot) \in \mathcal{U}[0,T]$
%\[
%\mathbb{E}\left[ -\theta\int_{0}^{T} u^{\intercal}(s) \Sigma dW(s) -\frac{\theta^{2}}{2} \int_{0}^{T} u^{\intercal}(s) \Sigma\Sigma^{\intercal} u(s) \right]=1,
%\
Then for any $u(\cdot) \in \mathcal{U}[0,T]$ we have $I(u(\cdot))=-J(u(\cdot))$ if we put
\begin{equation}%
\begin{array}
[c]{rl}%
b(t,x,u)= & \left(
\begin{array}
[c]{cc}%
0 & \mathbf{0}_{1\times n}\\
\mathbf{0}_{n\times1} & B
\end{array}
\right)  \left(
\begin{array}
[c]{c}%
x_{1}\\
x_{2}%
\end{array}
\right)  +\left(
\begin{array}
[c]{c}%
0\\
b
\end{array}
\right)  ,\\
\sigma(t,x,u)= & \left(
\begin{array}
[c]{c}%
-u^{\intercal}\Sigma\\
\Lambda
\end{array}
\right)  ,\text{ \ }\Phi(x)=x_{1},\\
f(t,x,u)= & \frac{1}{2}u^{\intercal}\Sigma\Sigma^{\intercal}u-u^{\intercal
}(a+Ax_{2}-r(t)\mathbf{1})-r(t)
\end{array}
\label{prelim-log-coeff}%
\end{equation}
in (\ref{state-eqy0}) with $x = (x_{1},x_{2})
\in\mathbb{R} \times\mathbb{R}^{n}$, where $\mathbf{0}_{n\times1}^{\intercal}
= \mathbf{0}_{1\times n} = (\overbrace{0,\ldots,0}^{n})$. Therefore maximizing $I(\cdot)$ over $\mathcal{U}[0,T]$ is equivalent to minimizing $J(\cdot)$ over $\mathcal{U}[0,T]$, that is (\ref{obje-eq-y0}).

We are motivated to take the coefficients in (\ref{state-eqy0}) that satisfy (\ref{prelim-log-coeff}) into account, where an asymmetric risk-sensitive portfolio optimization problem arises due to the $\Gamma$ may not be a scalar matrix.
%since it degenerates into the classical one studied in \cite{Kuroda-Nagai} on a finite time horizon $T$ when $\Gamma= \frac{\theta}{2}\mathrm{I}_{d \times d}$ with $d=m+n$.
For simplicity of writing, we write the factor process $\tilde{X}$ determined by (\ref{eq-factor-process}) as $X$ without causing any ambiguity. By the standard theory, it is easy to show that (\ref{eq-factor-process}) admits a unique solution $X(\cdot) \in \bigcap_{p>1} L_{\mathbb{F}}^{p}(\Omega;C([0,T],\mathbb{R}^{n}))$ since it is Gaussian. It follows from (\ref{prelim-log-coeff}) that the controlled BSDE in (\ref{state-eqy0}) can be rewritten as
\begin{equation}
\left\{
\begin{array}
[c]{rl}%
dY(t)= & -\left[  (Z^{\intercal}(t)-u^{\intercal}%
(t)\Sigma)\Gamma(Z(t)-\Sigma^{\intercal}u(t))\right. \\
& +\left.  \frac{1}{2}u^{\intercal}(t)\Sigma\Sigma^{\intercal}%
u(t)-u^{\intercal}(t)(a+AX(t)-r(t)\mathbf{1})-r(t)\right]  dt,\\
& +Z^{\intercal}(t)dW(t),\\
Y(T)= & 0,
\end{array}
\right.
\label{log-BSDE-new}%
\end{equation}
where $\Gamma$ is strictly positive definite. The following assumption is necessary.

\begin{assumption}
\label{assum-Sigma-posit-def} The matrix $\Sigma\Sigma^{\intercal}$ is strictly positive definite. %Denote by the positive number $\sigma_{\mathrm{max}}$ the maximal eigenvalues of $\Sigma\Sigma^{\intercal}$.
\end{assumption}

\begin{remark}
Assumption \ref{assum-Sigma-posit-def} implies that one cannot replicate the
risk structure of one of the $m$ assets by setting up a portfolio of the other
$m-1$ assets. As a result there is no risk-induced arbitrage opportunity on
the market.
\end{remark}

%Recalling that in the case of identical risk-sensitive attitudes towards different risk sources, (\ref{eq-factor-process}) and (\ref{log-BSDE-new}) is called the linear Gaussian model and the value function of the problem can be expressed explicitly as the quadratic functions of $x_{0}$ whose coefficients are determined as the solutions of the matrix-valued Riccati differential equations and linear differential equations.

Noting that (\ref{eq-factor-process}) is linear and Gaussian, we are inspired by the heuristic derivation adopted in the subsection 3.3 
to determine the optimal investment strategy.
Let $\Pi(\cdot)\in C([0,T];\mathbb{S}^{n \times n})$ be the unique solution to the Riccati differential equation
\begin{equation}
\left\{
\begin{array}
[c]{rl}%
d\Pi(t)= & -\left[  (B^{\intercal}-A^{\intercal}\Theta^{-1}\Xi)\Pi
(t)-\Pi(t)(B-\Xi^{\intercal}\Theta^{-1}A)\right.  \\
& +\left.  \Pi(t)(\Psi-\Xi^{\intercal}\Theta^{-1}\Xi)\Pi(t)-A^{\intercal
}\Theta^{-1}A\right]  dt,\text{ \ }t\in\lbrack0,T],\\
\Pi(T)= & 0,
\end{array}
\right.  \label{eq-Pi-ode}%
\end{equation}
and let $\varphi(\cdot) \in C([0,T];\mathbb{R}^{n})$ be the unique solution to the
linear ordinary differential equation
\begin{equation}
\left\{
\begin{array}
[c]{rl}%
d\varphi(t)= & -\left\{  [B^{\intercal}-\Pi(t)(\Psi-\Xi^{\intercal}\Theta
^{-1}\Xi)-A^{\intercal}\Theta^{-1}\Xi]\varphi(t)\right.  \\
& +\left.  \Pi(t)[b-\Xi^{\intercal}\Theta^{-1}(a-r(t)\mathbf{1})]+A^{\intercal
}\Theta^{-1}(a-r(t)\mathbf{1})\right\}  dt,\\
\varphi(T)= & 0,
\end{array}
\right.  \label{eq-phi-ode}%
\end{equation}
where
\begin{equation}
\label{def-Theta-Xi-Psi}%
\begin{array}
[c]{ccc}%
\Theta=\Sigma(2\Gamma+\mathrm{I}_{d\times d})\Sigma^{\intercal}, & \Xi
=2\Sigma\Gamma\Lambda^{\intercal}, & \Psi=2\Lambda\Gamma\Lambda^{\intercal}.
\end{array}
\end{equation}
Obviously $\Theta\in\mathbb{S}^{n\times n}$ and $\Theta\geq\Sigma\Sigma^{\intercal} >0$ so it is invertible.

\begin{lemma}
	\label{lem-Riccati-Pi-wellpose} If $\Psi-\Xi^{\intercal}\Theta^{-1}\Xi> 0$,
	then (\ref{eq-Pi-ode}) admits a unique solution $\Pi(t) \geq0, t \in[0,T]$ and
$\left\Vert \Pi\right\Vert _{\infty} \leq B_{\Pi}$, where $B_{\Pi
	}:=\exp\left\{  2\left(  \left\vert B \right\vert + \left\vert \Xi\right\vert
	\left\vert \Theta^{-1} \right\vert \left\vert A \right\vert \right) T \right\}
	\left\vert A \right\Vert ^{2}\left\vert \Theta^{-1} \right\vert T$.
\end{lemma}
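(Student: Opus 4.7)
The plan is to mirror the three-step argument used in the proof of Lemma \ref{lem-Riccati-wellpose-comparison}, since equation (\ref{eq-Pi-ode}) has exactly the same structural form (terminal condition zero, symmetric quadratic coefficient in $\Pi$, and a positive semidefinite ``forcing'' term $A^{\intercal}\Theta^{-1}A$, which is well defined as $\Theta\geq\Sigma\Sigma^{\intercal}>0$). First I would invoke the classical Riccati theory (e.g., Wonham \cite{Wonham-1968}): the coefficient matrices $B-\Xi^{\intercal}\Theta^{-1}A$, $\Psi-\Xi^{\intercal}\Theta^{-1}\Xi$, and $A^{\intercal}\Theta^{-1}A$ are all constant (hence continuous in $t$), and the hypothesis $\Psi-\Xi^{\intercal}\Theta^{-1}\Xi>0$ ensures that the quadratic term carries the correct dissipative sign, so a unique global solution $\Pi(\cdot)\in C([0,T];\mathbb{S}^{n\times n})$ on the entire interval $[0,T]$ exists.

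Next I would prove nonnegativity. Writing (\ref{eq-Pi-ode}) as $\Pi^{(1)}(t) = -\mathcal{L}_t(\Pi(t)) + \Pi(t)\,M\,\Pi(t) + Q$ with $M := -(\Psi-\Xi^{\intercal}\Theta^{-1}\Xi) \leq 0$ and $Q := A^{\intercal}\Theta^{-1}A \geq 0$, and terminal value $\Pi(T)=0\geq 0$, Theorem 2.2 of \cite{Freiling-Jank} applied to the pair $(\Pi(\cdot),0)$ yields $\Pi(t)\geq 0$ for every $t\in[0,T]$, exactly as in the proof of Lemma \ref{lem-Riccati-wellpose-comparison}.

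Finally, for the norm bound I would introduce the auxiliary linear ODE obtained by dropping the quadratic term:
\begin{equation*}
\tilde{\Pi}^{(1)}(t) = -\bigl[(B^{\intercal}-A^{\intercal}\Theta^{-1}\Xi)\tilde{\Pi}(t) - \tilde{\Pi}(t)(B-\Xi^{\intercal}\Theta^{-1}A) + A^{\intercal}\Theta^{-1}A\bigr], \quad \tilde{\Pi}(T)=0.
\end{equation*}
Since $\Pi\geq 0$ and $\Psi-\Xi^{\intercal}\Theta^{-1}\Xi>0$ make the quadratic term $-\Pi(\Psi-\Xi^{\intercal}\Theta^{-1}\Xi)\Pi\leq 0$, the same comparison principle gives $0\leq\Pi(t)\leq\tilde{\Pi}(t)$. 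Integrating $\tilde{\Pi}$ backward from $T$ and taking the Frobenius norm, together with $|B^{\intercal}-A^{\intercal}\Theta^{-1}\Xi|=|B-\Xi^{\intercal}\Theta^{-1}A|\leq |B|+|\Xi||\Theta^{-1}||A|$ and $|A^{\intercal}\Theta^{-1}A|\leq |A|^{2}|\Theta^{-1}|$, gives
\begin{equation*}
|\tilde{\Pi}(t)| \leq |A|^{2}|\Theta^{-1}|(T-t) + 2(|B|+|\Xi||\Theta^{-1}||A|)\int_{t}^{T}|\tilde{\Pi}(s)|\,ds,
\end{equation*}
and a Gronwall argument then delivers $\|\Pi\|_{\infty}\leq\|\tilde{\Pi}\|_{\infty}\leq B_{\Pi}$.

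The only delicate point I expect is verifying the sign hypothesis for the comparison: one needs to check that $-(\Psi-\Xi^{\intercal}\Theta^{-1}\Xi)$ and the zero matrix satisfy the ordering required by Theorem 2.2 of \cite{Freiling-Jank}, and that the forcing term $A^{\intercal}\Theta^{-1}A$ is symmetric positive semidefinite (immediate from $\Theta>0$). Given these, both comparisons go through exactly as in Lemma \ref{lem-Riccati-wellpose-comparison}, and the rest is a mechanical Gronwall estimate.
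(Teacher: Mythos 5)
Your proof follows essentially the same route as the paper's: where you split the first step into a Wonham-type existence result plus a Freiling--Jank comparison for nonnegativity, the paper obtains existence, uniqueness and nonnegativity in one stroke by citing Theorem 7.5 of Yong--Zhou (using $\Pi(T)=0$ and $A^{\intercal}\Theta^{-1}A\geq 0$), and the norm bound is obtained in both cases exactly as in Lemma \ref{lem-Riccati-wellpose-comparison}, by comparison with the linear equation obtained by dropping the quadratic term followed by a Gronwall estimate. The sign verification you flag as the delicate point is glossed over in the paper as well, so nothing in your argument departs from, or falls short of, what is actually written there.
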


\begin{proof}
	Since $\Pi(T)= 0$ and $A^{\intercal}\Theta^{-1}A \geq 0$, according to Theorem 7.5 in \cite{Yong-Zhou}, (\ref{eq-Pi-ode}) admits a unique solution $\Pi(\cdot)\in C([0,T];\mathbb{S}^{n \times n})$ such that $\Pi(t) \geq 0; t \in [0,T]$. Similar to the proof of Lemma \ref{lem-Riccati-wellpose-comparison}, we can deduce $\left\Vert \Pi \right\vert_{\infty} \leq B_{\Pi}$.
\end{proof}

\begin{remark}
	When $\Gamma= \frac{\theta}{4}\mathrm{I}_{d \times d}$, then
	\[%
	\begin{array}
	[c]{rl}%
	\Psi-\Xi^{\intercal}\Theta^{-1}\Xi= & \frac{\theta}{2}\Lambda\left[  \mathrm{I}_{d\times
		d}-\frac{\theta}{\theta+2}\Sigma^{\intercal}\left(  \Sigma\Sigma^{\intercal
	}\right)  ^{-1}\Sigma\right]  \Lambda^{\intercal}\\
	= & \frac{\theta}{2}\Lambda\left[  \left(  \mathrm{I}_{d\times d}-\Sigma^{\intercal
	}\left(  \Sigma\Sigma^{\intercal}\right)  ^{-1}\Sigma\right)  +\frac{1}%
	{\theta+2}\Sigma^{\intercal}\left(  \Sigma\Sigma^{\intercal}\right)
	^{-1}\Sigma\right]  \Lambda^{\intercal}.
	\end{array}
	\]
	Note that $\Sigma^{\intercal}\left(  \Sigma\Sigma^{\intercal}\right)
	^{-1}\Sigma$ is the projection on the column space of $\Sigma$ and therefore
	$\mathrm{I}_{d\times d}-\Sigma^{\intercal}\left(  \Sigma\Sigma^{\intercal
	}\right)  ^{-1}\Sigma$ is an orthogonal projection. As $\theta>0$ the term
\[	
\frac{\theta}{2}\Lambda\left[  \mathrm{I}_{d\times d}-\frac{\theta}{\theta+2}%
	\Sigma^{\intercal}\left(  \Sigma\Sigma^{\intercal}\right)  ^{-1}\Sigma\right]
	\Lambda^{\intercal}>0,
\]
 so (\ref{eq-Pi-ode}) naturally admits a unique
	solution $\Pi(t) \geq0$ defined for all $t \in[0,T]$.
\end{remark}

\begin{definition}
\label{def-log-admiss-control}
An investment strategy $u(\cdot) \in L_{\mathbb{F}}^{2}([0,T];\mathbb{R}^{m})$ is called admissible if it satisfies the following conditions.
\begin{enumerate}
  \item[(i)] The BSDE in (\ref{log-BSDE-new}) admits a minimal solution $$(Y(\cdot),Z(\cdot))\in L_{\mathbb{F}}^{2}(\Omega;C([0,T],\mathbb{R}))\times L_{\mathbb{F}}^{2}([0,T];\mathbb{R}^{d}).$$

  \item[(ii)] For any solution $(Y^{\prime}(\cdot),Z^{\prime}(\cdot))\in L_{\mathbb{F}}^{2}(\Omega;C([0,T],\mathbb{R}))\times L_{\mathbb{F}}^{2}([0,T];\mathbb{R}^{d})$ of the BSDE in (\ref{log-BSDE-new}),
      $$
      \mathbb{E}_{\bar{\mathbb{P}}}\left[ \left(\int_{0}^{T} \left\vert Z^{\prime}(s) \right\vert^{2} ds\right)^{\frac{1}{2}}\right] < +\infty,
      $$
      where $\mathbb{E}_{\bar{\mathbb{P}}}\left[ \cdot \right]$ is the mathematical expectation corresponding to the reference probability $\bar{\mathbb{P}}$ defined by
      \begin{equation}%
\begin{array}
[c]{rl}%
d\mathbb{\bar{P}}:= & \exp\left\{  -2\int_{0}^{T}\chi^{\intercal
}(s)dW(s)-2\int_{0}^{T}\left\vert \chi(s)\right\vert ^{2}ds\right\}
d\mathbb{P},
\end{array}
\label{log-Radon-Nikodym}%
\end{equation}
where $\Pi(\cdot)$, $\varphi(\cdot)$ is the unique solution to (\ref{eq-Pi-ode}), (\ref{eq-phi-ode}) respectively, and
   \begin{equation*}%
\begin{array}[c]{rl}
\chi(s)=&\Gamma\left\{  \left[  \Lambda^{\intercal}\Pi(s)+\Sigma^{\intercal
}\Theta^{-1}\left(  A-\Xi\Pi(s)\right)  \right]  X(s)+\left(  \Lambda
^{\intercal}-\Sigma^{\intercal}\Theta^{-1}\Xi\right)  \varphi(s) \right.\\
&\left.+\Sigma
^{\intercal}\Theta^{-1}\left(  a-r(s)\mathbf{1}\right)  \right\}  .
  \end{array}
   \end{equation*}%

\end{enumerate}
\end{definition}
The set of all admissible strategies will be denoted by $\mathcal{U}_{PO}[0,T]$.

\begin{remark}
Since $X(\cdot)$ is Gaussian and the Girsanov exponential with Gaussian integrand is an exponential martingale, the stochastic exponential in (\ref{log-Radon-Nikodym}) is a Radon-Nikodym derivative.
\end{remark}

The cost functional is defined by
\begin{equation}
J(u(\cdot)):= Y(0), \ u(\cdot) \in \mathcal{U}_{PO}[0,T].
\label{obje-log-recursive}%
\end{equation}
The objective is to find $\bar{u}(\cdot) \in \mathcal{U}_{PO}[0,T]$ (if it ever exists) such that
\[
J(\bar{u}(\cdot))=\inf\limits_{u(\cdot)\in\ \mathcal{U}_{PO}[0,T]}J(u(\cdot)).
\]

\begin{theorem}
\label{thm-log-optimal}
Let Assumption \ref{assum-Sigma-posit-def} hold. Assume $\Pi(\cdot) \in C([0,T];\mathbb{S}^{n \times n})$ uniquely solves (\ref{eq-Pi-ode}) and $\varphi(\cdot) \in C([0,T];\mathbb{R}^{n})$ uniquely solves (\ref{eq-phi-ode}). Then the state feedback strategy
\begin{equation}
\label{log-feedback-control}
\bar{u}(t):=\Theta^{-1}[(A-\Xi\Pi(t))X(t)-\Xi\varphi(t)+(a-r(t)\mathbf{1})], \ \ t \in [0,T].
\end{equation}
belongs to $\mathcal{U}_{PO}[0,T]$ and is optimal for the problem (\ref{log-BSDE-new}), (\ref{obje-log-recursive}). The corresponding optimal value of the objective function is
\[
J(\bar{u}(\cdot))=-\frac{1}{2}x_{0}^{\intercal}\Pi
(0)x_{0}-\varphi^{\intercal} (0)x_{0}-\kappa(0),
\]
where the time dependent coefficient $\kappa(\cdot) \in C([0,T];\mathbb{R})$ is defined as $\kappa(t)=\int_{t}^{T}l(s)ds,$ $ t \in[0,T]$ with
\begin{equation}%
\begin{array}
[c]{rl}%
l(t)= & -\frac{1}{2}\left[  \mathrm{tr}\left\{  \Lambda\Lambda^{\intercal}%
\Pi(t)\right\}  +2r(t)+2b^{\intercal}\varphi(t)-\varphi^{\intercal}%
(t)(\Psi-\Xi^{\intercal}\Theta^{-1}\Xi)\varphi(t)\right. \\
& -\left.  2\varphi^{\intercal}(t)\Xi^{\intercal}\Theta^{-1}(a-r(t)\mathbf{1}%
)+(a-r(t)\mathbf{1})^{\intercal}\Theta^{-1}(a-r(t)\mathbf{1})\right]  .
\end{array}
\label{eq-kappa-ode}%
\end{equation}

\end{theorem}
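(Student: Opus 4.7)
The plan is to mirror the three-part strategy of Theorem~\ref{thm-lq-optimal}: construct an explicit minimal solution $(\bar Y(\cdot),\bar Z(\cdot))$ associated with $\bar u(\cdot)$, prove optimality by a completion-of-squares combined with a Girsanov change of measure under $\bar{\mathbb{P}}$, and finally justify the martingale identifications through the integrability built into Definition~\ref{def-log-admiss-control}. Motivated by the Gaussian/linear-quadratic structure, I would propose the ansatz
\[
\bar Y(t) = -\tfrac12 X^{\intercal}(t)\Pi(t)X(t) - \varphi^{\intercal}(t)X(t) - \kappa(t),\qquad \bar Z(t) = \Lambda^{\intercal}\bigl(\Pi(t)X(t)+\varphi(t)\bigr)-\Sigma^{\intercal}\bar u(t),
\]
and apply It\^o's formula, using (\ref{eq-factor-process}), (\ref{eq-Pi-ode}), (\ref{eq-phi-ode}) and (\ref{eq-kappa-ode}). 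After algebraic simplification, the drift should be exactly the generator in (\ref{log-BSDE-new}) evaluated at $(\bar u(\cdot),\bar Z(\cdot))$: the quadratic-in-$X$ terms cancel via (\ref{eq-Pi-ode}) (which already encodes a completion of squares in the variable $u$ against the quadratic term $(z-\Sigma^{\intercal}u)^{\intercal}\Gamma(z-\Sigma^{\intercal}u)+\tfrac12 u^{\intercal}\Sigma\Sigma^{\intercal}u$), the linear-in-$X$ terms cancel via (\ref{eq-phi-ode}), and the remaining constant-in-$X$ terms cancel by the definition (\ref{eq-kappa-ode}) of $\kappa$. Because $X(\cdot)\in\bigcap_{p>1}L_{\mathbb{F}}^{p}(\Omega;C([0,T];\mathbb{R}^n))$, the pair $(\bar Y(\cdot),\bar Z(\cdot))$ lies in $L_{\mathbb{F}}^{2}(\Omega;C([0,T];\mathbb{R}))\times L_{\mathbb{F}}^{2}([0,T];\mathbb{R}^d)$, and clearly $\bar Y(T)=0$.

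To establish minimality, I would fix another candidate $(\bar Y'(\cdot),\bar Z'(\cdot))$ in the same space corresponding to the same control $\bar u(\cdot)$. Subtracting the two BSDEs and exploiting the identity $a^{\intercal}\Gamma a - b^{\intercal}\Gamma b = (a-b)^{\intercal}\Gamma(a-b) + 2(a-b)^{\intercal}\Gamma b$ for $a=\bar Z'-\Sigma^{\intercal}\bar u$ and $b=\bar Z-\Sigma^{\intercal}\bar u$, the non-negative quadratic remainder can be dropped to obtain
\[
\bar Y'(t)-\bar Y(t)\ \geq\ \int_{t}^{T}2\bigl(\bar Z'(s)-\bar Z(s)\bigr)^{\intercal}\Gamma\bigl(\bar Z(s)-\Sigma^{\intercal}\bar u(s)\bigr)\,ds-\int_{t}^{T}\bigl(\bar Z'(s)-\bar Z(s)\bigr)^{\intercal}dW(s).
\]
Because $\bar Z(s)-\Sigma^{\intercal}\bar u(s)=\Lambda^{\intercal}(\Pi(s)X(s)+\varphi(s))$ is exactly the integrand $\chi(s)$ (after matching with (\ref{def-Theta-Xi-Psi}) and the identity $\Sigma^{\intercal}\Theta^{-1}(A-\Xi\Pi)X+\Lambda^{\intercal}\Pi X=\Gamma^{-1}\Lambda^{\intercal}\Pi X$ entering the definition of $\chi$), the Girsanov shift from $W$ to $\bar W$ under $\bar{\mathbb{P}}$ absorbs the drift. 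The Radon-Nikodym density in (\ref{log-Radon-Nikodym}) is well-defined since $\chi(\cdot)$ is Gaussian, invoking Skorokhod's criterion exactly as in Theorem~\ref{thm-lq-optimal}. Condition (ii) of Definition~\ref{def-log-admiss-control} then makes the rightmost stochastic integral a true $\bar{\mathbb{P}}$-martingale, and conditional expectation yields $\bar Y'(t)\geq\bar Y(t)$.

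For optimality I would repeat the argument with an arbitrary $u(\cdot)\in\mathcal{U}_{PO}[0,T]$. Convexity of the generator jointly in $(u,z)$, with gradient vanishing at $(\bar u,\bar Z)$ by construction of $\bar u$ via (\ref{log-feedback-control})---this is precisely the first-order condition $\Theta\bar u(t)=(A-\Xi\Pi(t))X(t)-\Xi\varphi(t)+a-r(t)\mathbf{1}$---delivers a linearization inequality for $Y(0)-\bar Y(0)$. Applying It\^o's formula to $X^{\intercal}(t)\Pi(t)(X(t)-\bar X(t))+\varphi^{\intercal}(t)(X(t)-\bar X(t))$, all deterministic terms cancel, leaving only a stochastic integral against $d\bar W$ on the right. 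Taking $\mathbb{E}_{\bar{\mathbb{P}}}$ on both sides then gives $J(u(\cdot))\geq J(\bar u(\cdot))$, and the explicit optimal value follows from evaluating $\bar Y(0)$ at the initial condition $X(0)=x_0$.

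The main obstacle will be the last step: verifying that the stochastic integral against $d\bar W$ on the right-hand side of the optimality inequality is a genuine $\bar{\mathbb{P}}$-martingale, not merely a local one. For the minimality step the integrability is built into (ii) of Definition~\ref{def-log-admiss-control}, but for the optimality step one must control $\mathbb{E}_{\bar{\mathbb{P}}}[\int_{0}^{T}|Z(s)|^{2}ds]$ for the minimal solution $(Y(\cdot),Z(\cdot))$ associated with an arbitrary admissible $u(\cdot)$. I would handle this by the same stopping-time/BDG argument used at the end of the proof of Theorem~\ref{thm-lq-optimal}: localize by $\tau_m=\inf\{t:\int_0^t|Z|^2\geq m\}\wedge T$, use the lower bound $Z^{\intercal}\Gamma Z-2Z^{\intercal}\Gamma\bar Z\geq\tfrac{\gamma_{\min}}{2}|Z|^2-\tfrac{2\gamma_{\max}^2}{\gamma_{\min}}|\bar Z|^2$, take $\mathbb{E}_{\bar{\mathbb{P}}}$ and pass to the limit via Fatou. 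The only additional wrinkle is that $Y(t)$ need not be non-negative here (since $Y(T)=0$ but the generator is sign-indefinite), which I would circumvent by using the pathwise uniform bound $\sup_t|Y(t)|$ provided by $L_{\mathbb{F}}^{2}(\Omega;C([0,T];\mathbb{R}))$-membership together with condition (ii) of the admissible set.
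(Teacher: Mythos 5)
Your overall architecture (explicit quadratic ansatz for $(\bar Y,\bar Z)$, minimality and optimality via convexity of the generator plus a Girsanov change of measure under $\bar{\mathbb{P}}$) is exactly the paper's, but two of your concrete steps are wrong. First, the ansatz for $\bar Z$. If $\bar Y(t)=-\tfrac12 X^{\intercal}(t)\Pi(t)X(t)-\varphi^{\intercal}(t)X(t)-\kappa(t)$ is a deterministic function of $(t,X(t))$ and $dX=(b+BX)\,dt+\Lambda\, dW$, then It\^o's formula forces the martingale part to be $-(X^{\intercal}\Pi+\varphi^{\intercal})\Lambda\, dW$, i.e. $\bar Z(t)=-\Lambda^{\intercal}(\Pi(t)X(t)+\varphi(t))$. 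Your expression $\Lambda^{\intercal}(\Pi X+\varphi)-\Sigma^{\intercal}\bar u$ cannot arise from this ansatz (there is no source for a $\Sigma^{\intercal}\bar u$ term) and carries the wrong sign on the $\Lambda^{\intercal}$ part. Consequently your identification of the Girsanov drift is off: $\bar Z-\Sigma^{\intercal}\bar u$ is not $\Lambda^{\intercal}(\Pi X+\varphi)$ but $-[\Lambda^{\intercal}\Pi+\Sigma^{\intercal}\Theta^{-1}(A-\Xi\Pi)]X-(\Lambda^{\intercal}-\Sigma^{\intercal}\Theta^{-1}\Xi)\varphi-\Sigma^{\intercal}\Theta^{-1}(a-r\mathbf{1})$, which matches the explicit $\chi(s)$ of Definition \ref{def-log-admiss-control} term by term (up to the factor $\Gamma$ and an overall sign) with no extra identities; the auxiliary ``identity'' $\Sigma^{\intercal}\Theta^{-1}(A-\Xi\Pi)X+\Lambda^{\intercal}\Pi X=\Gamma^{-1}\Lambda^{\intercal}\Pi X$ you invoke to force the match is simply false. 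The first-order condition $\Theta\bar u-2\Sigma\Gamma\bar Z-(a+AX-r\mathbf{1})=0$, which is what kills the $u$-linearization term in the optimality step, also only holds with the correct $\bar Z$ (via $2\Sigma\Gamma\bar Z=-\Xi(\Pi X+\varphi)$).

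Second, your plan for what you call the main obstacle rests on a misreading of the admissible set. Condition (ii) of Definition \ref{def-log-admiss-control} is imposed on \emph{every} admissible $u(\cdot)$, not only on $\bar u(\cdot)$: it requires $\mathbb{E}_{\bar{\mathbb{P}}}\bigl[(\int_0^T|Z'(s)|^2ds)^{1/2}\bigr]<+\infty$ for any solution of the BSDE associated with that $u(\cdot)$. Hence for arbitrary $u(\cdot)\in\mathcal{U}_{PO}[0,T]$ the integral $\int_0^{\cdot}(Z-\bar Z)^{\intercal}d\bar W$ is a true $\bar{\mathbb{P}}$-martingale directly by the Burkholder--Davis--Gundy inequality; no stopping-time/Fatou argument is needed, and this is precisely why $\mathcal{U}_{PO}$ is defined differently from $\mathcal{U}_{LQ}$. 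Your substitute argument would not close in any case: the localization in Theorem \ref{thm-lq-optimal} uses $Y\geq 0$, which fails here, and membership in $L_{\mathbb{F}}^2(\Omega;C([0,T];\mathbb{R}))$ under $\mathbb{P}$ does not by itself yield the bound on $\mathbb{E}_{\bar{\mathbb{P}}}[\sup_t|Y(t)|]$ you would need under the new measure. A smaller point: the factor process (\ref{eq-factor-process}) is uncontrolled, so $X\equiv\bar X$ for every admissible control and your proposed It\^o computation on $X^{\intercal}\Pi(X-\bar X)+\varphi^{\intercal}(X-\bar X)$ is vacuous; the cancellation in the optimality step comes solely from the first-order condition above.
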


\begin{proof}
We first show that $\bar{u}(\cdot) \in \mathcal{U}_{PO}[0,T]$. Due to (\ref{log-feedback-control}), it can be verified that $\bar{u}(\cdot) \in L_{\mathbb{F}}^{2}([0,T];\mathbb{R}^{m})$. Applying It\^{o}'s lemma to $\frac{1}{2}X^{\intercal}(t)\Pi(t)X(t)$, $\varphi^{\intercal}(t)X(t)$ respectively, and from (\ref{eq-kappa-ode}), we get
\begin{equation*}%
\begin{array}
[c]{rl}
& \frac{1}{2}X^{\intercal}(t)\Pi(t)X(t)+\varphi^{\intercal
}(t)X(t)+\kappa(t)\\
& =\int_{t}^{T}\left[  -(X^{\intercal}(s)\Pi(s)+\varphi^{\intercal
}(s))\Lambda\Gamma\Lambda^{\intercal}(\Pi(s)X(s)+\varphi
(s))+\frac{1}{2}\bar{u}^{\intercal}(s)\Theta\bar{u}(s)+r(s)\right]  ds\\
& \ \ -\int_{t}^{T}(X^{\intercal}(s)\Pi(s)+\varphi^{\intercal
}(s))\Lambda dW(s)\\
& =\int_{t}^{T}\left\{  -\left[  (X^{\intercal}(s)\Pi(s)+\varphi
^{\intercal}(s))\Lambda+\bar{u}^{\intercal}(s)\Sigma\right]  \Gamma\left[
\Lambda^{\intercal}(\Pi(s)X(s)+\varphi(s))+\Sigma^{\intercal}\bar
{u}(s)\right]  \right.  \\
& \ \ -\left.  \frac{1}{2}\bar{u}^{\intercal}(s)\Sigma\Sigma^{\intercal}%
\bar{u}(s)+\bar{u}^{\intercal}(s)(a+AX(s)-r(s)\mathbf{1}%
)+r(s)\right\}  ds\\
& \ \ -\int_{t}^{T}(X^{\intercal}(s)\Pi(s)+\varphi^{\intercal
}(s))\Lambda dW(s).
\end{array}
\end{equation*}
Therefore, when $u(\cdot)=\bar{u}(\cdot)$, the BSDE in (\ref{log-BSDE-new}) admits a solution
\begin{equation}%
\begin{array}
[c]{rl}%
\bar{Y}(t)= & -\frac{1}{2}X^{\intercal}(t)\Pi(t)X%
(t)-\varphi^{\intercal}(t)X(t)-\kappa(t),\\
\bar{Z}(t)= & -\Lambda^{\intercal}(\Pi(t)X(t)+\varphi(t))
\end{array}
\label{log-relation-optimal-solution}%
\end{equation}
in $L_{\mathbb{F}}^{2}(\Omega;C([0,T],\mathbb{R}))\times L_{\mathbb{F}}^{2}([0,T];\mathbb{R}^{d})$ as $X(\cdot) \in L_{\mathbb{F}}^{4}(\Omega;C([0,T];\mathbb{R}^{n}))$. Moreover, it can be checked that $X(\cdot)$ is also Gaussian under $\bar{\mathbb{P}}$ so we have $\mathbb{E}_{\bar{\mathbb{P}}}\left[ \left(\int_{0}^{T} \left\vert \bar{Z}(s) \right\vert ^{2} ds\right)^{\frac{1}{2}}\right] < +\infty$, which verifies (ii) in Definition \ref{def-log-admiss-control}. We prove $(\bar{Y}(\cdot),\bar{Z}(\cdot))$ is minimal. If $(\bar{Y}^{\prime}(\cdot),\bar{Z}^{\prime}(\cdot)) \in L_{\mathbb{F}}^{2}(\Omega;C([0,T],\mathbb{R}))\times L_{\mathbb{F}}^{2}([0,T];\mathbb{R}^{d})$ is another solution corresponding to $\bar{u}(\cdot)$, then we have for any $t \in [0,T]$,
\[%
\begin{array}
[c]{rl}%
\bar{Y}^{\prime}(t)-\bar{Y}(t)\geq & \int_{t}^{T}2\left[  \bar{Z}^{\prime
}(s)-\bar{Z}(s)\right]  ^{\intercal}\Gamma\left[  \bar{Z}(s)-\Sigma
^{\intercal}\bar{u}(s)\right]  ds\\
& -\int_{t}^{T}\left[  \bar{Z}^{\prime}(s)-\bar{Z}(s)\right]  ^{\intercal}dW(s).
\end{array}
\]
From (\ref{log-Radon-Nikodym}), (\ref{log-feedback-control}), and (\ref{log-relation-optimal-solution}), applying Girsanov's theorem yields that
\begin{equation}
\label{log-difference-barYp-barY}
\bar{Y}^{\prime}(t)-\bar{Y}(t)\geq -\int_{t}^{T}\left[  \bar{Z}^{\prime}(s)-\bar{Z}(s)\right]  ^{\intercal}d\bar{W}(s),
\end{equation}
where
\[
\bar{W}(t)=W(t)+2\int_{0}^{t}\Gamma\left[  \bar{Z}(s)-\Sigma^{\intercal}\bar{u}(s)\right]  ds
\]
is an $d$-dimensional Brownian motion under $\mathbb{\bar{P}}$. As $\mathbb{E}_{\bar{\mathbb{P}}}\left[ \left(\int_{0}^{T} \left\vert \bar{Z}^{\prime}(s) \right\vert ^{2} ds\right)^{\frac{1}{2}}\right] < +\infty$ due to (ii) in Definition \ref{def-log-admiss-control}, we deduce $\bar{Y}^{\prime}(t) \geq \bar{Y}(t)$, $t \in [0,T]$, $\bar{\mathbb{P}}$-a.s. (of course, $\mathbb{P}$-a.s.) by taking the conditional expectation $\mathbb{E}\left[ \cdot \mid \mathcal{F}_{t} \right]$ in both sides of (\ref{log-difference-barYp-barY}).
Hence $\bar{u}(\cdot) \in \mathcal{U}_{PO}[0,T]$ and we obtain
\begin{equation}
\label{log-barY-eq-Jbaru}
J(\bar{u}(\cdot)) = \bar{Y}(0).
\end{equation}

Now we prove the optimality of $\bar{u}(\cdot)$. Taking any admissible strategy $u(\cdot)\in\mathcal{U}_{PO}[0,T]$ and the corresponding minimal solution $(Y(\cdot),Z(\cdot)) \in L_{\mathbb{F}}^{2}(\Omega;C([0,T],\mathbb{R}))\times L_{\mathbb{F}}^{2}([0,T];\mathbb{R}^{d})$, we obtain
\begin{equation}%
\begin{array}
[c]{rl}
& Y(0)-\bar{Y}(0)\\
\geq & \int_{0}^{T}\left\{  2\left[  Z(s)-\bar{Z}(s)\right]  ^{\intercal
}\Gamma\left[  \bar{Z}(s)-\Sigma^{\intercal}\bar{u}(s)\right]  \right.  \\
& +\left.  \left[  u(s)-\bar{u}(s)\right]  ^{\intercal}\left[  \Theta\bar
{u}(s)-2\Sigma\Gamma\bar{Z}(s)-\left(  a+AX(s)-r(s)\mathbf{1}\right)
\right]  \right\}  ds\\
& -\int_{0}^{T}\left[  Z(s)-\bar{Z}(s)\right]  ^{\intercal}dW(s).
\end{array}
\label{eq-opti-difference-app}%
\end{equation}
From (\ref{log-feedback-control}) and (\ref{eq-opti-difference-app}), we get
\[
\Theta\bar{u}(s)-2\Sigma\Gamma\bar{Z}(s)-\left(  a+AX(s)-r(s)\mathbf{1}\right)=0
\]
and therefore
\begin{equation}
Y(0)-\bar{Y}(0)\geq-\int_{0}^{T}\left[  Z(s)-\bar
{Z}(s)\right]  ^{\intercal}d\bar{W}(s)
\label{deffer-geq-local-martingale}%
\end{equation}
by Girsanov's theorem. Since $\bar{u}(\cdot) \in \mathcal{U}_{PO}[0,T]$ and $\mathbb{E}_{\bar{\mathbb{P}}}\left[ \left(\int_{0}^{T} \left\vert Z(s) \right\vert^{2} ds\right)^{\frac{1}{2}}\right]  <+\infty$ according to (ii) in Definition \ref{def-log-admiss-control}, the right-hand side in (\ref{deffer-geq-local-martingale}) is a true martingale admitting a mean zero under $\mathbb{\bar{P}}$. Taking $\mathbb{E}_{\mathbb{\bar{P}}}\left[  \cdot \right]$ in both sides of (\ref{deffer-geq-local-martingale}), we have $Y(0)-\bar{Y}(0) \geq 0$ which means that
\begin{equation}
\label{log-barY-leq-Jbaru}
J(u(\cdot)) \geq J(\bar{u}(\cdot)).
\end{equation}
The optimality of $\bar{u}(\cdot)$ follows from (\ref{log-barY-leq-Jbaru}) and the arbitrariness of $u(\cdot)$ chosen from $\mathcal{U}_{PO}[0,T]$. Finally, combining the relationship (\ref{log-relation-optimal-solution}) with (\ref{log-barY-eq-Jbaru}), we deduce the optimal value
\[
J(\bar{u}(\cdot)) = \bar{Y}(0)=-\frac{1}{2}x^{\intercal}_{0}\Pi(0)x_{0}-\varphi^{\intercal}(0)x_{0}-\kappa(0),
\]
which accomplishes the proof.
\end{proof}

\begin{remark}
As the original problem is to maximize the expected growth rate $-J(\cdot)$
over $\mathcal{U}_{PO}[0,T]$, it follows from Theorem \ref{thm-log-optimal}
that the optimal growth rate is $\frac{1}{2}x_{0}^{\intercal}\Pi(0)x_{0}+\varphi
^{\intercal}(0)x_{0}+\kappa(0)$. Furthermore, when $\Gamma= \frac{\theta}{4}
\mathrm{I}_{d \times d}$ for some given $\theta>0$, this result degenerates
into the same one as Theorem 2.1 in \cite{Kuroda-Nagai} since it can
be verified that $\left(  \Pi(\cdot), \varphi(\cdot), \kappa(\cdot) \right) $
satisfy (2.16), (2.17), (2.18) respectively on pages 316-317 in
\cite{Kuroda-Nagai}.
\end{remark}

\section*{Funding}

The work of Mingshang Hu was partially supported by the National Natural Science Foundation of China  (Grant No. 12326603, 11671231).

The work of Shaolin Ji was partially supported by the National Key R\&D Program of China (Grant No. 2023YFA1008701) and the Key Project of the National Natural Science Foundation of China (Grant No. 12431017).

The work of Rundong Xu was partially supported by China Postdoctoral Science Foundation (Grant No.2024M760481) and Shanghai Postdoctoral Excellence Program (Grant No.2023201).

The work of Xiaole Xue was partially supported by National Natural Science Foundation of China (Grant No.12471420;12001316).

\end{document}